\newcommand{\Z}{\ensuremath{\mathbb{Z}}}
\newcommand{\R}{\ensuremath{\mathbb{R}}}
\newcommand{\C}{\ensuremath{\mathbb{C}}}
\newcommand{\A}{\ensuremath{\mathbb{A}}}
\newcommand{\Tr}{\ensuremath{\mathrm{tr}\,}}
\newcommand{\Resprod}{\ensuremath{{\prod}'}}
\newcommand{\dd}{\ensuremath{\,\mathrm{d}}}
\newcommand{\angles}[1]{\ensuremath{\langle #1 \rangle}}
\newcommand{\mes}{\ensuremath{\mathrm{mes}}}
\newcommand{\identity}{\ensuremath{\mathrm{id}}}
\newcommand{\Hom}{\ensuremath{\mathrm{Hom}}}
\newcommand{\rightiso}{\ensuremath{\stackrel{\sim}{\rightarrow}}}
\newcommand{\leftiso}{\ensuremath{\stackrel{\sim}{\leftarrow}}}
\newcommand{\Ker}{\ensuremath{\mathrm{Ker}\,}}
\newcommand{\Gm}{\ensuremath{\mathbb{G}_\mathrm{m}}}
\newcommand{\Supp}{\ensuremath{\mathrm{Supp}}}
\newcommand{\GL}{\ensuremath{\mathrm{GL}}}
\newcommand{\U}{\ensuremath{\mathrm{U}}}
\newcommand{\SL}{\ensuremath{\mathrm{SL}}}
\newcommand{\Sp}{\ensuremath{\mathrm{Sp}}}
\newcommand{\Mp}{\ensuremath{\widetilde{\mathrm{Sp}}}}
\newcommand{\bmu}{\ensuremath{\bbmu}}
\newcommand{\noyau}{\ensuremath{\boldsymbol{\varepsilon}}} % l'élément non trivial dans le noyau du revêtement métaplectique
\newcommand{\rev}{\ensuremath{\mathbf{p}}} % Le symbole pour revêtements
\newcommand{\asp}{\ensuremath{\dashrule[.7ex]{2 2 2 2}{.4}}} % Le symbole pour anti-spécifiques
\renewcommand{\Re}{\ensuremath{\mathrm{Re}\xspace}}
\renewcommand{\Im}{\ensuremath{\mathrm{Im}\xspace}}
\newtheorem{definition-fr}{D\'{e}finition}[section]{\bf}{\rm}
\newtheorem{proposition-fr}[definition-fr]{Proposition}{\bf}{\it}
\newtheorem{theoreme}[definition-fr]{Th\'{e}or\`{e}me}{\bf}{\it}
\newtheorem{corollaire}[definition-fr]{Corollaire}{\bf}{\it}
\newtheorem{lemme}[definition-fr]{Lemme}{\bf}{\it}
\newtheorem{remarque}[definition-fr]{Remarque}{\bf}{\rm}
\newtheorem{hypothese}[definition-fr]{Hypoth\`{e}se}{\bf}{\rm}
\title{La formule des traces pour les revêtements de groupes réductifs connexes. IV. \\ Distributions invariantes}
\author{Wen-Wei Li}
\date{}
\begin{document}

\maketitle

\begin{abstract}
  Nous établissons la formule des traces invariante à la Arthur pour les revêtements adéliques des groupes réductifs connexes sur un corps de nombres, sous l'hypothèse que le théorème de Paley-Wiener invariant soit vérifié pour tout sous-groupe de Lévi en les places archimédiennes réelles. Cette hypothèse est vérifiée pour les revêtements métaplectiques de $\GL(n)$ et ceux de $\Sp(2n)$ à deux feuillets, par exemple. La démonstration est basée sur les articles antérieurs et sur les idées d'Arthur. Nous donnons également des formes simples de la formule des traces lorsque la fonction test satisfait à certaines propriétés de cuspidalité.
\end{abstract}

%\renewcommand{\abstractname}{Abstract} % Résumé en anglais
%\begin{abstract}
%  We establish the invariant trace formula à la Arthur for the adélic covers of connected reductive groups over a number field, under the hypothesis that the trace Paley-Wiener theorem is verified for all Levi subgroups at the real archimedean places. For instance, this hypothesis can be verified for the metaplectic covers of $\GL(n)$, or the twofold metaplectic cover of $\Sp(2n)$. The proofs are based upon the preceding articles and Arthur's ideas. We also give simple trace formulae when the test function satisfies certain cuspidality properties.
%\end{abstract}

%\begin{flushleft} \small
%  MSC classification (2010): \textbf{11F72}, 11F70. \\
%  \textit{Keywords}: Arthur-Selberg trace formula, invariant trace formula, covering groups \\
%  \textit{Mots-clés}: formule des traces d'Arthur-Selberg, formule des traces invariante, revêtements de groupes
%\end{flushleft}

\tableofcontents

\section{Introduction}\label{sec:intro}
Cet article fait partie d'un programme \cite{Li14a,Li12b,Li13} visant à établir la formule des traces invariante à la Arthur pour les revêtements des groupes réductifs connexes sur un corps de nombres $F$. Nous le complétons dans cet article sous l'hypothèse technique (l'Hypothèse \ref{hyp:PW}) que le Théorème de Paley-Wiener invariant soit satisfait en les places archimédiennes réelles.

Les revêtements interviennent dans des problèmes importants en arithmétique, par exemple les formes modulaires à poids demi-entiers \cite{Weil64}, la correspondance métaplectique \cite{KF86} de Flicker et Kazhdan et les séries de Dirichlet multiples \cite{BBCFH}, pour n'en citer que quelque uns. Au vu de la puissance de la formule des traces d'Arthur-Selberg pour les groupes réductifs connexes, une formule des traces pour les revêtements est très souhaitable.

Pour commencer, on note $\A$ l'anneau d'adèles de $F$ et on considère un $F$-groupe réductif connexe $G$. Grosso modo, un revêtement de $G(\A)$ à $m$ feuillets est une extension centrale de groupes topologiques localement compacts
\begin{align*}
  1 \to \bmu_m \to \tilde{G} \xrightarrow{\rev} G(\A) \to 1, \\
  \text{où} \quad \bmu_m := \{ z \in \C^\times : z^m=1 \}.
\end{align*}
Afin d'étudier les formes et représentations automorphes de $\tilde{G}$, il faut imposer d'autres conditions naturelles sur $\tilde{G}$, eg. l'existence d'une section $G(F) \to \tilde{G}$, ce que l'on fixe, et la commutativité de l'algèbre de Hecke sphérique en presque toute place non-archimédienne. On renvoie à \cite{Li14a} pour les détails. De plus, on peut se limiter à l'étude de représentations automorphes spécifiques de $\tilde{G}$, c'est-à-dire les représentations automorphes $\pi$ qui satisfont à $\pi(\noyau) = \noyau \cdot \identity$ pour tout $\noyau \in \bmu_m$. Pour cela, il suffit de considérer les fonctions test $f: \tilde{G} \to \C$ anti-spécifiques, c'est-à-dire celles satisfaisant à $f(\noyau \tilde{x})=\noyau^{-1}f(\tilde{x})$ pour tous $\noyau \in \bmu_m$, $\tilde{x} \in \tilde{G}$.

On note $C^\infty_{c,\asp}(\tilde{G}^1)$ l'espace des fonctions $C^\infty_c$ anti-spécifiques sur $\tilde{G}^1$. Comme dans le cas de groupes réductifs connexes, on a déjà la formule des traces dite grossière pour $\tilde{G}$, de la forme suivante
$$ \sum_{\chi \in \mathfrak{X}} J_\chi(\mathring{f}^1) = J(\mathring{f}^1) = \sum_{\mathfrak{o} \in \mathcal{O}}J_{\mathfrak{o}}(\mathring{f}^1), \quad \mathring{f}^1 \in C^\infty_c(\tilde{G}^1) $$
où $\tilde{G}^1 := \rev^{-1}(G(\A)^1)$, et $G(\A)^1$ est le groupe des éléments $x \in G(\A)$ tel que $|\chi(x)|=1$ pour tout $\chi \in \Hom_{F-\text{grp}}(G, \Gm)$. La forme linéaire $J$ est une trace convenablement régularisée de l'opérateur de convolution par $\mathring{f}^1$ agissant sur $L^2(G(F) \backslash \tilde{G}^1)$, et les $J_\chi$ (resp. $J_\mathfrak{o}$) sont des objets géométriques (resp. spectraux) indexés par les classes de conjugaison semi-simples dans $G(F)$ (resp. données cuspidales automorphes de $\tilde{G}$). Ces formes linéaires s'appellent souvent ``distributions'', bien que leur continuité est toujours un problème délicat. De plus, on peut raffiner les termes $J_\chi$, $J_\mathfrak{o}$; cela conduit aux développements fins géométrique et spectral établis dans \cite{Li14a} et \cite{Li13}, respectivement.

Or $J$ n'est pas une distribution invariante lorsque $G$ n'est pas anisotrope modulo le centre, par conséquent la formule des traces grossière n'est pas assez satisfaisante. L'idée d'Arthur est de la rendre invariante de la manière suivante.
\begin{enumerate}
  \item On déduit de $J$ une distribution sur $\tilde{G}_V := \rev^{-1}(G(F_V))$, notée toujours $J$, où $V$ est un ensemble fini de places de $F$ contenant les places ramifiées ou archimédiennes.
  \item Pour tout sous-groupe de Lévi $L$ de $G$, on choisit un espace convenable de fonctions test lisses anti-spécifiques $\mathcal{H}_{\tilde{L}}$ avec une application surjective $\mathcal{H}_{\tilde{L}} \to I\mathcal{H}_{\tilde{L}}$.
  \item Trouver une bonne application $\phi^1_{\tilde{L}}: \mathcal{H}_{\tilde{G}} \to I\mathcal{H}_{\tilde{L}}$, satisfaisant à des propriétés sous conjugaison qui sont analogues à celles de $J_\chi$, $J_\mathfrak{o}$, etc. Pour $L=G$, on doit retrouver $\mathcal{H}_{\tilde{G}} \to I\mathcal{H}_{\tilde{G}}$.
  \item Définir une distribution $I=I^{\tilde{G}}$ par récurrence de sorte que $I$ est invariante (cela résultera des propriétés de $\phi^1_{\tilde{L}}$),  $I$ se factorise par $I\mathcal{H}_{\tilde{G}}$, et on a la décomposition
    $$ J(f) = \sum_{L \in \mathcal{L}(M_0)} \frac{|W^L_0|}{|W^G_0|} I^{\tilde{L}}(\phi^1_{\tilde{L}}(f)), \quad f \in \mathcal{H}_{\tilde{G}}, $$
    où $\mathcal{L}(M_0)$ est l'ensemble des sous-groupes de Lévi contenant un sous-groupe de Lévi minimal $M_0$ choisi, et $W^L_0$ est le groupe de Weyl de $L$ par rapport à $M_0$.
  \item Les termes avec $L=G$ doivent s'exprimer en termes des objets qui nous intéressent, disons les caractères des représentations ou les intégrales orbitales invariantes. % Non forcément dans le spectre discret...
\end{enumerate}
En fait, on doit aussi appliquer une variante de cette procédure aux distributions dans les développements fins, à savoir les intégrales orbitales pondérées et les caractères pondérés. Une fois réussie, cette procédure donne des distributions locales invariantes $I_{\tilde{M}}(\cdots)$ par lesquelles $I$ s'exprime, pour tout $M \in \mathcal{L}(M_0)$.

Malgré les difficultés énormes, Arthur parvient à compléter ce programme dans \cite{Ar88-TF1,Ar88-TF2} d'une façon légèrement différente pour les groupes réductifs connexes. Il obtient ainsi la première formule des traces invariante générale. Dans un article plus récent \cite{Ar02}, il propose une autre façon d'obtenir une formule des traces invariante, visant à la stabilisation. Indiquons une différence substantielle entre ces deux approches. Les distributions locales spectrales dans \cite{Ar02} sont définies à l'aide de fonctions $\mu$ de Harish-Chandra, ce qu'Arthur appelle la normalisation canonique dans \cite{Ar98}, tandis que celles dans \cite{Ar88-TF1,Ar88-TF2} dépendent d'un choix de facteurs normalisants pour les opérateurs d'entrelacement locaux.

Revenons aux revêtements. Il s'avère que des modifications non-triviales et parfois assez techniques interviennent lorsque l'on envisage d'adapter les formules des traces invariante aux revêtements. Heureusement, la plupart des obstacles sont enlevés dans \cite{Li14a,Li12b,Li13}, où on a obtenu une formule des traces raffinée non-invariante; on renvoie aux introductions desdites références pour une discussion plus approfondie. Dans cet article, on rassemble les briques déjà obtenues et énonce une formule des traces invariante. Notre approche est modelée sur \cite{Ar02}. Il y a cependant quelques différences. Expliquons.

\begin{itemize}
  %\item Pour nous le Théorème de Paley-Wiener invariant est une hypothèse non-prouvée pour les revêtements généraux.
  %\item Rappelons qu'il y a un isomorphisme canonique de groupes topologiques
  %  $$ \tilde{G} = \tilde{G}^1 \times A_{G,\infty} $$
  %  où $A_{G,\infty}$ est un sous-groupe fermé central de $\tilde{G}_{V_\infty}$. Nos principaux espaces de fonctions test sont $\mathcal{H}_{\asp}(\tilde{G}, A_{G,\infty})$ et son analogue local $\mathcal{H}_{\asp}(\tilde{G}_V, A_{G,\infty})$, dont les définitions seront données plus loin. On aura aussi besoin de leurs généralisations associées à une donnée centrale $(A,\mathfrak{a})$, à discuter ci-dessous.
  \item Dans \cite{Ar02}, Arthur définit les données centrales de la forme $(Z,\zeta)$ où $Z$ est un tore central induit dans $G$ et $\zeta$ est un caractère automorphe de $Z(\A)$. Cette complication est due à l'endoscopie. Pour les revêtements, on ne voit pas encore le besoin de telles constructions. Par contre, notre version de donnée centrale sera une paire $(A,\mathfrak{a})$ où $A \subset A_{G,\infty}$ est un sous-groupe de Lie connexe, isomorphe à $\mathfrak{a} \subset \mathfrak{a}_G$ par l'application $H_{\tilde{G}}$ de Harish-Chandra. On s'intéresse finalement au cas $A=A_{G,\infty}$, ce qui correspond pour l'essentiel au choix $Z=\{1\}$ au sens de \cite{Ar02}. Or les autres données centrales sont aussi nécessaires pour la démonstration.
  \item Dans \cite[\S 5]{Ar88-TF2}, Arthur complète sa construction des distributions invariantes locales à l'aide d'un argument global. Dans notre cadre, l'argument global paraît problématique mais on peut l'achever de façon locale. Plus précisément, notre démonstration reposera sur une généralisation du ``Théorème 0'' de Kazhdan \cite[Corollaire 5.8.9]{Li12b}, un sous-produit de la formule des traces locale invariante pour les revêtements.
  \item Enfin, nous essayons de réconcilier les choix différents de fonctions test pour les formules des traces invariantes dans \cite{Ar88-TF1,Ar88-TF2} et \cite{Ar02}. La transition est formelle pourvu que le formalisme de données centrales et de leurs espaces de fonctions associés soit mis en place, ce que nous ferons soigneusement dans \S\ref{sec:PW}.
\end{itemize}

Donnons quelques remarques sur le Théorème de Paley-Wiener (l'Hypothèse \ref{hyp:PW}). Nous le mettons comme une hypothèse en les places réelles. Le cas des groupes réductifs connexes est résolu dans \cite{CD84,CD90} mais l'auteur ne sait pas comment traiter le cas de revêtements. Néanmoins, c'est possible de vérifier cette hypothèse dans certains cas (voir \S\ref{sec:PW-exemples}), par exemple le revêtement métaplectique de Weil de $\Sp(2n,\R)$ ou les revêtements de $\GL(n,\R)$. En particulier, notre résultat justifie les travaux de Mezo \cite{MZ01,MZ02} sur la correspondance métaplectique, ce qui présume la formula des traces invariante pour de tels revêtements.

Le choix de l'espace de fonctions test est une affaire formelle, cependant il mérite quelques remarques. Il y en a au moins quatre candidats. % A: est-ce la définition de l'espace de fonctions test?
\renewcommand{\labelenumi}{\Alph{enumi}.}\begin{enumerate}
  \item L'espace $\mathcal{H}_{\asp}(\tilde{G}^1_V)$ des fonctions dans $C^\infty_c(\tilde{G}^1_V)$  qui sont anti-spécifiques et $\tilde{K}_V$-finies sous la translation bilatérale par un sous-groupe compact maximal $\tilde{K}_V$ de $\tilde{G}_V$ en bonne position relativement à $M_0$. C'est le choix fait dans \cite{Ar88-TF1,Ar88-TF2}, et aussi dans \cite{Ar02} pour le cas $Z=\{1\}$. En fait, Arthur définit des distributions en choisissant des fonctions $f^\star \in \mathcal{H}_{\asp}(\tilde{G}_V)$ dont la restriction à $\tilde{G}^1_V$ est égale à une fonction donnée $f^1 \in \mathcal{H}_{\asp}(\tilde{G}^1_V)$, ensuite il construit des distributions en $f^\star$ et montre qu'elles ne dépendent que de $f^1$.
  \item L'espace $\mathcal{H}_{\asp}(\tilde{G}_V, A_{G,\infty})$ des fonctions dans $C^\infty_c(\tilde{G}_V/A_{G,\infty})$ qui sont anti-spécifiques et $\tilde{K}_V$-finies. Cet espace s'identifie visiblement à $\mathcal{H}_{\asp}(\tilde{G}^1_V)$ par restriction. Toutefois nous préférons le point de vue $A_{G,\infty}$-équivariant dans cet article car il se conforme mieux aux conventions adoptées dans le cas tordu \cite{Lab04}.
  \item L'espace $\mathcal{H}_{\asp}(\tilde{G}_V)$. Intégration le long de $A_{G,\infty}$ donne une application surjective $\mathcal{H}_{\asp}(\tilde{G}_V) \to \mathcal{H}_{\asp}(\tilde{G}_V, A_{G,\infty})$, notée $f \mapsto f^1$. Si l'on a défini la distribution invariante $I$ de $\mathcal{H}_{\asp}(\tilde{G}_V, A_{G,\infty})$, alors on peut en déduire une distribution de $\mathcal{H}_{\asp}(\tilde{G}_V)$ par la formule $I(f) = I(f^1)$. C'est la distribution que l'on obtient si l'on part de l'action sur $L^2(G(F) A_{G,\infty} \backslash \tilde{G})$ des éléments de $C^\infty_{c,\asp}(\tilde{G})$ au lieu de $C^\infty_{c,\asp}(\tilde{G}/A_{G,\infty})$. Cet espace est aussi utilisé dans \cite{Lab04} et c'est le plus commode à manipuler dans les démonstrations.
  \item Dans les démonstrations, on aura aussi besoin de passer à un espace $\mathcal{H}_{\text{ac},\asp}(\tilde{G}_V)$ de fonctions test anti-spécifiques ``à support presque compact'', abrégé par ``ac'' en omettant l'accent grave. Il contient tous les espaces ci-dessus.
\end{enumerate}\renewcommand{\labelenumi}{\arabic{enumi}.}

On obtient les variantes des espaces ci-dessus en remplaçant $A_{G,\infty}$ par $A$ qui fait partie d'une donnée centrale générale $(A,\mathfrak{a})$. Le cas $A=\{1\}$ donne $\mathcal{H}_{\asp}(\tilde{G}_V)$ et $\mathcal{H}_{\text{ac},\asp}(\tilde{G})$. En principe, toute propriété des espaces de fonctions test peut se réduire au cas $A=\{1\}$; en particulier, les propriétés de l'application cruciale $\phi^1_{\tilde{M}}$ se déduit de celles de son avatar $\phi_{\tilde{M}}$ associé à la donnée centrale $A=\{1\}$. Le Théorème de Paley-Wiener invariant intervient dans la description de l'espace $I\mathcal{H}_{\asp}(\tilde{G}_V)$.

La formule des traces invariante est donnée dans le Théorème \ref{prop:I}. C'est de la forme
\begin{multline*}
  \sum_{M \in \mathcal{L}^G(M_0)} \frac{|W^M_0|}{|W^G_0|} \sum_{\tilde{\gamma} \in \Gamma(\tilde{M}^1, V)} a^{\tilde{M}}(\tilde{\gamma}) I_{\tilde{M}}(\tilde{\gamma}, f^1) = I(f^1) \\
  = \sum_{t \geq 0} \sum_{M \in \mathcal{L}^G(M_0)} \frac{|W^M_0|}{|W^G_0|} \int_{\Pi_{t,-}(\tilde{M}^1,V)} a^{\tilde{M}}(\pi) I_{\tilde{M}}(\pi,0,f^1) \dd\pi ,
\end{multline*}
pour tout $f^1 \in \mathcal{H}_{\asp}(\tilde{G}_V, A_{G,\infty})$. Ici $\Gamma(\tilde{M}^1, V)$ est un ensemble de bonne classes de conjugaison dans $\tilde{M}^1_V$ (voir \cite[Définition 2.6.1]{Li14a}), et $\Pi_{t,-}(\tilde{M}^1,V)$ est un espace de représentations irréductibles unitaires spécifiques de $\tilde{M}^1_V$ dont le caractère central est trivial sur $A_{M,\infty}$, qui est muni d'une mesure; on renvoie à la Définition \ref{def:coeff-spec} et la Remarque \ref{rem:coeff-spec} pour les détails techniques sur cette mesure. Le côté géométrique est une somme absolument convergente tandis que le côté spectral est regardé comme une intégrale itérée. On renvoie à la Remarque \ref{rem:convergence-spec} pour une discussion sur le paramètre gênant $t$.

En pratique, on utilise souvent les formes simples de ladite formule des traces données dans le Théorème \ref{prop:formules-simples}, pourvu que la fonction test $f^1$ satisfasse à certaines conditions de cuspidalité (voir la Définition \ref{def:cuspidale}).

Enfin, indiquons que la formule des traces invariante est une étape intermédiaire mais nécessaire pour la formule des traces stable, si l'on est ambitieux d'envisager la stabilisation complète pour les revêtements. Il paraît que le cas où $\tilde{G}=\Mp(2n)$ est le revêtement métaplectique de Weil de $G=\Sp(2n)$ est abordable.

\paragraph{Organisation de cet article}
Dans \S\ref{sec:preliminaires}, on récapitule les conventions et notations essentielles. On définit aussi les principaux espaces de fonctions test qui suffisent pour énoncer la formule des traces invariante. Une étude plus approfondie de ces espaces fait l'objet de la section suivante.

Le \S\ref{sec:PW} est consacré à l'étude des espaces de fonctions test plus généraux ainsi que leur espace de Paley-Wiener. On énonce l'Hypothèse \ref{hyp:PW} et on établit la liaison entre les espaces associés à des données centrales différentes. Dans \S\ref{sec:PW-exemples}, on vérifie l'Hypothèse \ref{hyp:PW} dans certains cas spéciaux.

Dans \S\ref{sec:dists-locales}, on définit tout d'abord les intégrales orbitales pondérées et les caractères pondérés unitaires qui sont canoniquement normalisés. Malgré l'importance des caractères pondérés non-unitaires, ils n'apparaissent pas dans cet article. L'application mentionnée plus haut
$$ \phi^1_{\tilde{L}}: \mathcal{H}_{\asp}(\tilde{G}_V, A_{G,\infty}) \to I\mathcal{H}_{\asp}(\tilde{L}_V, A_{L,\infty}) $$
est définie par caractères pondérés tempérés. Observons que les espaces à l'indice ``ac'' n'apparaissent pas ici. Ces ingrédients permettront de démarrer la machine d'Arthur, et nous fournissent des distributions invariantes locales
$$ I_{\tilde{M}}(\cdots): I\mathcal{H}_{\asp}(\tilde{G}_V, A) \to \C$$
pour toute donnée centrale $(A,\mathfrak{a})$. On établit leurs existence et propriétés de façon purement locale.

Les deux sections \S\ref{sec:PW} et \S\ref{sec:dists-locales} concernent la théorie locale. En particulier, l'ensemble fini $V$ de places n'est sujet qu'à la condition d'adhérence (la Définition \ref{def:adherence}).

Dans \S\ref{sec:developpements}, on effectue la ``compression'' faite dans \cite[\S\S 2-3]{Ar02} pour les développements fins, afin d'absorber tout renseignement global aux coefficients $a^{\tilde{M}}(\cdots)$. Cette procédure introduit des intégrales orbitales pondérées non ramifiées $r^L_M(k)$ dans les coefficients géométriques, et des facteurs $r^L_M(c)$ provenant de facteurs normalisants non ramifiés dans le coefficients spectraux. On s'attend à ce que $r^L_M(c)$ s'exprime en termes de fonctions $L$ partielles et facteurs $\varepsilon$, mais cette interprétation n'est pas nécessaire pour cet article.

Dans \S\ref{sec:I}, les résultats des deux sections précédentes sont combinées et la formule des traces invariante (le Théorème \ref{prop:I}) en découle doucement. On donne également les variantes Théorème \ref{prop:I-1} et \ref{prop:I-2} pour différents usages de fonctions test. À la fin, on donne les formes simples de la formule des traces, en supposant que la fonction test est cuspidale en une ou deux places. C'est une paraphrase simple de \cite[\S 7]{Ar88-TF2}.

\paragraph{Remerciements}
L'auteur tient à remercier Jean-Loup Waldspurger pour sa lecture d'une version antérieure de ce texte. Il remercie également Hùng M\d{a}nh Bùi et Paul Mezo pour des discussions utiles.

\section{Préliminaires}\label{sec:preliminaires}
\subsection{Revêtements}\label{sec:rev}
Nous adoptons systématiquement le formalisme dans \cite{Li14a}. Pour la commodité du lecteur, rappelons brièvement les définitions concernant les revêtements.

Soient $m \in \Z_{\geq 1}$, $F$ un corps de nombres et $\A=\A_F$ son anneau d'adèles. Soit $G$ un $F$-groupe réductif connexe. Le centre de $G$ est noté par $Z_G$. On considère un revêtement adélique à $m$ feuillets de $G$ au sens de \cite{Li14a}, qui est une extension centrale de groupes localement compacts
\begin{gather}\label{eqn:ext-globale}
  1 \to \bmu_m \to \tilde{G} \xrightarrow{\rev} G(\A) \to 1,
\end{gather}
où $\bmu_m := \{z \in \C^\times : z^m=1 \}$.

À chaque ensemble fini $V$ de places de $F$, on note $F_V := \prod_{v \in V} F_v$ et on en déduit une extension centrale
$$ 1 \to \bmu_m \to \tilde{G}_V \xrightarrow{\rev_S} G(F_V) \to 1 $$
qui est la tirée-en-arrière de \eqref{eqn:ext-globale} par $G(F_V) \hookrightarrow G(\A)$. On l'appelle un revêtement local. Lorsque $V=\{v\}$, on écrit tout simplement $\tilde{G}_v \xrightarrow{\rev_v} G(F_v)$. De la même façon, on définit $\rev^V: \tilde{G}^V \to G(F^V)$ où $F^V := \Resprod_{v \notin S} F_v$.

Les éléments dans $\tilde{G}$ sont affectés d'un $\sim$, eg. $\tilde{x}$; leur image dans $G(\A)$ est notée $x$, etc.

Rappelons qu'un revêtement adélique est muni des structures suivantes \cite[\S 3.3]{Li14a}.
\begin{itemize}
  \item Un ensemble fini $V_\text{ram} = V_\text{ram}(\tilde{G})$ de places de $V$ contenant toutes les places archimédiennes.
  \item Un modèle lisse et connexe de $G$ sur l'anneau des $V_\text{ram}$-entiers dans $F$.
  \item Pour toute $v \notin V_\text{ram}$, on pose $K_v := G(\mathfrak{o}_v)$ et il existe un scindage $s_v: K_v \hookrightarrow \tilde{G}_v$, ce que l'on fixe. On suppose que $(\rev_v, K_v, s_v)$ est un revêtement non ramifié au sens de \cite[\S 3.1]{Li14a}. On exige que $(s_v)_{v \notin V_\text{ram}}: \prod_{v \notin V_\text{ram}} K_v \to \tilde{G}$ induise un homomorphisme continu $K^V \hookrightarrow \tilde{G}$.
  \item Il existe un scindage $i: G(F) \to \tilde{G}$ de $\rev$, ce que l'on fixe.
  \item Les éléments unipotents de $G(\A)$ se relèvent de façon canonique à $\tilde{G}$. Cela induit un scindage $U(\A) \to \tilde{G}$ pour tout radical unipotent d'un sous-groupe parabolique de $G$. Idem pour les revêtements locaux. Cf. \cite[\S 2.2]{Li14a}.
\end{itemize}

On choisit de plus un sous-groupe de Levi minimal $M_0$ de $G$, un sous-groupe compact maximal $K = \prod_v K_v$ de $G(\A)$ en bonne position relativement à $M_0$, et on suppose que pour tout $v \notin V_\text{ram}$, $K_v$ est le sous-groupe hyperspécial dans la donnée du revêtement non ramifié ci-dessus, ce qui est toujours possible quitte à agrandir $V_\text{ram}$. On pose $\tilde{K}_v := \rev^{-1}(K_v)$, $\tilde{K} := \rev^{-1}(K)$. Si $M \in \mathcal{L}(M_0)$, alors $K^M := K \cap M(\A)$ satisfait encore à ces propriétés avec $\tilde{M}$ au lieu de $\tilde{G}$. Les données pour un revêtement adélique sont donc héritées par les sous-groupes de Lévi contenant $M_0$.

À proprement parler, $\tilde{G}$ n'est pas le produit restreint des composantes locales $\tilde{G}_v$ par rapport à $\{K_v : v \notin V_\text{ram}\}$, mais seulement un quotient de ce produit restreint. Pour $f \in C^\infty_c(\tilde{G})$, on écrira $f = \prod_v f_v$ avec $f_v \in C^\infty_c(\tilde{G}_v)$ si la tirée-en-arrière de $f$ à $\Resprod_v \tilde{G}_v$ admet cette factorisation. De même, on peut parler des factorisations $\pi = \bigotimes'_v \pi_v$ de représentations irréductibles de $\tilde{G}$.

Une représentation $\pi$ de $\tilde{G}$ est dite spécifique si $\pi(\noyau)=\noyau \cdot \identity$ pour tout $\noyau \in \bmu_m$. Une fonction $f$ sur $\tilde{G}$ est dite anti-spécifique si $f(\noyau \tilde{x}) = \noyau^{-1} f(\tilde{x})$ pour tout $\noyau \in \bmu_m$. On définit la notion des distributions spécifiques de façon usuelle, de sorte qu'elle englobe les caractères de représentations spécifiques. Les espaces des objets spécifiques (resp. anti-spécifiques) sont affectés d'un sous-script $-$ (resp. $\asp$), eg. $C_{c,\asp}(\tilde{G})$. Les mêmes conventions s'appliquent aux sous-ensembles de $\tilde{G}$ stables par $\bmu_m$.

Nos notations pour les systèmes de racines, sous-groupes paraboliques, $(G,M)$-familles etc., sont celles d'Arthur. Cf. \cite[\S 2.3, \S 4]{Li14a}. Par exemple, soit $M$ un sous-groupe de Lévi de $G$ contenant $M_0$, on a les $\R$-espaces $\mathfrak{a}_M$, $\mathfrak{a}^G_M$, etc. Rappelons la définition de $\mathfrak{a}_M$:
$$ \mathfrak{a}_M := \Hom_\Z(\Hom_{F-\text{grp}}(M, \Gm), \R). $$
On a un scindage canonique $\mathfrak{a}_M = \mathfrak{a}^G_M \oplus \mathfrak{a}_G$. D'autre part, pour $x \in M(\A)$, l'homomorphisme $\chi \mapsto \log|\chi(x)|$ de $\Hom_F(M, \Gm)$ sur $\R$ induit un homomorphisme $H_M: M(\A) \to \mathfrak{a}_M$, ce que l'on appelle l'application d'Harish-Chandra. Soit $P$ un sous-groupe parabolique de $G$ admettant $M$ comme une composante de Lévi. La décomposition d'Iwasawa $G(\A) = P(\A)K$ et l'application quotient $P \twoheadrightarrow M$ permettent de bien définir $H_P: G(\A) \to \mathfrak{a}_M$. En composant avec $\rev$, on obtient $H_{\tilde{P}}: \tilde{P} \to \mathfrak{a}_M$.

Les duaux linéaires de $\mathfrak{a}_M$, etc. sont affectés de l'exposant $*$ et les complexifiés sont affectés du sous-script $\C$. On définit les ensembles finis suivant.
\begin{align*}
  \mathcal{L}^G(M) & := \{\text{sous-groupes de Lévi de $G$ contenant $M$}\}, \\
  \mathcal{P}^G(M) & := \{\text{sous-groupes paraboliques ayant $M$ comme une composante de Lévi}\}, \\
  \mathcal{F}^G(M) & := \{\text{sous-groupes paraboliques contenant $M$}\}, \\
  W^G_0 & := \text{le groupe de Weyl de $G$ par rapport à $M_0$}, \\
  W^G(M) & := \{w \in W^G_0 : wMw^{-1} = M \}/W^M_0, \\
  W^G_\text{reg}(M) & := \{w \in W^G(M): \det(w-1|\mathfrak{a}^G_M) \neq 0\}.
\end{align*}
Lorsqu'il n'y a pas de confusion à craindre sur $G$, on supprime l'exposant $G$.

Pour tout $P \in \mathcal{F}(M_0)$, on désigne son opposé relativement à $M_0$ par $\bar{P}$. La décomposition de Lévi de $P$ est écrite comme $P=MU$. On note $\Sigma_P \subset \mathfrak{a}^*_M$ l'ensemble des racines positives associé; son sous-ensemble des racines réduites est noté $\Sigma^{\text{red}}_P$. Les coracines sont désignées par $\alpha^\vee$, etc. 

Il y a aussi une recette pour choisir les mesures de Haar: voir \cite[\S 2.5]{Li14a}.

\subsection{Fonctions test}\label{sec:fonctions-test}
On pose $\tilde{G}^1 := \Ker(H_{\tilde{G}}) = \rev^{-1}(G(\A)^1)$, où $H_{\tilde{G}} := H_G \circ \rev$ est l'application d'Harish-Chandra. Alors $\tilde{G}^1 \supset \tilde{K}$.

Plus généralement, étant donnés $V$ un ensemble de places de $F$, et un sous-ensemble $E \subset \mathfrak{a}_G$, on pose
\begin{gather}\label{eqn:G_V^E}
  \tilde{G}_V^E := \{ \tilde{x} \in \tilde{G}_V : H_{\tilde{G}}(\tilde{x}) \in E \}.
\end{gather}
Voici une ambiguïté potentielle: lorsque $E=\{0\}$, on retrouve $\tilde{G}_V^1$.

Supposons désormais que $V \supset V_\text{ram}$. On note $F_\infty := \prod_{v|\infty} F_v$. On a aussi défini un sous-groupe central fermé $A_{G,\infty} \subset G(F_\infty)$ tel que $G(\A)^1 \times A_{G,\infty} = G(\A)$, et $H_G$ induit un isomorphisme $A_{G,\infty} \rightiso \mathfrak{a}_G$ en tant que groupes de Lie. Donc $A_{G,\infty}$ se relève canoniquement à un sous-groupe de $\tilde{G}$, qui est aussi central. Par conséquent
\begin{align}
  \label{eqn:iden}
  \tilde{G} & = \tilde{G}^1 \times A_{G,\infty}, \\
  \tilde{G}_V & = \tilde{G}^1_V \times A_{G,\infty}.
\end{align}
Si $M$ est un sous-groupe de Lévi de $G$, alors $A_{G,\infty} \hookrightarrow A_{M,\infty}$ de façon compatible avec $\mathfrak{a}_G \hookrightarrow \mathfrak{a}_M$.

%Il y a deux façon de regarder $\tilde{G}^1$: comme un sous-groupe distingué de $\tilde{G}$, ou comme le quotient $\tilde{G}/A_{G,\infty}$.
Inspiré par \cite{Ar02}, nous adoptons les conventions suivantes
\begin{itemize}
  \item une fonction sur $\tilde{G}$ est écrite comme $\mathring{f}$;
  \item une fonction sur $\tilde{G}^1$ ou sur $\tilde{G}/A_{G,\infty}$ est écrite comme $\mathring{f}^1$;
  \item une fonction sur $\tilde{G}_V$ est écrite comme $f$;
  \item une fonction sur $\tilde{G}^1_V$ ou sur $\tilde{G}_V/A_{G,\infty}$ est écrite comme $f^1$;
\end{itemize}

Soient $z \in \tilde{G}$, $f$ une fonction sur $\tilde{G}$. On note $f_z$ la fonction translatée $f_z: \tilde{x} \mapsto f(z\tilde{x})$ sur $\tilde{G}$. Cette convention s'applique également au cas local.

\paragraph{Le cas global}
Suivant Arthur, on prend les espaces de Hecke
\begin{align*}
  \mathcal{H}_{\asp}(\tilde{G}) & := \{ \mathring{f} \in C_{c,\asp}^\infty(\tilde{G}) : \text{$\tilde{K}$-fini par translation à gauche et à droite} \},\\
  \mathcal{H}_{\asp}(\tilde{G}^1) & := \{ \mathring{f}^1 \in C_{c,\asp}^\infty(\tilde{G}^1) : \text{$\tilde{K}$-fini par translation à gauche et à droite} \}.
\end{align*}
On a l'application de restriction $\mathcal{H}_{\asp}(\tilde{G}) \to \mathcal{H}_{\asp}(\tilde{G}^1)$ qui est surjective. Tel est le point de vue adopté dans \cite{Li14a,Li12b,Li13} où nous considérons effectivement des distributions sur $\tilde{G}^1$. Dans cet article, nous préférons de travailler avec l'espace
\begin{gather*}
  \mathcal{H}_{\asp}(\tilde{G}, A_{G,\infty}) := \{ \mathring{f}^1 \in C_{c,\asp}^\infty(\tilde{G}/A_{G,\infty}) : \text{$\tilde{K}$-fini par translation à gauche et à droite} \}.
\end{gather*}

On dispose d'une application surjective
\begin{align}\label{eqn:f-moyenne-global}
  \mathcal{H}_{\asp}(\tilde{G}) & \longrightarrow \mathcal{H}_{\asp}(\tilde{G},A_{G,\infty}), \\
  \mathring{f} & \longmapsto \left[\mathring{f}^1(\cdot) := \int_{A_{G,\infty}} \mathring{f}_z(\cdot) \dd z \right].
\end{align}

Remarquons que $\mathcal{H}_{\asp}(\tilde{G})$ est une algèbre et $\mathcal{H}_{\asp}(\tilde{G}, A_{G,\infty})$ est un $\mathcal{H}_{\asp}(\tilde{G})$-bimodule par rapport au produit convolution.

Adoptons la convention suivante dans cet article. Soit $J$ une distribution spécifique sur $\tilde{G}^1$, c'est-à-dire une forme linéaire $\mathcal{H}_{\asp}(\tilde{G}^1) \to \C$. Elle s'identifie à une forme linéaire $\mathcal{H}_{\asp}(\tilde{G}, A_{G,\infty}) \to \C$ par \eqref{eqn:iden}. On en déduit une forme linéaire de  $\mathcal{H}_{\asp}(\tilde{G})$, notée toujours $J$, caractérisée par la formule
\begin{gather}\label{eqn:dist-recette}
  J(\mathring{f}) = J(\mathring{f}^1), \quad \mathring{f} \in \mathcal{H}_{\asp}(\tilde{G})
\end{gather}
à travers de l'application $\mathring{f} \mapsto \mathring{f}^1$ ci-dessus.

\paragraph{Le cas local}
Supposons que $V \supset V_\infty$. On peut toujours définir les espaces $\mathcal{H}_{\asp}(\tilde{G}_V)$, $\mathcal{H}_{\asp}(\tilde{G}_V, A_{G,\infty})$. Comme dans le cas précédent, $\mathcal{H}_{\asp}(\tilde{G}_V)$ est une algèbre et $\mathcal{H}_{\asp}(\tilde{G}_V,A_{G,\infty})$ est un $\mathcal{H}_{\asp}(\tilde{G}_V)$-bimodule pour le produit convolution. On dispose d'une application $\mathcal{H}_{\asp}(\tilde{G}_V) \to \mathcal{H}_{\asp}(\tilde{G}_V, A_{G,\infty})$ définie par
\begin{gather}\label{eqn:f-f^1}
  f \longmapsto \left[ f^1(\cdot) := \int_{A_{G,\infty}} f_z(\cdot) \dd z \right].
\end{gather}
De même, la convention \eqref{eqn:dist-recette} concernant les distributions spécifiques s'y applique.

Soit $J: \mathcal{H}_{\asp}(\tilde{G}_V, A_{G,\infty}) \to \C$ une distribution spécifique. Par la même recette, on en déduit une distribution $J: \mathcal{H}_{\asp}(\tilde{G}_V) \to \C$ caractérisée par $J(f)=J(f^1)$.

On reviendra aux espaces de fonctions test dans un cadre plus général dans \S\ref{sec:PW}.

\subsection{Distributions géométriques et spectrales}\label{sec:dists}
On fixe un ensemble fini $V$ des places de $F$. Dans cette sous-section, on va fixer des ensembles qui serviront à indexer les termes dans la formule des traces invariante.

Dans ce qui suit, une distribution spécifique sur $\tilde{G}_V$ signifie une forme linéaire de $\mathcal{H}_{\asp}(\tilde{G}_V)$. Lorsque $V \supset V_\infty$, une distribution spécifique $A_{G,\infty}$-équivariante signifie une forme linéaire de $\mathcal{H}_{\asp}(\tilde{G}_V, A_{G,\infty})$. Les deux espaces ne sont pas stables sous conjugaison par $\tilde{G}_V$, donc la définition usuelle d'invariance ne marche pas. Or ils sont stables sous convolution par éléments de $\mathcal{H}_{\asp}(\tilde{G}_V)$, ce qui justifie la définition suivante.

\begin{definition-fr}\label{def:dist-inv}
  On dit qu'une distribution spécifique $D$ sur $\tilde{G}_V$ est invariante si
  $$ D(f*h)=D(h*f), \quad f,h \in \mathcal{H}_{\asp}(\tilde{G}_V). $$

  Idem pour les distributions $A_{G,\infty}$-équivariantes pourvu que $V \supset V_\infty$.
\end{definition-fr}

\paragraph{Base géométrique}
On note $\mathcal{D}(\tilde{G}_V)$ l'espace des distributions spécifiques $D$ sur $\tilde{G}_V$ vérifiant
\begin{itemize}
  \item $D$ est invariante;
  \item $\Supp(D)$ est une réunion finie de classes de conjugaison;
  \item si $f$ est nulle sur $\Supp(D)$, alors $D(f)=0$.
\end{itemize}

Rappelons maintenant la notion des bons éléments et des bonnes classes \cite[Définition 2.6.1]{Li14a}.
\begin{definition-fr}\label{def:bonte}
  On dit qu'un élément $\tilde{x} \in \tilde{G}_V$ est bon si $Z_{\tilde{G}_V}(\tilde{x}) = \rev^{-1}(Z_{G(F_V)}(x))$, où $x := \rev(\tilde{x})$. La bonté de $\tilde{x}$ ne dépend que de la classe de conjugaison de $x$ par $G(F_V)$; par conséquent, on parle aussi de la bonté d'une classe de conjugaison dans $G(F_V)$ ou dans $\tilde{G}_V$.

  Ici, l'ensemble $V$ peut être n'importe quel sous-ensemble des places de $F$. On dispose donc de la notion des bons éléments dans le revêtement adélique $\rev: \tilde{G} \to G(\A)$.
\end{definition-fr}

On note $\Gamma(\tilde{G}_V)$ l'ensemble des bonnes classes de conjugaison dans $\tilde{G}_V$. Pour définir les intégrales orbitales anti-spécifiques (qui sont des distributions spécifiques), il faut choisir des mesures invariantes sur ces classes. Cela conduit au $\R_{>0}$-torseur $\dot{\Gamma}(\tilde{G}_V) \to \Gamma(\tilde{G}_V)$. Dans cet article, on fixe un relèvement dans $\dot{\Gamma}(\tilde{G}_V)$ pour chaque élément de $\Gamma(\tilde{G}_V)$. Par conséquent, on regarde $\Gamma(\tilde{G}_V)$ comme une base de $\mathcal{D}(\tilde{G}_V)$.

Remarquons qu'à cause de la dernière condition ci-dessus, $\mathcal{D}(\cdots)$ correspond essentiellement à l'ensemble noté $\mathcal{D}_\text{orb}(\cdots)$ dans \cite[\S 1]{Ar02}. De même, notre $\Gamma(\cdots)$ correspond à $\Gamma_\text{orb}(\cdots)$ dans op.\! cit.

Rappelons aussi qu'il y a des sous-ensembles $\Gamma_\text{unip}(\tilde{G}_V)$ et $\Gamma_\text{ss}(\tilde{G}_V) \supset \Gamma_\text{reg}(\tilde{G}_V)$ des éléments unipotents, semi-simples et semi-simples réguliers, respectivement.

Varions maintenant l'ensemble $V$. On demande que pour toute décomposition $V=V_1 \sqcup V_2$, les produits tensoriels des éléments dans $\Gamma(\tilde{G}_{V_1})$ et $\Gamma(\tilde{G}_{V_2})$ donnent $\Gamma(\tilde{G}_V)$.

\paragraph{Base spectrale}
On note $\mathcal{F}(\tilde{G}_V)$ l'espace des distributions spécifiques invariantes sur $\tilde{G}_V$ engendré par les caractères des représentations spécifiques admissibles de longueur finie. On en fixe une base $\Pi_-(\tilde{G}_V)$, disons l'ensemble des classes d'équivalence des représentations spécifiques irréductibles admissibles de $\tilde{G}_V$.

Comme la base géométrique, $\Pi_-(\tilde{G}_V)$ admet une chaîne de sous-ensembles $\Pi_{\text{unit},-}(\tilde{G}_V) \supset \Pi_{\text{temp},-}(\tilde{G}_V)$ correspondant aux représentations unitaires et tempérées, respectivement.

C'est clair que pour toute décomposition $V=V_1 \sqcup V_2$, les produits tensoriels des éléments dans $\Pi_-(\tilde{G}_{V_1})$ et $\Pi_-(\tilde{G}_{V_2})$ donnent $\Pi_-(\tilde{G}_V)$.

\paragraph{$A_{G,\infty}$-équivariance}
Dans ce paragraphe, on suppose que $V \supset V_\infty$. Donc on a $\tilde{G}_V = \tilde{G}^1_V \times A_{G,\infty}$.

On définit les espaces $\mathcal{D}(\tilde{G}_V, A_{G,\infty})$ et $\mathcal{F}(\tilde{G}_V, A_{G,\infty})$ en considérant les formes linéaires sur $\mathcal{H}_{\asp}(\tilde{G}_V, A_{G,\infty})$. Pour $\mathcal{D}(\tilde{G}_V, A_{G,\infty})$, la condition sur $\Supp(D)$ devient qu'il est une réunion finie de classes de conjugaison modulo $A_{G,\infty}$.

Étant fixées les bases $\Gamma(\tilde{G}_V)$ et $\Pi_-(\tilde{G}_V)$, c'est facile de définir leurs versions équivariantes. Définissons d'abord la base $\Gamma(\tilde{G}_V, A_{G,\infty})$ de $\mathcal{D}(\tilde{G}_V, A_{G,\infty})$. On note $\Gamma(\tilde{G}_V^1)$ le sous-ensemble de $\Gamma(\tilde{G}_V)$ des classes à support dans $\tilde{G}_V^1$, alors $\Gamma(\tilde{G}_V^1)$ forme une base de $\mathcal{D}(\tilde{G}_V, A_{G,\infty})$: pour tous $D \in \Gamma(\tilde{G}^1_V)$ et $f^1 \in \mathcal{H}_{\asp}(\tilde{G}_V, A_{G,\infty})$, on pose $D(f^1) := D(f^{1,b})$ où $f^{1,b}(\cdot) = f^1(\cdot) b(H_{\tilde{G}}(\cdot))$ appartient à $\mathcal{H}_{\asp}(\tilde{G}_V)$, et $b \in C^\infty_c(\mathfrak{a}_G)$ satisfait à $b(0)=1$. C'est bien défini. D'autre part, $\Pi_-(\tilde{G}_V, A_{G,\infty})$ s'obtient en prenant les éléments de $\Pi_-(\tilde{G}_V)$ dont le caractère central sous $A_{G,\infty}$ est trivial.

\paragraph{Desiderata}
On construira dans \S\ref{sec:developpements} deux ensembles
\begin{align*}
  \Gamma(\tilde{M}^1, V) & \subset \Gamma(\tilde{M}_V, A_{M,\infty}), \\
  \Pi_{t,-}(\tilde{M}^1, V) & \subset \Pi_{\text{unit},-}(\tilde{M}_V, A_{M,\infty}) \quad (t \geq 0),
\end{align*}
pour tout $M \in \mathcal{L}(M_0)$, qui servent à indexer les termes intervenant dans la formule des traces invariante pour $\tilde{G}$. L'ensemble $\Pi_{t,-}(\tilde{M}^1, V)$ sera muni d'une mesure.

\section{Espaces de Paley-Wiener}\label{sec:PW}
Dans cette section, on fixe un revêtement $\rev: \tilde{G} \to G(\A)$, un ensemble fini $V$ de places de $F$, d'où se déduit le revêtement local $\tilde{G}_V \to G(F_V)$, et un sous-groupe compact maximal $\tilde{K}_V = \prod_{v \in V} K_v$ de $\tilde{G}_V$. On fixe de plus un sous-groupe de Lévi minimal $M_0$ de $G$ et on suppose que $M_0$ est en bonne position relativement à $K_v$, pour tout $v \in V$. On fixe aussi un ensemble fini $\Gamma$ de représentations irréductibles de $\tilde{K}_V$ (i.e. $\tilde{K}_V$-types).

\subsection{Fonctions test équivariantes}\label{sec:PW-fonctions}
La notion suivante est due à Arthur.

\begin{definition-fr}\label{def:adherence}
  On dit que $V$ satisfait à la condition d'adhérence si soit $V \cap V_\infty \neq \emptyset$, soit les $F_v$, $v \in V$ ont la même caractéristique résiduelle.
\end{definition-fr}
Supposons désormais que $V$ satisfait à la condition d'adhérence. On pose
\begin{align*}
  A_{G,\infty,V} & := \tilde{G}_V \cap A_{G,\infty}, \\
  \mathfrak{a}_{G,V} & := H_G(G(F_V)).
\end{align*}
La condition d'adhérence garantit que
$$ \mathfrak{a}_{G,V} = \begin{cases}
  \mathfrak{a}_G, & \text{si } V \cap V_\infty \neq \emptyset, \\
  \text{un réseau dans } \mathfrak{a}_G, & \text{sinon}.
\end{cases}$$
Lorsque $V \cap V_\infty \neq \emptyset$, la définition de $A_{G,\infty}$ dans \cite[\S 3.4]{Li14a} implique que $H_{\tilde{G}}: A_{G,\infty,V} \rightiso \mathfrak{a}_G$.

On note $i\mathfrak{a}^*_{G,V}$ le dual de Pontryagin de $\mathfrak{a}_{G,V}$ muni de la mesure de Haar duale. Il se réalise comme un quotient de $i\mathfrak{a}^*_G$. L'accouplement $i\mathfrak{a}^*_{G,V} \times \mathfrak{a}_{G,V} \to \C^\times$ peut donc s'écrire comme $e^{\angles{\cdot,\cdot}}$. Lorsque $V$ se réduit à $\{v\}$, on écrit également $\mathfrak{a}_{G,F_v}$ et $i\mathfrak{a}^*_{G,F_v}$.

\begin{definition-fr}
  Une donnée centrale pour $\tilde{G}_V$ est une paire $(A,\mathfrak{a})$ où $A \subset A_{G,\infty,V}$ est un sous-groupe de Lie connexe, $\mathfrak{a} \subset \mathfrak{a}_{G,V}$, tels que $H_G: A \rightiso \mathfrak{a}$. On suppose de plus qu'ils sont munis de mesures de Haar compatibles.
\end{definition-fr}
Notre définition entraîne que $A=\{1\}$ si $V \cap V_{\infty} = \emptyset$. On note
$$ \mathfrak{a}^\perp := \left\{ \lambda \in i\mathfrak{a}^*_{G,V} : e^{\angles{\lambda,\cdot}}=1 \text{ sur } \mathfrak{a} \right\}. $$
C'est muni de la mesure de Haar duale à celle de $\mathfrak{a}_{G,V}/\mathfrak{a}$. On le désignera souvent par $i\mathfrak{a}_{G,V}^* \cap \mathfrak{a}^\perp$ afin de sous-ligner la référence à $G$.

Soit $M$ un sous-groupe de Lévi de $G$, alors les mêmes notations s'y appliquent et il y a un homomorphisme surjectif canonique
\begin{gather}\label{eqn:h_G}
  h_G: \mathfrak{a}_{M,V} \to \mathfrak{a}_{G,V},
\end{gather}
d'où l'injection duale $i\mathfrak{a}_{G,V}^* \hookrightarrow i\mathfrak{a}_{M,V}^*$. Remarquons aussi que $(A,\mathfrak{a})$ sert comme une donnée centrale pour $\tilde{M}_V$.

Soit $N \in \Z_{>0}$. Définissons $\mathcal{H}_{\asp}(\tilde{G}_V, A)_{N,\Gamma}$ comme l'espace des $f \in C_{c,\asp}^\infty(\tilde{G}_V/A)$ telles que
$$ \Supp(f) \subset \{ \tilde{x} \in \tilde{G} : \log\|\tilde{x}\| \leq N \} \cdot A $$
(rappelons que $\|\cdot\|$ est une fonction hauteur choisie) et que les $\tilde{K}_V$-types de l'espace engendré par $f$ sous la translation bilatérale par $\tilde{K}_V$ sont contenus dans $\Gamma$. Cet espace est muni des semi-normes
$$ \|f\|_D = \sup_{\tilde{x}} |Df(\tilde{x})|$$
où $D$ parcourt l'espace des opérateurs différentiels sur $\tilde{G}_{V \cap V_\infty}/A$. On définit les espaces vectoriels topologiques $\mathcal{H}_{\asp}(\tilde{G}_V, A)_\Gamma$ (resp. $\mathcal{H}_{\asp}(\tilde{G}_V, A)$) en prenant la $\varinjlim_N$ (resp. $\varinjlim_{\Gamma,N}$) topologique. On revoie à \cite[p.126 et p.515]{Tr67} pour la définition précise de $\varinjlim$ des espaces localement convexes.

\begin{remarque}\label{rem:topologique-Hecke}
  Du point de vu de l'analyse fonctionnelle, $\mathcal{H}_{\asp}(\tilde{G}_V, A)$ est un espace raisonnable de fonctions test. Effectivement, on peut montrer que $\mathcal{H}_{\asp}(\tilde{G}_V, A)$ est un espace LF nucléaire et son dual fort est aussi complet et nucléaire. De plus, $\mathcal{H}_{\asp}(\tilde{G}_V, A)$ est réflexif. La démonstration est analogue au cas de $C^\infty_c(\tilde{G}_V/A)$, cf. \cite[Chap. 13 et 51]{Tr67} ou \cite[\S 4]{Br61}.
\end{remarque}

\begin{proposition-fr}\label{prop:f-moyenne}
  Soient $(A,\mathfrak{a})$ et $(B,\mathfrak{b})$ des données centrales telles que $A \subset B$, alors l'application linéaire
  \begin{align*}
    \mathcal{H}_{\asp}(\tilde{G}_V, A)_\Gamma & \longrightarrow \mathcal{H}_{\asp}(\tilde{G}_V, B)_\Gamma, \\
    f^A & \longmapsto \left[ f^B(\cdot) = \int_{B/A} f_z(\cdot) \dd z \right]
  \end{align*}
  est continue, surjective et ouverte.

  De plus, cette application est transitive au sens suivant: si $A \subset B \subset C$, alors la composée $f^A \mapsto f^B \mapsto f^C$ est égale à $f^A \mapsto f^C$.
\end{proposition-fr}
\begin{proof}
  C'est une variante de la poussée-en-avant des fonctions test par une submersion. La continuité est facile à vérifier sur $\mathcal{H}_{\asp}(\tilde{G}_V, A)_{N,\Gamma}$, pour chaque $N$. Pour montrer que $f^A \mapsto f^B$ est surjectif et ouvert, il suffit de construire une section continue $\mathcal{H}_{\asp}(\tilde{G}_V, B)_\Gamma \to \mathcal{H}_{\asp}(\tilde{G}_V, A)_\Gamma$. En effet, on peut fixer un homomorphisme continu $\text{pr}: \tilde{G}_V \to B$ tel que $\text{pr}|_B = \identity_B$. Alors une section est donnée par l'application $f^B \mapsto f^B \cdot (\alpha \circ \text{pr})$ avec $\alpha \in C^\infty_c(B/A)$ tel que $\int_{B/A} \alpha(z) \dd z = 1$. Enfin, la transitivité est claire.
\end{proof}

\begin{remarque}
  Si $V \supset V_\infty$ et $A=A_{G,\infty}=A_{G,\infty,V}$, on retrouve l'espace $\mathcal{H}_{\asp}(\tilde{G}_V, A_{G,\infty})$ défini dans \S\ref{sec:fonctions-test}. D'autre part, si $A=\{1\}$, on retrouve l'espace de Hecke usuel $\mathcal{H}_{\asp}(\tilde{G}_V)$. De plus, on retrouve l'application $f \mapsto f^1$ comme un cas spécial de la Proposition \ref{prop:f-moyenne}.
\end{remarque}

\begin{lemme}\label{prop:f-coinvariant}
  L'application $\mathcal{H}_{\asp}(\tilde{G}_V, A)_\Gamma \to \mathcal{H}_{\asp}(\tilde{G}_V, B)_\Gamma$ est l'application co-invariant pour l'action de $B/A$ sur $\mathcal{H}_{\asp}(\tilde{G}_V, A)_\Gamma$ définie par $f^A \mapsto [f^A_z(\cdot) := f^A(z \cdot)]$ où $z \in B/A$.
\end{lemme}
\begin{proof}
  On a vu dans la Proposition \ref{prop:f-moyenne} que $\mathcal{H}_{\asp}(\tilde{G}_V, A)_\Gamma \to \mathcal{H}_{\asp}(\tilde{G}_V, B)_\Gamma$ est une application quotient d'espaces topologiques vectoriels. Vu les propriétés topologiques dans la Remarque \ref{rem:topologique-Hecke}, il revient au même de montrer que l'injection
  $$ \mathcal{H}_{\asp}(\tilde{G}_V, B)'_\Gamma \hookrightarrow \mathcal{H}_{\asp}(\tilde{G}_V, A)'_\Gamma $$
  est l'inclusion des $B/A$-invariants dans $\mathcal{H}_{\asp}(\tilde{G}_V, A)'_\Gamma$, où $(\cdots)'$ signifie le dual muni de la topologie forte. Pour ce faire, on peut reprendre l'argument usuel pour \cite[IV.5, Exemple 1]{S66}.
\end{proof}

\subsection{Espaces de Paley-Wiener équivariants}\label{sec:PW-equiv}
Les constructions dans cette sous-section nécessitent les théorèmes de Paley-Wiener  \S\ref{sec:PW-hyp}. On fixe toujours $V$ satisfaisant à la condition d'adhérence (la Définition \ref{def:adherence}) et une donnée centrale $(A,\mathfrak{a})$. On note
\begin{align*}
  \Pi_{\text{unit},-}(\tilde{G}_V, A) & := \{ \pi \in \Pi_{\text{unit},-}(\tilde{G}_V) : \text{le caractère central de 
$\pi$ est trivial sur } A \}, \\
  \Pi_{\text{temp},-}(\tilde{G}_V, A) & := \Pi_{\text{unit},-}(\tilde{G}_V, A) \cap \Pi_{\text{temp},-}(\tilde{G}_V).
\end{align*}
Pour $f^A \in \mathcal{H}_{\asp}(\tilde{G}_V, A)_\Gamma$ et $\pi \in \Pi_{\text{unit},-}(\tilde{G}_V, A)$, on définit l'opérateur $\pi(f^A) := \int_{\tilde{G}_V/A} f^A(\tilde{x}) \pi(\tilde{x}) \dd\tilde{x}$. On associe à chaque $f^A$ une fonction $\phi: \Pi_{\text{temp},-}(\tilde{G}_V,A) \to \C$ donnée par
$$ \phi(\pi) = \Tr\pi(f^A). $$

On obtient ainsi une application linéaire
$$ \phi_{\tilde{G}}: \mathcal{H}_{\asp}(\tilde{G}_V, A)_\Gamma \to \{\text{fonctions } \Pi_{\text{temp},-}(\tilde{G}_V,A) \to \C \}. $$
Son image est notée $I\mathcal{H}_{\asp}(\tilde{G}_V, A)_\Gamma$. Il y a pourtant une autre interprétation de $\phi_{\tilde{G}}$ que nous préférons. Étant donné $\phi: \Pi_{\text{temp},-}(\tilde{G}_V,A) \to \C$ comme ci-dessus, on considère ses coefficients de Fourier
$$ \phi(\pi,Z) := \int_{\mathfrak{a}^\perp} \phi(\pi_\lambda) e^{-\angles{\lambda,Z}}\dd\lambda, \quad \pi \in \Pi_{\text{temp},-}(\tilde{G}_V,A),\; Z \in \mathfrak{a}_{G,V}/\mathfrak{a}. $$
Il vérifie $\phi(\pi_\lambda,Z)=\phi(\pi,Z) e^{\angles{\lambda,Z}}$. Réciproquement, étant donné $\phi: \Pi_{\text{temp},-}(\tilde{G}_V,A) \times \mathfrak{a}_{G,V}/\mathfrak{a} \to \C$ tel que $\phi(\pi_\lambda,Z)=\phi(\pi,Z) e^{\angles{\lambda,Z}}$, la transformation de Fourier inverse nous permet de retrouver $\phi: \Pi_{\text{temp},-}(\tilde{G}_V,A) \to \C$. L'hypothèse est évidemment que $\lambda \mapsto \phi(\pi_\lambda)$ soit à décroissance rapide de sorte que la transformée de Fourier est bien définie. Cela sera bien le cas pour $\phi \in I\mathcal{H}_{\asp}(\tilde{G}_V, A)_\Gamma$ et nous le regardons désormais comme une fonction sur $\Pi_{\text{temp},-}(\tilde{G}_V,A) \times \mathfrak{a}_{G,V}/\mathfrak{a}$. En fait, on a le résultat suivant.

\begin{proposition-fr}\label{prop:2-variables}
  Soit $f \in \mathcal{H}_{\asp}(\tilde{G}_V, A)_\Gamma$, alors
  $$ \phi_{\tilde{G}}(f^A)(\pi, Z) = \Tr\left( \pi(f^A|_{\tilde{G}_V^{Z+\mathfrak{a}}}) \right), $$
  où on a utilisé la notion \eqref{eqn:G_V^E} et
  $$ \pi(f^A|_{\tilde{G}_V^{Z+\mathfrak{a}}})) := \int_{\tilde{G}^{Z+\mathfrak{a}}_V/A} f^A(\tilde{x}) \dd\tilde{x}. $$
\end{proposition-fr}
C'est une conséquence simple de l'inversion de Fourier. On en démontrera une généralisation aux caractères pondérés dans la Proposition \ref{prop:Fourier-restriction}. 

On va considérer les groupes $\mathcal{M} = \prod_{v \in V} M_v$ où $M_v$ est un sous-groupe de Lévi de $G \times_F F_v$, pour tout $v$. On pose $\mathfrak{a}_{\mathcal{M},V} := \bigoplus_{v \in V} \mathfrak{a}_{M_v, F_v}$; son dual de Pontryagin est noté $i\mathfrak{a}^*_{\mathcal{M},V}$. L'image réciproque de $\prod_{v \in V} M_v(F_v)$ dans $\tilde{G}_V$ est notée $\tilde{\mathcal{M}}$. On a un prolongement canonique $\mathfrak{a} \hookrightarrow \mathfrak{a}_{\mathcal{M},V}$; en effet, il suffit de considérer le cas facile $V \cap V_\infty \neq \emptyset$, car sinon $\mathfrak{a}=\{0\}$. De même, $A$ se plonge canoniquement dans $\tilde{\mathcal{M}}$ comme un sous-groupe central. On dispose toujours de l'application $h_G: \mathfrak{a}_{\mathcal{M},V} \to \mathfrak{a}_{G,V}$ et son dual, et on définit l'ensemble $\Pi_{\text{temp},-}(\tilde{\mathcal{M}}, A)$ de façon évidente.

\begin{definition-fr}\label{def:PW-equiv}
  On définit $\text{PW}_{\asp}(\tilde{G}_V, A)_\Gamma$ comme l'espace des fonctions
  $$ \phi: \Pi_{\text{temp},-}(\tilde{G}_V,A) \times \mathfrak{a}_{G,V}/\mathfrak{a} \to \C $$
  telles que
  \begin{enumerate}
    \item $\phi(\pi_\lambda, Z) = \phi(\pi,Z) e^{\angles{\lambda,Z}}$ pour tout $(\pi,Z)$ et tout $\lambda \in i\mathfrak{a}^*_{G,V}$;
    \item $\phi(\pi,\cdot)=0$ si $\pi$ ne contient aucun $\tilde{K}_V$-type dans $\Gamma$;
    \item pour tout $\mathcal{M}$ et tout $(\sigma,X) \in \Pi_{\text{temp},-}(\tilde{\mathcal{M}}, A) \times \mathfrak{a}_{\mathcal{M},V}/\mathfrak{a}$, on pose
    \begin{gather}\label{eqn:phi-sigma}
      \phi(\sigma,X) := \int_{\frac{i\mathfrak{a}^*_{\mathcal{M},V} \cap \mathfrak{a}^\perp}{i\mathfrak{a}^*_{G,V} \cap \mathfrak{a}^\perp}} \phi(\sigma^G_\Lambda, h_G(X)) e^{-\angles{\Lambda,X}} \dd\Lambda;
    \end{gather}
    alors $\phi(\sigma,\cdot)$ est convergente et définit une fonction dans $C_c^\infty(\mathfrak{a}_{\mathcal{M},V}/\mathfrak{a})$, où $\sigma^G_\Lambda$ est l'induite parabolique normalisée à $\tilde{G}_V$, irréductible pour $\Lambda$ en position générale.
  \end{enumerate}
\end{definition-fr}
On laisse au lecteur le soin de formuler la variante où $\phi$ est regardée comme une fonction $\Pi_{\text{temp},-}(\tilde{G}_V,A) \to \C$.

On munit $\text{PW}_{\asp}(\tilde{G}_V, A)_\Gamma$ de la topologie induite par la topologie de fonctions test sur $C_c^\infty(\mathfrak{a}_{\mathcal{M},V}/\mathfrak{a})$ (celle utilisée par Schwartz lorsque $V \cap V_\infty \neq \emptyset$), à travers des applications $\phi(\sigma,\cdot)$ définies dans \eqref{eqn:phi-sigma}. En prenant $\varinjlim_\Gamma$, on obtient l'espace vectoriel topologique $\text{PW}_{\asp}(\tilde{G}_V, A)$.

\begin{proposition-fr}\label{prop:phi-moyenne}
  Soient $(A,\mathfrak{a})$ et $(B,\mathfrak{b})$ des données centrales telles que $A \subset B$, alors l'application linéaire
  \begin{align*}
    \mathrm{PW}_{\asp}(\tilde{G}_V, A)_\Gamma & \longrightarrow \mathrm{PW}_{\asp}(\tilde{G}_V, B)_\Gamma, \\
    \phi^A & \longmapsto \left[ \phi^B(\pi,Z) = \int_{\mathfrak{b}/\mathfrak{a}} \phi^B(\pi, Z+Y) \dd Y \right]
  \end{align*}
  où $(\pi,Z) \in \Pi_{\mathrm{temp},-}(\tilde{G}_V, B) \times \mathfrak{a}_{G,V}/\mathfrak{b}$, est continue, surjective et ouverte.

  De plus, cette application est transitive au sens suivant: si $A \subset B \subset C$, alors la composée $\phi^A \mapsto \phi^B \mapsto \phi^C$ est égale à $\phi^A \mapsto \phi^C$.
\end{proposition-fr}
\begin{proof}
  L'idée de la preuve est analogue à celle de la Proposition \ref{prop:f-moyenne}. Nous omettons les détails.
\end{proof}

\begin{lemme}\label{prop:phi-coinvariant}
  L'application $\mathrm{PW}_{\asp}(\tilde{G}_V, A)_\Gamma \to \mathrm{PW}_{\asp}(\tilde{G}_V, B)_\Gamma$ est l'application co-invariant pour l'action de $B/A$ sur $\mathrm{PW}_{\asp}(\tilde{G}_V, A)_\Gamma$ définie par
  $$ \phi \mapsto \left[ \phi_z : (\pi,Z) \mapsto \phi(\pi, Z+X) \omega_\pi(z)^{-1} \right], \quad (\pi,Z) \in \Pi_{\mathrm{temp},-}(\tilde{G}_V, A) \times \mathfrak{a}_{G,V}/\mathfrak{a}, $$
  où $z \in B/A$, $X := H_G(z) \in \mathfrak{b}/\mathfrak{a}$ et $\omega_\pi$ est le caractère central de $\pi$.
\end{lemme}
\begin{proof}
  Remarquons que les espaces $\text{PW}_{\asp}(\tilde{G}_V,\cdots)$ satisfont aussi aux propriétés topologiques dans la Remarque \ref{rem:topologique-Hecke}, par les mêmes arguments. La preuve de l'assertion est donc analogue à celle du Lemme \ref{prop:f-coinvariant}.
\end{proof}

\subsection{Hypothèse: théorème de Paley-Wiener invariant}\label{sec:PW-hyp}
Conservons les choix de $(A,\mathfrak{a})$, $\Gamma$, etc. En admettant la théorie de Harish-Chandra sur revêtements, c'est relativement facile de montrer que $\phi_{\tilde{G}}$ induit une application linéaire continue
$$ \phi_{\tilde{G}}: \mathcal{H}_{\asp}(\tilde{G}_V, A)_\Gamma \to \text{PW}_{\asp}(\tilde{G}_V, A)_\Gamma; $$
en particulier, $I\mathcal{H}_{\asp}(\tilde{G}_V, A)_\Gamma \subset \text{PW}_{\asp}(\tilde{G}_V, A)_\Gamma$. Pour le cas archimédien, la théorie de Harish-Chandra s'applique déjà aux revêtements et les arguments sont donnés dans \cite[2.1]{CD84}. Pour le cas non-archimédien, on renvoie à \cite[\S 2]{Li12b}.

\begin{lemme}\label{prop:PW-diagramme}
  Soient $(A,\mathfrak{a})$, $(B,\mathfrak{b})$ deux données centrales telles que $A \subset B$. Alors le diagramme suivant est commutatif
  $$\xymatrix{
    \mathcal{H}_{\asp}(\tilde{G}_V, A) \ar[d] \ar[r]^{\phi^A_{\tilde{G}}} & \mathrm{PW}_{\asp}(\tilde{G}_V, A) \ar[d] \\
    \mathcal{H}_{\asp}(\tilde{G}_V, B) \ar[r]_{\phi^B_{\tilde{G}}} & \mathrm{PW}_{\asp}(\tilde{G}_V, B)
  }$$
  où les flèches verticales sont $f^A \mapsto f^B$ et $\phi^A \to \phi^B$, respectivement. De plus, $\phi^A_{\tilde{G}}$ est équivariante pour les actions de $B/A$ définies dans le Lemme \ref{prop:f-coinvariant}, \ref{prop:phi-coinvariant}.
\end{lemme}
\begin{proof}
  Cela résulte aisément de la Proposition \ref{prop:2-variables}.
\end{proof}

L'hypothèse suivante est cruciale pour la formule des traces invariante. Remarquons que le cas des groupes réductifs connexes avec $A=\{1\}$ a été formulé par Arthur dans \cite[\S 11]{Ar89-IOR1}.
\begin{hypothese}\label{hyp:PW}
  Pour tout $M \in \mathcal{L}(M_0)$, on a $I\mathcal{H}_{\asp}(\tilde{M}_V, A) = \text{PW}_{\asp}(\tilde{M}_V, A)$, et l'application $\phi_{\tilde{M}}$ est une application linéaire surjective ouverte de $\mathcal{H}_{\asp}(\tilde{M}_V, A)_\Gamma$ sur $\text{PW}_{\asp}(\tilde{M}_V, A)_\Gamma$.
\end{hypothese}

Montrons que le cas général se déduit du cas $A=\{1\}$.
\begin{lemme}
  Soient $(A,\mathfrak{a})$, $(B,\mathfrak{b})$ deux données centrales telles que $A \subset B$. Si l'Hypothèse \ref{hyp:PW} est satisfaite pour $A$, alors elle l'est pour $B$.
\end{lemme}
\begin{proof}
  Il suffit de considérer le cas $M=G$. On regarde le diagramme du Lemme \ref{prop:PW-diagramme}. Les flèches verticales sont surjectives et ouvertes, donc $\phi_{\tilde{G}}^B$ est surjective et ouverte si $\phi_{\tilde{G}}^A$ l'est.
\end{proof}
En fait, on voit que $\phi_{\tilde{G}}^B$ est l'application déduite de $\phi_{\tilde{G}}^A$ par le foncteur de co-invariants. 

\subsection{Exemples}\label{sec:PW-exemples}
On a déjà dit qu'il suffit de vérifier l'Hypothèse \ref{hyp:PW} pour le cas $A=\{1\}$. On se ramène facilement au cas où $V=\{v\}$ est un singleton.

Notre hypothèse est vérifiée lorsque $F_v$ est non-archimédien. En effet, il suffit de reprendre les arguments de \cite{BDK86} puisque leurs techniques s'adaptent au cas de revêtements (voir les explications dans \cite[\S 2]{Li12b}).

Lorsque $F_v=\C$, tout revêtement est linéaire et on se ramène au cas traité par \cite{CD84,CD90}. L'hypothèse est donc vérifiée dans ce cas-là.

Le cas $F_v=\R$ est plus délicat car la démonstration du Théorème de Paley-Wiener invariant par Clozel et Delorme \cite{CD84} repose sur certaines propriétés des $K$-types minimaux \cite{Vo79}, qui ne sont connues que pour les groupes de Lie réductifs linéaires ou connexes. Néanmoins, cette difficulté est évitable pour certains revêtements importants. Donnons-en quelques exemples. Écrivons $G$ au lieu de $G \times_F F_v$ pour simplifier les notations.

\renewcommand{\labelenumi}{\Roman{enumi}.}\begin{enumerate}
  \item $G$ est anisotrope modulo le centre.

  Dans ce cas-là, $G$ n'a pas de sous-groupe de Lévi propre et on se ramène à l'analyse harmonique du groupe compact $\tilde{G}^1$ et de $\mathfrak{a}_G$.
  \item $G$ est un tore.

  Cela est un cas particulier du cas précédent.

  \item $\tilde{G} \to G(F_v)$ est un revêtement trivial.

  C'est le cas traité dans \cite{BDK86,CD84,CD90}.

  \item $\tilde{G}=\Mp(2n) \xrightarrow{\rev_n} \Sp(2n,\R)$ est le revêtement métaplectique de Weil \cite{Weil64}.

  Normalement cela signifie le revêtement non-trivial à deux feuillets de $\Sp(2n,\R)$. Ici, on l'augmente en un revêtement à huit feuillets comme dans \cite{Li11} via $\bmu_2 \hookrightarrow \bmu_8$, ce qui est évidemment loisible.  Tout sous-groupe de Lévi admet une décomposition $M = \prod_{i \in I} \GL(n_i) \times \Sp(2n^\flat)$. En fixant un caractère additif non-trivial $\psi: F \to \C^\times$, le revêtement $\rev$ se restreint en
  $$ \tilde{M} = \prod_{i \in I} \GL(n_i,\R) \times \Mp(2n^\flat) \xrightarrow{(\identity, \rev_{n^\flat})} M(\R) = \prod_{i \in I} \GL(n_i,\R) \times \Sp(2n^\flat) $$
  à l'aide du modèle de Schrödinger \cite[\S 5.4]{Li11}.

  Pour vérifier l'Hypothèse \ref{hyp:PW}, on applique le cas des groupes réductifs linéaires aux composantes $\GL(n_i,\R)$; pour la composante $\Mp(2n^\flat)$, on rappelle que c'est un groupe de Lie réductif connexe, donc le Théorème de Paley-Wiener invariant \cite{CD84} est encore valable. Plus généralement, nos hypothèses sont satisfaites pour les groupes dont tous les sous-groupes de Lévi sont un produit de $\R$-points de groupes algébriques avec des groupes de Lie connexes.

  \item $G$ est simplement connexe de rang relatif $1$ sur $\R$, eg. $G=\SL(2)$, et $\tilde{G} \to G(F_v)$ est quelconque.

  En effet, le seul sous-groupe de Lévi propre est un tore, tandis que $G(\R)$ est connexe et $\tilde{G}$ s'écrit comme le produit direct d'un groupe de Lie connexe et un sous-groupe de $\bmu_m$; en tout cas, le Théorème de Paley-Wiener invariant est valable.

  \item $G=U(p,q)$, le groupe unitaire réel de signature $(p,q)$, et $\tilde{G} \to G(F_v)$ est quelconque.

  Tout sous-groupe de Lévi de $G$ est de la forme
  $$ M = \prod_{i \in I} \GL_\C(n_i) \times \U(p',q'), $$
  d'où $M(\R)$ est connexe et on raisonne comme précédemment pour conclure.

  \item $G=\GL(n)$, $\tilde{G} \to G(F_v)$ quelconque.

  Nous avons remarqué que la preuve de Clozel et Delorme utilise des propriétés des $K$-types minimaux afin de décomposer l'induite parabolique normalisée de séries discrètes. Tout d'abord, spécialisons au cas où $\tilde{G}=G(\R)=\GL(n,R)$. La preuve du Théorème de Paley-Wiener invariant se simplifie énormément dans ce cas-là car l'induite parabolique de toute représentation unitaire d'un sous-groupe de Lévi est irréductible: voir par exemple la preuve de Vogan via induction cohomologique \cite[Theorem 17.6]{Vo86}. D'après \cite{Hu90}, la méthode de Vogan s'adapte aussi aux revêtements de $\GL(n,\R)$ donc l'irréductibilité d'induites paraboliques vaut pour $\tilde{G}$ et ses sous-groupes de Lévi (considérons induction par étapes). D'où l'Hypothèse \ref{hyp:PW}.

  \item $G=\GL(n,D)$ ou $\SL(n,D)$, où $D$ est l'algèbre de quaternions non-déployée sur $\R$, et $\tilde{G} \to G(F_v)$ est quelconque.

  Comme dans le cas de $\SL(2)$, il suffit de montrer que tout sous-groupe de Lévi de $G$ est connexe en tant qu'un groupe de Lie. La décomposition de Bruhat nous permet de se ramener au cas d'un sous-groupe de Lévi minimal. Traitons d'abord le cas de $\SL(n,D)$. Un sous-groupe de Lévi minimal $M$, confondu avec ses $\R$-points, est de la forme
  $$ M(\R) = \left\{ (x_i)_{i=1}^n \in  (D^\times)^n : \prod_{i=1}^n N(x_i)=1 \right\}$$
  où $N$ désigne la norme réduite de $D$. Or $D^1 := \Ker(N)$ est connexe et $D^\times = D^1 \times \R_{>0}$, donc $M(\R)$ est aussi connexe. Le cas de $\GL(n,D)$ est similaire.

\end{enumerate}\renewcommand{\labelenumi}{\arabic{enumi}.}

\subsection{Versions à support presque compact}\label{sec:ac}
Conservons les mêmes choix de $V$, $(A,\mathfrak{a})$ et $\Gamma$. On commence par observer que $C^\infty(\mathfrak{a}_{G,V}/\mathfrak{a})$ opère sur $C^\infty_{\asp}(\tilde{G}_V/A)$ par
$$ f \mapsto [f^b(\cdot) = f(\cdot) b(H_{\tilde{G}}(\cdot))], \quad b \in C^\infty(\mathfrak{a}_{G,V}/\mathfrak{a}), \; f \in C^\infty_{\asp}(\tilde{G}_V/A). $$

Cette action préserve $\mathcal{H}_{\asp}(\tilde{G}_V, A)_\Gamma$, ce qui permet de définir l'espace de Hecke à support presque compact (abrégé par ``ac'')
$$ \mathcal{H}_{\text{ac},\asp}(\tilde{G}_V, A)_\Gamma := \left\{f \in C^\infty_{\asp}(\tilde{G}_V/A) : \forall b \in C^\infty_c(\mathfrak{a}_{G,V}/\mathfrak{a}) \text{ on a } f^b \in \mathcal{H}_{\asp}(\tilde{G}_V, A)_\Gamma \right\}. $$
Il est muni de la topologie induite des semi-normes $f \mapsto \|f^b\|$, où $b \in C^\infty_c(\mathfrak{a}_{G,V}/\mathfrak{a})$ et $\|\cdot\|$ parcourt un ensemble de semi-normes définissant la topologie de $\mathcal{H}_{\asp}(\tilde{G}_V, A)_\Gamma$.

De même, $C^\infty(\mathfrak{a}_{G,V}/\mathfrak{a})$ opère sur l'ensemble des fonctions $\Pi_{\text{temp},-}(\tilde{G}_V, A) \times \mathfrak{a}_{G,V}/\mathfrak{a} \to \C$ par
$$ \phi \mapsto [\phi^b(\pi,Z) := \phi(\pi,Z)b(Z) ], \quad b \in C^\infty_{\asp}(\mathfrak{a}_{G,V}/\mathfrak{a}). $$
Comme dans le cas précédent, on définit
$$ \text{PW}_{\text{ac},\asp}(\tilde{G}_V, A)_\Gamma := \left\{ \begin{array}{l}
  \phi: \Pi_{\text{temp},-}(\tilde{G}_V, A) \times \mathfrak{a}_{G,V}/\mathfrak{a} \to \C, \\
  \forall b \in C^\infty_c(\mathfrak{a}_{G,V}/\mathfrak{a}) \text{ on a } \phi^b \in \text{PW}_{\asp}(\tilde{G}_V, A)_\Gamma
\end{array} \right\}.$$
On le munit de la topologie induite par les semi-normes $\phi \mapsto \|\phi^b\|$, où $b \in C^\infty_c(\mathfrak{a}_{G,V}/\mathfrak{a})$ et $\|\cdot\|$ parcourt un ensemble de semi-normes définissant la topologie de $\text{PW}_{\asp}(\tilde{G}_V, A)_\Gamma$.

On a $\mathcal{H}_{\text{ac},\asp}(\tilde{G}_V, A)_\Gamma \supset \mathcal{H}_{\asp}(\tilde{G}_V, A)_\Gamma$ et $\text{PW}_{\text{ac},\asp}(\tilde{G}_V, A)_\Gamma \supset \text{PW}_{\asp}(\tilde{G}_V, A)_\Gamma$. Passage à la $\varinjlim_\Gamma$ permet de définir leur avatar sans l'indice $\Gamma$.

Vu la Proposition \ref{prop:2-variables}, l'application $\phi_{\tilde{G}}$ se prolonge canoniquement en une application de $\mathcal{H}_{\text{ac},\asp}(\tilde{G}_V,A)$ sur l'ensemble des fonctions $\Pi_{\text{temp},-}(\tilde{G}_V, A) \times \mathfrak{a}_{G,V}/\mathfrak{a} \to \C$. On note son image par $I\mathcal{H}_{\text{ac},\asp}(\tilde{G}_V, A)$. Comme dans le cas des fonctions à support compact, $\phi_{\tilde{G}}$ donne toujours une application linéaire continue
$$ \phi_{\tilde{G}}: \mathcal{H}_{\text{ac},\asp}(\tilde{G}_V,A)_\Gamma \longrightarrow \text{PW}_{\text{ac},\asp}(\tilde{G}_V,A)_\Gamma . $$

\begin{proposition-fr}
  Si l'Hypothèse \ref{hyp:PW} est satisfaite, alors $I\mathcal{H}_{\mathrm{ac},\asp}(\tilde{G}_V,A)_\Gamma = \text{PW}_{\mathrm{ac},\asp}(\tilde{G}_V,A)_\Gamma$; de plus, $\phi_{\tilde{G}}$ est une application linéaire continue surjective et ouverte de $\mathcal{H}_{\mathrm{ac},\asp}(\tilde{G}_V,A)_\Gamma$ sur $\text{PW}_{\mathrm{ac},\asp}(\tilde{G}_V,A)_\Gamma$.
\end{proposition-fr}
\begin{proof}
  Cela résulte aisément de l'Hypothèse \ref{hyp:PW} à l'aide des multiplicateurs $b \in C_c^\infty(\mathfrak{a}_{G,V}/\mathfrak{a})$ et des définitions des espaces à support presque compact. Cf. \cite[p.329]{Ar88-TF1}.
\end{proof}

\begin{remarque}
  Si $V \cap V_\infty \neq \emptyset$ et $A=A_{G,V,\infty}$, alors $\mathfrak{a}_{G,V}/\mathfrak{a}=\{0\}$ et on a $\mathcal{H}_{\text{ac},\asp}(\tilde{G}_V,A)_\Gamma=\mathcal{H}_{\asp}(\tilde{G}_V,A)_\Gamma$.
\end{remarque}

Introduisons une opération importante sur ces espaces à support presque compact. Soient $(A,\mathfrak{a})$, $(B,\mathfrak{b})$ deux données centrales telles que $A \subset B$. On a l'inclusion naturelle
$$ \mathcal{H}_{\text{ac},\asp}(\tilde{G}_V, B)_\Gamma \hookrightarrow \mathcal{H}_{\text{ac},\asp}(\tilde{G}_V, A)_\Gamma. $$

D'autre part, on fixe une section de $\mathfrak{a}_{G,V}/\mathfrak{a} \twoheadrightarrow \mathfrak{a}_{G,V}/\mathfrak{b}$, ce qui existe: en effet, l'existence est tautologique sauf si $V \cap V_\infty \neq \emptyset$, mais dans ce cas-là tout espace en vue est un $\R$-espace vectoriel. Notons-la
$$ s: \mathfrak{a}_{G,V}/\mathfrak{b} \hookrightarrow \mathfrak{a}_{G,V}/\mathfrak{a}. $$

Cela permet de définir une application injective
\begin{gather}\label{eqn:res-inclusion}
  \Pi_{\text{temp},-}(\tilde{G}_V,B) \times \mathfrak{a}_{G,V}/\mathfrak{b} \longrightarrow \Pi_{\text{temp},-}(\tilde{G}_V,A) \times \mathfrak{a}_{G,V}/\mathfrak{a}.
\end{gather}

Soit $\phi: \Pi_{\text{temp},-}(\tilde{G}_V,A) \times \mathfrak{a}_{G,V}/\mathfrak{a} \to \C$ une fonction quelconque, on en déduit une fonction $\text{res}(\phi): \Pi_{\text{temp},-}(\tilde{G}_V,B) \times \mathfrak{a}_{G,V}/\mathfrak{b} \to \C$ grâce à \eqref{eqn:res-inclusion}. L'observation simple suivante nous sera utile.

\begin{lemme}\label{prop:G-M-V-observation}
  Soit $M \in \mathcal{L}(M_0)$, on a l'isomorphisme
  $$ \frac{i\mathfrak{a}^*_{M,V} \cap \mathfrak{a}^\perp}{i\mathfrak{a}^*_{G,V} \cap \mathfrak{a}^\perp} \rightiso \frac{i\mathfrak{a}_{M,V}^*}{i\mathfrak{a}^*_{G,V}} $$
  qui préserve les mesures de Haar.

  Idem si l'on remplace $M$ par $\mathcal{M} = \prod_{v \in V} M_v$ défini dans \S\ref{sec:PW-equiv}.
\end{lemme}
\begin{proof}
  %On observe que l'assertion est une tautologie si $V \cap V_\infty = \emptyset$. Supposons donc $V \cap V_\infty \neq \emptyset$, alors tout groupe dans ces diagrammes est un $\R$-espace vectoriel.
  On part du diagramme commutatif des groupes topologiques localement compacts commutatifs.
  $$\xymatrix{
    0 \ar[r] & \mathfrak{a} \ar[r] & \mathfrak{a}_{M,V} \ar[r] \ar[d]^{h_G} & \mathfrak{a}_{M,V}/\mathfrak{a} \ar[r] \ar[d] & 0 \\
    0 \ar[r] & \mathfrak{a} \ar[r] \ar@{=}[u] & \mathfrak{a}_{G,V} \ar[r] & \mathfrak{a}_{G,V}/\mathfrak{a} \ar[r] & 0
  }$$
  Les lignes sont des suites exactes courtes qui respectent les mesures de Haar. En dualisant, on obtient le diagramme avec mesures
  $$\xymatrix{
    0 \ar[r] & i\mathfrak{a}^*_{M,V} \cap \mathfrak{a}^\perp \ar[r] & i\mathfrak{a}^*_{M,V} \ar[r] & \mathfrak{a}^* \ar[r] & 0 \\
    0 \ar[r] & i\mathfrak{a}^*_{G,V} \cap \mathfrak{a}^\perp \ar[r] \ar@{^{(}->}[u] & i\mathfrak{a}^*_{G,V} \ar[r] \ar@{^{(}->}[u] & \mathfrak{a}^* \ar[r] \ar@{=}[u] & 0
  }$$
  où $\mathfrak{a}^*$ signifie le dual de Pontryagin de $\mathfrak{a}$. Pour conclure, il suffit d'appliquer le lemme du serpent et noter le rapport entre les mesures de Haar.

  Le cas pour $\mathcal{M} = \prod_{v \in V} M_v$ est analogue. On doit rappeler les définitions des plongements $\mathfrak{a} \to \mathfrak{a}_{\mathcal{M},V}$ et $\mathfrak{a} \to \mathfrak{a}_{G,V}$ pour montrer que les diagrammes sont commutatifs.
\end{proof}

\begin{remarque}\label{rem:G-M-V-observation}
  Plus généralement, il en résulte que pour $\mathfrak{a} \subset \mathfrak{b}$, on a un isomorphisme canonique
  $$ \frac{i\mathfrak{a}^*_{M,V} \cap \mathfrak{b}^\perp}{i\mathfrak{a}^*_{G,V} \cap \mathfrak{b}^\perp} \rightiso \frac{i\mathfrak{a}_{M,V}^* \cap \mathfrak{a}^\perp}{i\mathfrak{a}^*_{G,V} \cap \mathfrak{a}^\perp} $$
  qui préserve les mesures de Haar.

  On peut également choisir les sections $s'$, $s$ de $\mathfrak{a}_{M,V}/\mathfrak{a} \to \mathfrak{a}_{M,V}/\mathfrak{b}$ et $\mathfrak{a}_{G,V}/\mathfrak{a} \to \mathfrak{a}_{G,V}/\mathfrak{b}$, respectivement, qui rendent commutatif le diagramme suivant.
  $$\xymatrix{
    \mathfrak{a}_{M,V}/\mathfrak{b} \ar[d]_{h_G} \ar[r]^{s'} & \mathfrak{a}_{M,V}/\mathfrak{a} \ar[d]^{h_G} \\
    \mathfrak{a}_{G,V}/\mathfrak{b} \ar[r]_{s} & \mathfrak{a}_{G,V}/\mathfrak{a}
  }$$
  De façon duale:
  $$\xymatrix{
    i\mathfrak{a}^*_{M,V} \cap \mathfrak{b}^\perp & i\mathfrak{a}^*_{M,V} \cap \mathfrak{a}^\perp \ar[l]_{s'^*} \\
    i\mathfrak{a}^*_{G,V} \cap \mathfrak{b}^\perp \ar[u] & i\mathfrak{a}^*_{G,V} \cap \mathfrak{a}^\perp \ar[u] \ar[l]^{s^*}
  }.$$

  Alors $s'^*$ et $s^*$ induisent une application surjective
  $$ \frac{i\mathfrak{a}^*_{M,V} \cap \mathfrak{a}^\perp}{i\mathfrak{a}^*_{G,V} \cap \mathfrak{a}^\perp} \longrightarrow \frac{i\mathfrak{a}^*_{M,V} \cap \mathfrak{b}^\perp}{i\mathfrak{a}^*_{G,V} \cap \mathfrak{b}^\perp} $$
  qui est un isomorphisme préservant les mesures de Haar. En effet, c'est l'inverse de l'isomorphisme précédent.

  Idem si l'on remplace $M$ par $\mathcal{M}$ comme dans ledit lemme.
\end{remarque}

\begin{proposition-fr}\label{prop:res-diagramme}
  L'application $\mathrm{res}$ induit une application linéaire continue $\mathrm{PW}_{\mathrm{ac},\asp}(\tilde{G}_V,A)_\Gamma \to \mathrm{PW}_{\mathrm{ac},\asp}(\tilde{G}_V,B)_\Gamma$. Le diagramme suivant est commutatif.
  $$\xymatrix{
    \mathcal{H}_{\mathrm{ac},\asp}(\tilde{G}_V, A)_\Gamma \ar[r]^{\phi_{\tilde{G}}^A} & \mathrm{PW}_{\mathrm{ac},\asp}(\tilde{G}_V, A)_\Gamma \ar[d]^{\mathrm{res}} \\
    \mathcal{H}_{\mathrm{ac},\asp}(\tilde{G}_V, B)_\Gamma \ar[r]_{\phi_{\tilde{G}}^B} \ar@{^{(}->}[u] & \mathrm{PW}_{\mathrm{ac},\asp}(\tilde{G}_V, B)_\Gamma
  }$$
\end{proposition-fr}
\begin{proof}
  Les assertions sont des tautologies sauf si $V \cap V_\infty \neq \emptyset$, ce que l'on suppose désormais. Vérifions la première assertion. Soit $\phi \in \text{PW}_{\text{ac},\asp}(\tilde{G}_V, A)_\Gamma$. Pour montrer que $\text{res}(\phi) \in \text{PW}_{\text{ac},\asp}(\tilde{G}_V,B)_\Gamma$, on observe d'abord que
  $$ \text{res}(\phi)(\pi_\lambda, Z) = \phi(\pi_\lambda, s(Z)) = \phi(\pi, s(Z)) e^{\angles{\iota(\lambda), s(Z)}}, $$
  où $\pi \in \Pi_{\text{temp},-}(\tilde{G}_V, B)$, $\lambda \in i\mathfrak{a}^*_{G,V} \cap \mathfrak{b}^\perp$, et $\iota$ est duale à la projection $\mathfrak{a}_{G,V}/\mathfrak{a} \to \mathfrak{a}_{G,V}/\mathfrak{b}$. Or cela entraîne que $e^{\angles{\iota(\lambda), s(Z)}}=e^{\angles{s^* \iota(\lambda),Z}} = e^{\angles{\lambda,Z}}$ car $s$ est une section de  $\mathfrak{a}_{G,V}/\mathfrak{a} \to \mathfrak{a}_{G,V}/\mathfrak{b}$. Donc $\text{res}(\phi)$ satisfait à la première condition de la Définition \ref{def:PW-equiv}. La deuxième condition est évidemment satisfaite. Considérons la troisième condition dans ladite définition. Soit $(\mathcal{M},\sigma)$ comme dans \eqref{eqn:phi-sigma} relativement à la donnée centrale $(B,\mathfrak{b})$. On prolonge $s$ en une section $s'$ de $\mathfrak{a}_{\mathcal{M},V}/\mathfrak{a} \twoheadrightarrow \mathfrak{a}_{\mathcal{M},V}/\mathfrak{b}$ et on obtient les homomorphismes duaux $s'^*, s^*$ comme dans la Remarque \ref{rem:G-M-V-observation}.

  Si l'on note par $\iota'$ l'inclusion naturelle $i\mathfrak{a}^*_{\mathcal{M},V} \cap \mathfrak{b}^\perp \hookrightarrow i\mathfrak{a}^*_{\mathcal{M},V} \cap \mathfrak{a}^\perp$, alors $s'^* \circ \iota' = \identity$. On a
  \begin{align*}
    \text{res}(\phi)(\sigma,X) &= \int_{\frac{i\mathfrak{a}^*_{\mathcal{M},V} \cap \mathfrak{b}^\perp}{i\mathfrak{a}^*_{G,V} \cap \mathfrak{b}^\perp}} \phi(\sigma^G_\Lambda, s(h_G(X))) e^{-\angles{\Lambda, X}} \dd\Lambda \\
    & = \int_{\frac{i\mathfrak{a}^*_{\mathcal{M},V} \cap \mathfrak{a}^\perp}{i\mathfrak{a}^*_{G,V} \cap \mathfrak{a}^\perp}} \phi(\sigma^G_{s'^* \Lambda}, s(h_G(X))) e^{-\angles{s'^*\Lambda, X}} \dd\Lambda \\
    & = \int_{\frac{i\mathfrak{a}^*_{\mathcal{M},V} \cap \mathfrak{a}^\perp}{i\mathfrak{a}^*_{G,V} \cap \mathfrak{a}^\perp}} \phi(\sigma^G_{s'^* \Lambda}, h_G(s'(X))) e^{-\angles{\Lambda, s'(X)}} \dd\Lambda
  \end{align*}
  pour tout $\sigma \in \Pi_{\text{temp},-}(\tilde{\mathcal{M}},B)$ et tout $X \in \mathfrak{a}_{G,V}/\mathfrak{b}$.

  L'expression à intégrer ne dépend que de la classe de $\Lambda$ modulo $i\mathfrak{a}^*_{G,V} \cap \mathfrak{a}^\perp$. Or le Lemme \ref{prop:G-M-V-observation} appliqué à $\mathfrak{a}$ et $\mathfrak{b}$ entraîne que l'on peut toujours prendre un représentant dans $i\mathfrak{a}^*_{G,V} \cap \mathfrak{b}^\perp$ de la classe de $\Lambda$; c'est-à-dire $\Lambda$ appartient à l'image de $\iota'$. Donc
  $$ \text{res}(\phi)(\sigma,X) = \int_{\frac{i\mathfrak{a}^*_{\mathcal{M},V} \cap \mathfrak{a}^\perp}{i\mathfrak{a}^*_{G,V} \cap \mathfrak{a}^\perp}} \phi(\sigma^G_\Lambda, h_G(s'(X))) e^{-\angles{\Lambda, s'(X)}} \dd\Lambda = \phi(\sigma, s'(X)). $$
  Cela prouve $\text{res}(\phi) \in \text{PW}_{\text{ac},\asp}(\tilde{G}_V, B)$, ainsi que la continuité de $\text{res}$.

  La commutativité du diagramme résulte de la Proposition \ref{prop:2-variables} et du fait que $\tilde{G}_V^A/A \rightiso \tilde{G}_V^B/B$. On démontrera en détails une généralisation pour les caractères pondérés dans la Proposition \ref{prop:J-res}.
\end{proof}

\subsection{Propriétés de distributions}
On fixe toujours une donnée centrale $(A,\mathfrak{a})$ pour $\tilde{G}_V$. Donnons quelques notions utiles.

\begin{definition-fr}
  Rappelons que $\mathcal{H}_{\asp}(\tilde{G}_V)$ est un $\mathcal{H}_{\asp}(\tilde{G}_V)$-bi-module sous convolution. On dit qu'une forme linéaire $I: \mathcal{H}_{\asp}(\tilde{G}_V, A) \to \C$ est invariante si
  $$ D(h*f) = D(f*h) $$
  pour tous $h \in \mathcal{H}_{\asp}(\tilde{G}_V)$ et $f \in \mathcal{H}_{\asp}(\tilde{G}_V, A)$. Cela généralise la Définition \ref{def:dist-inv}.
\end{definition-fr}

\begin{definition-fr}
  Soient $W$ un $\C$-espace vectoriel et $Z \in \mathfrak{a}_{G,V}/\mathfrak{a}$. On dit qu'une application linéaire $J: \mathcal{H}_{\asp}(\tilde{G}_V, A) \to W$ est concentrée en $Z$ si pour tout $b \in C^\infty(\mathfrak{a}_{G,V}/\mathfrak{a})$ tel que $b(Z)=1$, on a
  $$ J(f^b) = J(f), \quad f \in \mathcal{H}_{\asp}(\tilde{G}_V, A). $$

  Idem pour les espaces $\mathcal{H}_{\text{ac},\asp}(\tilde{G}_V, A)$, $\text{PW}_{\asp}(\tilde{G}_V,A)$, $\text{PW}_{\text{ac},\asp}(\tilde{G}_V,A)$ ou leurs variantes avec l'indice $\Gamma$.
\end{definition-fr}

\begin{proposition-fr}\label{prop:concentration}
  Soit $J: \mathcal{H}_{\asp}(\tilde{G}_V, A) \to W$ une application linéaire concentrée en $Z$. Alors $J$ se prolonge canoniquement à $\mathcal{H}_{\mathrm{ac},\asp}(\tilde{G}_V,A)$ par $J(f) = J(f^b)$ en choisissant $b \in C^\infty_c(\mathfrak{a}_{G,V}/\mathfrak{a})$ tel que $b(Z)=1$. Idem si l'on remplace $\mathcal{H}_{\asp}(\tilde{G}_V,A)$ , $\mathcal{H}_{\mathrm{ac},\asp}(\tilde{G}_V,A)$ par $\mathrm{PW}_{\asp}(\tilde{G}_V,A)$, $\mathrm{PW}_{\mathrm{ac},\asp}(\tilde{G}_V,A)$ ou leurs variantes avec l'indice $\Gamma$, respectivement.

  De plus, si $W$ est un espace vectoriel topologique et $J$ est continue, alors l'application prolongée est aussi continue.
\end{proposition-fr}
\begin{proof}
  Pour montrer que le prolongement est bien défini, il suffit d'observer que $J(f^b)=J(f^{bb'})=J(f^{b'})$ pour tous $b,b' \in C^\infty_c(\mathfrak{a}_{G,V}/\mathfrak{a})$ qui valent $1$ en $Z$. L'assertion de continuité découle du fait que $f \mapsto f^b$ est continue.
\end{proof}

D'après la Proposition \ref{prop:2-variables}, l'application $f \mapsto \phi_{\tilde{G}}(\cdot,Z)$ est un exemple de telles applications linéaires. Les intégrales orbitales le sont aussi; on les discutera dans la section prochaine.

\begin{definition-fr}
  On fixe une application linéaire continue $\phi: W \to W'$ entre des espaces vectoriels topologiques. On dit qu'une application linéaire $J: W \to W_1$ est supportée par $W'$ si $J|_{\Ker(\phi)}=0$.

  Si $\phi$ est surjective, alors $J$ est supportée par $W'$ si et seulement si $J$ se factorise par une application linéaire $W' \to W_1$; si cette condition est satisfaite, on désigne encore par $J$ l'application factorisée $W' \to W_1$. De plus, si $\phi$ est surjective et ouverte alors l'application factorisée est continue si $J$ l'est.
\end{definition-fr}

On s'intéressera principalement aux cas où $\phi$ est $\phi_{\tilde{G}}$ ou l'un de ses variantes. Dans ces cas-là, $\phi$ sera toujours surjective et ouverte.

\begin{lemme}\label{prop:lemma-support}
  Soient $(A,\mathfrak{a})$ et $(B,\mathfrak{b})$ deux données centrales telles que $A \subset B$. Soit $I$ une forme linéaire continue de $\mathcal{H}_{\asp}(\tilde{G}_V, B)$. On désigne par $I_A$ la forme linéaire continue $f^A \mapsto I(f^B)$, où $f^A \in \mathcal{H}_{\asp}(\tilde{G}_V, A)$ et $f^A \mapsto f^B$ est l'application de la Proposition \ref{prop:f-moyenne}.

  Si $I_A$ est supportée par $\mathrm{PW}_{\asp}(\tilde{G}_V, A)$, alors $I$ est supportée par $\mathrm{PW}_{\asp}(\tilde{G}_V, B)$.
\end{lemme}
\begin{proof}
  On regarde $I_A$ comme une forme linéaire continue de $\text{PW}_{\asp}(\tilde{G}_V, A)$. Rappelons que $\mathcal{H}_{\asp}(\tilde{G}_V, A) \to \text{PW}_{\asp}(\tilde{G}_V, A)$ est $B/A$-équivariante. Donc $I_A$ est $B/A$-invariante. Vu le Lemme \ref{prop:phi-coinvariant}, $I_A$ se factorise par $\text{PW}_{\asp}(\tilde{G}_V, B)$. Or cela implique que $I$ est nulle sur le noyau de $\mathcal{H}_{\asp}(\tilde{G}_V, B) \to \text{PW}_{\asp}(\tilde{G}_V, B)$ car $f^A \mapsto f^B$ est surjective.
\end{proof}

\section{Distributions locales invariantes}\label{sec:dists-locales}
Dans cette section, on fixe un ensemble fini $V$ de places de $F$ qui satisfait à la condition d'adhérence dans la Définition \ref{def:adherence}. On étudie un revêtement fini $\rev: \tilde{G}_V \to G(F_V)$ et on fixe une donnée centrale $(A,\mathfrak{a})$ pour $\tilde{G}_V$. Les données auxiliaires $M_0$, $\tilde{K}_V$ et les groupes $\mathfrak{a}_{G,V}$, $i\mathfrak{a}_{G,V}^*$, $\mathfrak{a}^\perp$, etc. sont définies dans \S\ref{sec:PW}.

\subsection{Caractères pondérés unitaires}
Cette sous-section est basée sur \cite[\S 5.7]{Li12b} qui reprend \cite{Ar98}. Remarquons que les caractères pondérés non-unitaires seront indispensable si l'on envisage la stabilisation, mais leur définition est assez pénible à écrire. Nous espérons l'inclure dans un article ultérieur.

\paragraph{Les $(G,M)$-familles spectrales}
Soient $M \in \mathcal{L}(M_0)$, $\pi \in \Pi_{\text{unit},-}(\tilde{M}_V)$, $\lambda \in \mathfrak{a}^*_{M,\C}$. Rappelons que dans \cite[\S 2]{Li12b}, on a défini les opérateurs d'entrelacement
$$ J_{Q|P}(\pi_\lambda): \mathcal{I}_{\tilde{P}}(\pi_\lambda) \to \mathcal{I}_{\tilde{Q}}(\pi_\lambda), \quad P,Q \in \mathcal{P}(M), $$
et les fonctions $\mu$ de Harish-Chandra pour les revêtements, qui sont de la forme
$$ \mu(\pi_\lambda) = \prod_{\alpha \in \Sigma^{\text{red}}_M/\pm 1} \mu_\alpha(\pi_\lambda), \quad \pi \in \Pi_{\text{unit},-}(\tilde{M}_V), $$
où $\Sigma^{\text{red}}_M$ désigne l'ensemble des racines réduites restreintes à $\mathfrak{a}_M$. Ils sont méromorphes en $\lambda$ et se factorisent en des objets définis sur corps locaux selon $\pi = \bigotimes_{v \in V} \pi_v$. De plus, $J_{Q|P}(\pi_\lambda)$ est holomorphe pour $\Re(\lambda)$ assez positif relativement à la chambre associée à $P$. Comme d'habitude, l'induite parabolique normalisée $\mathcal{I}_{\tilde{P}}(\pi_\lambda)$ se réalise sur un espace vectoriel qui ne dépend pas de $\lambda$; l'action de $\tilde{K}_V$ est aussi indépendant de $\lambda$.

Pour $P,Q \in \mathcal{P}(M)$, on pose
\begin{gather}\label{eqn:mu_QP}
  \mu_{Q|P}(\pi_\lambda) := \prod_{\alpha \in \Sigma_P^\text{red} \cap \Sigma_{\bar{Q}}^\text{red}} \mu_\alpha(\pi_\lambda).
\end{gather}
Il ne dépend que de $\{\angles{\lambda,\alpha^\vee} : \alpha \in \Delta_P^\text{red} \cap \Sigma_{\bar{Q}}^\text{red}\}$.

Choisissons $P \in \mathcal{P}(M)$ et introduisons les $(G,M)$-familles suivantes, méromorphes en les variables $\lambda$ et $\Lambda$:
\begin{align*}
  \mu_Q(\Lambda, \pi_\lambda, \tilde{P}) & := \mu_{Q|P}(\pi_\lambda)^{-1} \mu_{Q|P}\left(\pi_{\lambda+\frac{\Lambda}{2}} \right), \\
  \mathcal{J}(\Lambda,\pi_\lambda,\tilde{P}) & := J_{Q|P}(\pi_\lambda)^{-1} J_{Q|P}(\pi_{\lambda+\Lambda}), \\
  \mathcal{M}(\Lambda,\pi_\lambda,\tilde{P}) & := \mu_Q(\Lambda,\pi_\lambda,\tilde{P}) \mathcal{J}(\Lambda,\pi_\lambda,\tilde{P}), \quad \lambda, \Lambda \in \mathfrak{a}^*_{M,\C}.
\end{align*}

La théorie générale des $(G,M)$-familles fournit les opérateurs $\mathcal{M}_M(\pi_\lambda, \tilde{P})$. Indiquons que $(\mu_Q(\Lambda, \pi_\lambda, \tilde{P}))_{Q \in \mathcal{P}(M)}$ est une $(G,M)$-famille dite radicielle, ce qui est étudiée dans \cite[\S 7]{Ar82-Eis2}.

\paragraph{Caractères pondérés}
\begin{definition-fr}
  Soient $f \in \mathcal{H}_{\asp}(\tilde{G}_V, A)$, $\pi \in \Pi_{\text{unit},-}(\tilde{M}_V, A)$. Le caractère pondéré canoniquement normalisé de $\pi$ est défini comme
  \begin{gather}
    J_{\tilde{M}}(\pi, f) := \Tr\left( \mathcal{M}_M(\pi, \tilde{P}) \mathcal{I}_{\tilde{P}}(\pi, f) \right).
  \end{gather}
  Ici, $\mathcal{I}_{\tilde{P}}(\pi, f)$ est défini comme $\int_{\tilde{G}_V/A} f(\tilde{x})\mathcal{I}_{\tilde{P}}(\pi,\tilde{x}) \dd\tilde{x}$.
\end{definition-fr}
La trace ci-dessus se calcule sur un espace vectoriel de dimension finie, car nous avons supposé que $f$ est $\tilde{K}_V$-finie par translation bilatérale. A priori, $J_{\tilde{M}}(\pi, f)$ est défini pour $\pi$ en position générale dans une $i\mathfrak{a}^*_{M,V}$-orbite, et le choix de $P$ y intervient. La définition est justifiée par le résultat suivant.

\begin{theoreme}\label{prop:caractere-pondere-prop}
  Les caractères pondérés ci-dessus sont bien définis indépendamment du choix de $P$. L'application $\lambda \mapsto J_{\tilde{M}}(\pi_\lambda, f)$ où $\lambda \in i\mathfrak{a}^*_{M,V}$ est analytique et à décroissance rapide.
\end{theoreme}
\begin{proof}
  L'analycité est démontrée dans \cite[Proposition 2.3]{Ar98}. L'assertion de décroissance résulte du fait qu'en tant que fonctions en $\lambda$, c'est connu que les coefficients matriciels de $\mathcal{I}_{\tilde{P}}(\pi_\lambda, f)$ sont à décroissance rapide, tandis que ceux $\mathcal{M}_{\tilde{M}}(\pi_\lambda,\tilde{P})$ sont à croissance modérée. En effet, cela est déjà démontré dans \cite[Lemma 2.1]{Ar94} si l'on utilise l'opérateur $\mathcal{R}_{\tilde{M}}(\pi_\lambda,\tilde{P})$ défini par des facteurs normalisants au lieu de $\mathcal{M}_{\tilde{M}}(\pi_\lambda,\tilde{P})$. Pour passer à $\mathcal{M}_{\tilde{M}}(\pi_\lambda,\tilde{P})$, on aura besoin des facteurs $r^G_M(\pi_\lambda)$ et leurs propriétés analytiques comme dans la preuve \cite[Lemma 3.1]{Ar94}; on en reproduira la preuve dans le Lemme \ref{prop:majoration-r^G_M}.
\end{proof}
Lorsque $M=G$, on retrouve les caractères usuels.

\begin{remarque}
  On se limite au cas où $\pi$ est unitaire. Le cas général est considérablement plus compliqué, cf. \cite[\S 7]{Ar89-IOR1}. Il faut aussi prendre garde qu'Arthur utilise les facteurs normalisants pour définir les caractères pondérés dans \cite{Ar89-IOR1}, tandis que nous utilisons la version ``canoniquement normalisée'' à l'aide de fonctions $\mu$. Cependant, on peut facilement passer de l'un à l'autre grâce à \cite[Lemma 2.1 et Corollary 2.4]{Ar98}.
\end{remarque}

On considère les coefficients de Fourier des caractères pondérés comme suit.
\begin{definition-fr}
  Soit $f \in \mathcal{H}_{\asp}(\tilde{G}_V, A)$. Vu la décroissance rapide de $\lambda \mapsto J_{\tilde{M}}(\pi_\lambda, f)$, on peut poser
  $$ J_{\tilde{M}}(\pi,X,f) := \int_{i\mathfrak{a}^*_{M,V} \cap \mathfrak{a}^\perp} J_{\tilde{M}}(\pi_\lambda, f) e^{-\angles{\lambda, X}} \dd\lambda, \quad X \in \mathfrak{a}_{M,V}/\mathfrak{a} . $$
\end{definition-fr}
On voit que $J_{\tilde{M}}(\pi_\lambda,X,\cdot) = J_{\tilde{M}}(\pi,X,\cdot) e^{\angles{\lambda,X}}$.

\begin{proposition-fr}\label{prop:Fourier-restriction}
  Soient $f^A \in \mathcal{H}_{\asp}(\tilde{G}_V, A)$, $\pi \in \Pi_{\mathrm{unit},-}(\tilde{M}_V, A)$ et $X \in \mathfrak{a}_{M,V}/\mathfrak{a}$. On a
  $$ J_{\tilde{M}}(\pi,X,f^A) = \int_{\frac{i\mathfrak{a}^*_{M,V} \cap \mathfrak{a}^\perp}{i\mathfrak{a}^*_{G,V} \cap \mathfrak{a}^\perp}} \Tr \left( \mathcal{M}_M(\tilde{\pi}_\lambda, \tilde{P}) \mathcal{I}_{\tilde{P}}(\pi_\lambda, f^A|_{\tilde{G}^{h_G(X)+\mathfrak{a}}_V/A}) \right) e^{-\angles{\lambda,X}} \dd\lambda, $$
  où $P \in \mathcal{P}(M)$ et
  $$ \mathcal{I}_{\tilde{P}}\left( \pi_\lambda, f^A|_{\tilde{G}^{h_G(X)+\mathfrak{a}}_V/A} \right) := \int_{\tilde{G}^{h_G(X)+\mathfrak{a}}_V/A} f^A(\tilde{x}) \mathcal{I}_{\tilde{P}}(\pi_\lambda, \tilde{x})\dd\tilde{x}. $$
\end{proposition-fr}
\begin{proof}
  En déroulant les définitions, $J_{\tilde{M}}(\pi, X, f^A)$ est égal à
  $$ \int_{i\mathfrak{a}^*_{G,V} \cap \mathfrak{a}^\perp} \; \Tr \int_{\tilde{G}_V/A} f(\tilde{x}) \mathcal{M}_M(\pi_\lambda, \tilde{P}) \mathcal{I}_{\tilde{P}}(\pi_\lambda, \tilde{x}) e^{-\angles{\lambda,X}} \dd\tilde{x}\dd\lambda. $$

  Lorsque $\lambda$ est remplacé par $\lambda+\mu$ où $\mu \in i\mathfrak{a}^*_{G,V} \cap \mathfrak{a}^\perp$, on a
  \begin{align*}
    \mathcal{M}_M(\pi_{\lambda+\mu}, \tilde{P}) & = \mathcal{M}_M(\pi_\lambda, \tilde{P}),\\
    \mathcal{I}_{\tilde{P}}(\pi_{\lambda+\mu}, \tilde{x}) & = \mathcal{I}_{\tilde{P}}(\pi_\lambda, \tilde{x}) e^{\angles{\mu, H_G(x)}}, \\
    e^{-\angles{\lambda+\mu, X}} &= e^{-\angles{\lambda,X}} e^{\angles{\mu,-X}} \\
    & = e^{-\angles{\lambda,X}} e^{\angles{\mu, -h_G(X)}}.
  \end{align*}
  L'inversion de Fourier sur $i\mathfrak{a}^*_{G,V} \cap \mathfrak{a}^\perp$ nous ramène à l'expression suivante
  $$ \int_{\frac{i\mathfrak{a}^*_{M,V} \cap \mathfrak{a}^\perp}{i\mathfrak{a}^*_{G,V} \cap \mathfrak{a}^\perp}} \Tr \left(\; \int_{\tilde{G}^{h_G(X)+\mathfrak{a}}_V/A} f(\tilde{x}) \mathcal{M}_M(\pi_\lambda, \tilde{P}) \mathcal{I}_{\tilde{P}}(\pi_\lambda, \tilde{x}) \dd\tilde{x} \right) e^{-\angles{\lambda,X}} \dd\lambda, $$
  ce qu'il fallait démontrer.
\end{proof}

\begin{corollaire}
  La forme linéaire $f \mapsto J_{\tilde{M}}(\pi,X,f)$ est concentrée en $h_G(X)$. Plus précisément, pour tout $b \in C^\infty(\mathfrak{a}_{G,V}/\mathfrak{a})$,
  \begin{gather}\label{eqn:J-concentration}
    J_{\tilde{M}}(\pi,X, f^b) = J_{\tilde{M}}(\pi,X,f) b(h_G(X)).
  \end{gather}
  Par conséquent, $J_{\tilde{M}}(\pi,X,\cdot)$ se prolonge canoniquement à $\mathcal{H}_{\mathrm{ac},\asp}(\tilde{G}_V, A)$.
\end{corollaire}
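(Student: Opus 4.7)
Le plan est d'invoquer directement la Proposition \ref{prop:Fourier-restriction}, qui exprime $J_{\tilde{M}}(\pi,X,f)$ comme la transformée de Fourier, sur le quotient $\frac{i\mathfrak{a}^*_{M,V} \cap \mathfrak{a}^\perp}{i\mathfrak{a}^*_{G,V} \cap \mathfrak{a}^\perp}$, d'une trace faisant intervenir uniquement la restriction $f|_{\tilde{G}_V^{h_G(X)+\mathfrak{a}}/A}$. L'observation clé est que $b$ est une fonction sur $\mathfrak{a}_{G,V}/\mathfrak{a}$, donc constante sur chaque classe modulo $\mathfrak{a}$; ainsi pour tout $\tilde{x} \in \tilde{G}_V^{h_G(X)+\mathfrak{a}}$ on a $H_{\tilde{G}}(\tilde{x}) \in h_G(X)+\mathfrak{a}$, d'où $b(H_{\tilde{G}}(\tilde{x})) = b(h_G(X))$.

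Concrètement, puisque $f^b(\tilde{x}) = f(\tilde{x}) b(H_{\tilde{G}}(\tilde{x}))$, la remarque précédente donne
$$ f^b\big|_{\tilde{G}_V^{h_G(X)+\mathfrak{a}}/A} \;=\; b(h_G(X))\cdot f\big|_{\tilde{G}_V^{h_G(X)+\mathfrak{a}}/A}. $$
En insérant cette identité dans la formule de la Proposition \ref{prop:Fourier-restriction}, la constante $b(h_G(X))$ sort de l'intégrale et de la trace par linéarité, ce qui donne \eqref{eqn:J-concentration}. Il n'y a aucun problème de convergence à vérifier, car on réutilise telle quelle l'expression déjà convergente fournie par ladite proposition.

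Pour le prolongement à $\mathcal{H}_{\mathrm{ac},\asp}(\tilde{G}_V,A)$, il suffit d'appliquer la Proposition \ref{prop:concentration} au point $Z := h_G(X) \in \mathfrak{a}_{G,V}/\mathfrak{a}$: pour $f \in \mathcal{H}_{\mathrm{ac},\asp}(\tilde{G}_V,A)$, on choisit $b \in C^\infty_c(\mathfrak{a}_{G,V}/\mathfrak{a})$ avec $b(h_G(X))=1$, ce qui fait entrer $f^b$ dans $\mathcal{H}_{\asp}(\tilde{G}_V,A)$, et on pose $J_{\tilde{M}}(\pi,X,f) := J_{\tilde{M}}(\pi,X,f^b)$. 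L'indépendance vis-à-vis du choix de $b$ résulte formellement de \eqref{eqn:J-concentration} appliquée à $bb'$ pour deux fonctions $b,b'$ valant $1$ en $h_G(X)$.

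La démonstration ne présente donc pas d'obstacle réel; toute la substance analytique (à savoir la localisation de la contribution à la strate $\tilde{G}_V^{h_G(X)+\mathfrak{a}}$) est déjà contenue dans la Proposition \ref{prop:Fourier-restriction}, et l'existence du prolongement à la version ``ac'' découle tautologiquement du formalisme mis en place dans la Proposition \ref{prop:concentration}.
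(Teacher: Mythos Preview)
Your proof is correct and follows exactly the approach the paper intends: the identity \eqref{eqn:J-concentration} is an immediate consequence of Proposition~\ref{prop:Fourier-restriction} (since $f^b$ and $b(h_G(X))f$ agree on $\tilde{G}_V^{h_G(X)+\mathfrak{a}}/A$), and the extension to $\mathcal{H}_{\mathrm{ac},\asp}(\tilde{G}_V,A)$ is then Proposition~\ref{prop:concentration}. The paper's proof is a single sentence invoking Proposition~\ref{prop:concentration}; you have simply unpacked the (implicit) first step.
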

\begin{proof}
  C'est dû à la Proposition \ref{prop:concentration}.
\end{proof}

\begin{proposition-fr}\label{prop:J-moyenne}
  Soient $(A,\mathfrak{a})$ et $(B,\mathfrak{b})$ deux données centrales telles que $A \subset B$. Alors on a
  $$ J_{\tilde{M}}(\pi, X, f^B) = \int_{\mathfrak{b}/\mathfrak{a}} J_{\tilde{M}}(\pi, X+Y, f^A) \dd Y, \quad \pi \in \Pi_{\mathrm{unit},-}(\tilde{M}_V, B), X \in \mathfrak{a}_{M,V}/\mathfrak{b}, $$
  pour tout $f^A \in \mathcal{H}_{\asp}(\tilde{G}_V, A)$ et $f^A \mapsto f^B$ par l'application de la Proposition \ref{prop:f-moyenne}.
\end{proposition-fr}
\begin{proof}
  On vérifie que
  $$ \mathcal{I}_{\tilde{P}}(\pi_\lambda, f^B|_{\tilde{G}^{h_G(X)+\mathfrak{b}}_V/A}) = \int_{\mathfrak{b}/\mathfrak{a}} \mathcal{I}_{\tilde{P}}(\pi_\lambda, f^A|_{\tilde{G}^{X+Y+\mathfrak{a}}_V/A}) \dd Y $$
  en se rappelant la définition de $f^A \mapsto f^B$.

  D'autre part, d'après le Lemme \ref{prop:G-M-V-observation} on a
  $$ \frac{i\mathfrak{a}^*_{M,V} \cap \mathfrak{a}^\perp}{i\mathfrak{a}^*_{G,V} \cap \mathfrak{a}^\perp} \rightiso \frac{i\mathfrak{a}^*_{M,V}}{i\mathfrak{a}^*_{G,V}} \leftiso \frac{i\mathfrak{a}^*_{M,V} \cap \mathfrak{b}^\perp}{i\mathfrak{a}^*_{G,V} \cap \mathfrak{b}^\perp}. $$

  L'assertion résulte alors des formules données par la Proposition \ref{prop:Fourier-restriction} pour $f^A$ et $f^B$.
\end{proof}

\paragraph{Version à support presque compact}
On vient de voir que $J_{\tilde{M}}(\pi,X,\cdot)$ peut être défini sur $\mathcal{H}_{\text{ac},\asp}(\tilde{G}_V,A)$. Donnons une propriété de $J_{\tilde{M}}(\pi,X,\cdot)$ qui intervient lorsque l'on traite les fonctions à support presque compact.

Considérons la situation suivante. Soient $(A,\mathfrak{a})$ et $(B,\mathfrak{b})$ deux données centrales. On choisit des sections $s'$ et $s$ des projections $\mathfrak{a}_{M,V}/\mathfrak{a} \to \mathfrak{a}_{M,V}/\mathfrak{b}$ et $\mathfrak{a}_{G,V}/\mathfrak{a} \to \mathfrak{a}_{G,V}/\mathfrak{b}$ qui satisfont les conditions dans la Remarque \ref{rem:G-M-V-observation}, et on note $s'^*$, $s^*$ leurs duaux.

Soit $f_B \in \mathcal{H}_{\text{ac},\asp}(\tilde{G}_V,B)$, son image sous l'inclusion naturelle $\mathcal{H}_{\text{ac},\asp}(\tilde{G}_V, B) \to \mathcal{H}_{\text{ac},\asp}(\tilde{G}_V,A)$ est notée $f_A$. On l'écrit $f_B \mapsto f_A$.

\begin{proposition-fr}\label{prop:J-res}
  Soit $f_B \in \mathcal{H}_{\text{ac},\asp}(\tilde{G}_V,B)$. Alors
  $$ J_{\tilde{M}}(\pi, X, f_B) = J_{\tilde{M}}(\pi, s'(X), f_A), \quad \pi \in \Pi_{\mathrm{unit},-}(\tilde{G}_V, B), X \in \mathfrak{a}_{M,V}/\mathfrak{b}, $$
  si $f_B \mapsto f_A$ sous l'inclusion naturelle.
\end{proposition-fr}
\begin{proof}
  Cette démonstration est analogue à celle de la Proposition \ref{prop:res-diagramme}. Le Lemme \ref{prop:G-M-V-observation} entraîne que $s'^*$ et $s^*$ induisent un isomorphisme
  $$ \frac{i\mathfrak{a}^*_{M,V} \cap \mathfrak{a}^\perp}{i\mathfrak{a}^*_{G,V} \cap \mathfrak{a}^\perp} \rightiso \frac{i\mathfrak{a}^*_{M,V} \cap \mathfrak{b}^\perp}{i\mathfrak{a}^*_{G,V} \cap \mathfrak{b}^\perp}. $$

  En appliquant la Proposition \ref{prop:Fourier-restriction} à $f_B$ et en utilisant l'isomorphisme ci-dessus, on obtient
  $$ J_{\tilde{M}}(\pi, X, f_B) = \int_{\frac{i\mathfrak{a}^*_{M,V} \cap \mathfrak{a}^\perp}{i\mathfrak{a}^*_{G,V} \cap \mathfrak{a}^\perp}} \Tr \left( \mathcal{M}_M(\pi_{s'^*(\lambda)}, \tilde{P}) \mathcal{I}_{\tilde{P}}(\pi_{s'^*(\lambda)}, f_B|_{\tilde{G}^{h_G(X)+\mathfrak{b}}_V/B}) \right) e^{-\angles{s'^*(\lambda), X}} \dd\lambda . $$

  Toujours d'après le Lemme \ref{prop:G-M-V-observation}, on peut choisir un représentant dans $i\mathfrak{a}^*_{M,V} \cap \mathfrak{b}^\perp$ de la classe de $\lambda$. Alors $s'^*(\lambda)=\lambda$. D'autre part,
  \begin{align*}
    e^{-\angles{s'^*(\lambda),X}} &= e^{-\angles{\lambda, s'(X)}},\\
    \mathcal{I}_{\tilde{P}}\left( \pi_{s'^*(\lambda)}, f_B|_{\tilde{G}^{h_G(X)+\mathfrak{b}}_V/B} \right) &= \mathcal{I}_{\tilde{P}}\left( \pi_{s'^*(\lambda)}, f_A|_{\tilde{G}^{h_G(s'(X))+\mathfrak{a}}_V/A} \right),
  \end{align*}
  car $\tilde{G}^{h_G(s'(X))+\mathfrak{a}}_V/A = \tilde{G}^{s(h_G(X))+\mathfrak{a}}_V/A \rightiso \tilde{G}^{h_G(X)+\mathfrak{b}}_V/B$ par l'application quotient. On arrive ainsi à la formule de $J_{\tilde{M}}(\pi, s'(X), f_A)$ fournie par la Proposition \ref{prop:Fourier-restriction}.
\end{proof}

\begin{remarque}\label{rem:J-continuite}
  Étant donnés $(\pi,X)$, les formes linéaires $f \mapsto J_{\tilde{M}}(\pi,f)$ et $f \mapsto J_{\tilde{M}}(\pi,X,f)$ sont continues sur $\mathcal{H}_{\asp}(\tilde{G}_V, A)$. Fixons $\Gamma$, $N$ et supposons que $f \in \mathcal{H}_{\asp}(\tilde{G}_V, A)_{N,\Gamma}$, alors l'image de $\mathcal{I}_{\tilde{P}}(\pi_\lambda, f)$ appartient à un espace vectoriel de dimension finie indépendant de $\lambda$. La continuité est une conséquence des faits suivants
  \begin{itemize}
    \item les coefficients matriciels de $\mathcal{M}_M(\pi_\lambda, f)$ sont à croissance modérée en $\lambda \in i\mathfrak{a}^*_M$;
    \item pour tout $n \in \Z_{>0}$, les coefficients matriciels de $\mathcal{I}_{\tilde{P}}(\pi_\lambda, f)$ (avec $\lambda \in i\mathfrak{a}^*_M$) admettent une majoration par
      $$ p_n(f) (1+\|\lambda\|)^{-n} $$
      où $\|\cdot\|$ est une norme euclidienne quelconque sur $\mathfrak{a}^*_{M,\C}$ et $p_n(f)$ est une semi-norme continue de $\mathcal{H}_{\asp}(\tilde{G}_V, A)_{N,\Gamma}$.
  \end{itemize}
  Cf. \cite[(12.7)]{Ar89-IOR1}.
\end{remarque}

\subsection{Intégrales orbitales pondérées}
Rappelons brièvement la définition des intégrales orbitales pondérées anti-spécifiques dans \cite[\S 6.3]{Li14a}. Soient
\begin{itemize}
  \item $M \in \mathcal{L}(M_0)$,
  \item $\tilde{\gamma} \in \Gamma(\tilde{M}_V)$,
  \item $D^M: M(F_V) \to F_V$ le discriminant de Weyl \cite[Définition 5.6.1]{Li14a},
  \item $v_M: P(F_V) \backslash G(F_V)/K_V \to \R_{\geq 0}$, la fonction poids d'Arthur.
\end{itemize}
Selon notre convention dans \S\ref{sec:dists}, l'orbite $\tilde{\gamma}$ est déjà munie d'une mesure invariante. On prend un représentant dans la classe de $\tilde{\gamma}$, noté par le même symbole, et on désigne par $G_\gamma$ (resp. $M_\gamma$) le commutant connexe de $\gamma$ dans $G$ (resp. dans $M$). Alors $M_\gamma(F_V)$ est muni de la mesure de Haar correspondante à la mesure invariante choisie sur l'orbite.

Supposons d'abord que $M_\gamma=G_\gamma$, on pose
\begin{gather}\label{eqn:int-orb-ponderee}
  J_{\tilde{M}}(\tilde{\gamma}, f) := |D^M(\gamma)|^{\frac{1}{2}} \int_{G_\gamma(F_V) \backslash G(F_V)} f(x^{-1}\tilde{\gamma}x) v_M(x) \dd x
\end{gather}
pour tout $f \in C^\infty_{c,\asp}(\tilde{G}_V/A)$. Cette intégrale est bien définie car $A \subset G_\gamma(F_V)$ pour tout $\gamma$.

En général, il existe des facteurs $s^L_M(\gamma,a)$, où $L \in \mathcal{L}(M)$ et $a$ parcourt des éléments dans $A_M(F_V)$ en position générale de sorte que $M_{a\gamma}=G_{a\gamma}$, qui permettent de définir
\begin{gather}\label{eqn:int-orb-ponderee-gen}
  J_{\tilde{M}}(\tilde{\gamma}, f) := \lim_{\substack{a \to 1 \\ a \in A_M(F_V)}} \sum_{L \in \mathcal{L}(M)} s^L_M(\gamma,a) J_{\tilde{L}}(a\tilde{\gamma}, f).
\end{gather}
On renvoie à \cite[Théorème 6.3.2]{Li14a} pour les détails. On vérifie que
$$ J_{\tilde{M}}(z\tilde{\gamma},\cdot)=J_{\tilde{M}}(\tilde{\gamma},\cdot), \quad z \in A. $$

\begin{proposition-fr}
  La forme linéaire $f \mapsto J_{\tilde{M}}(\tilde{\gamma}, f)$ de $\mathcal{H}_{\asp}(\tilde{G}_V, A)$ est continue et concentrée en $H_G(\gamma)$. En particulier, elle se prolonge canoniquement en une forme linéaire continue sur $\mathcal{H}_{\mathrm{ac},\asp}(\tilde{G}_V, A)$.
\end{proposition-fr}
\begin{proof}
  C'est connu que $J_{\tilde{M}}(\tilde{\gamma},\cdot)$ définit une mesure absolument continue par rapport à la mesure invariante sur l'orbite induite $\tilde{\gamma}^G$, dont la construction sera rappelée dans la Définition \ref{def:induction-orbites}; cf. \cite[Corollary 6.2]{Ar88LB} ou \cite[Proposition 5.3.7]{Li14a}. Le fait crucial établi dans loc. cit. est que toute fonction dans $C^\infty_c(\tilde{G}_V/A)$ est intégrable pour cette mesure. Ladite mesure n'est pas positive en générale. 

  Montrons la concentration. Pour tout $\tilde{\delta}$ dans la classe $\tilde{\gamma}^G$, la définition d'induction entraîne que $H_{\tilde{G}}(\tilde{\delta}) = H_G(\gamma)$, donc $J_{\tilde{M}}(\tilde{\gamma}, \cdot)$ est concentrée en $H_G(\gamma)$.

  Montrons ensuite la continuité. Il faut que $J_{\tilde{M}}(\tilde{\gamma}, \cdot)$ soit continue sur $\mathcal{H}_{\asp}(\tilde{G}_V, A)_{N,\Gamma}$ pour tous $N$, $\Gamma$ (voir \S\ref{sec:PW-fonctions} pour ces notations; en fait, $\Gamma$ n'intervient pas ici). Fixons $N$, $\Gamma$ et prenons $\varphi \in C^\infty_c(G(F_V)/A)$ telle que $\varphi \geq 0$ et
  $$ \varphi = 1 \quad \text{ sur } \{x \in G(F_V) : \log\|x\| \leq N \} \cdot A. $$

  Soit $f \in \mathcal{H}_{\asp}(\tilde{G}_V, A)_{N,\Gamma}$, alors $f = (\varphi \circ \rev) f$. On vient de remarquer que $J_{\tilde{M}}(\tilde{\gamma},\cdot)$ définit une mesure sur $\tilde{\gamma}^G$, notée $\mu$. Rappelons que $f \in L^1(\mu)=L^1(|\mu|)$. Donc on a
  $$ |\mu(f)| \leq |\mu|(|f|) \leq \sup|f| \cdot |\mu|(\varphi). $$
  Or on a aussi $\varphi \in L^1(|\mu|)$. Cela démontre la continuité de $J_{\tilde{M}}(\tilde{\gamma}, \cdot)$.
\end{proof}

En fait, la forme prolongée à $\mathcal{H}_{\text{ac},\asp}(\tilde{G}_V, A)$ sont définies par les mêmes formules \eqref{eqn:int-orb-ponderee} et \eqref{eqn:int-orb-ponderee-gen}. Les résultats suivants sont des conséquences immédiates de lesdites formules.

Indiquons que la condition d'adhérence pour $V$ n'intervient pas dans la définition de $J_{\tilde{M}}(\tilde{\gamma},\cdot)$

\begin{proposition-fr}
  Soient $(A,\mathfrak{a})$, $(B,\mathfrak{b})$ deux données centrales telles que $A \subset B$. Pour $f^A \in \mathcal{H}_{\asp}(\tilde{G}_V, A)$, $f^A \mapsto f^B \in \mathcal{H}_{\asp}(\tilde{G}_V, B)$ par l'application de la Proposition \ref{prop:f-moyenne}, on a
  $$ J_{\tilde{M}}(\tilde{\gamma}, f^B) = \int_{B/A} J_{\tilde{M}}(z\tilde{\gamma}, f^A) \dd z, \quad \tilde{\gamma} \in \Gamma(\tilde{M}_V). $$
\end{proposition-fr}

\begin{proposition-fr}\label{prop:orbint-res}
  Soient $(A,\mathfrak{a})$, $(B,\mathfrak{b})$ comme ci-dessus. Soit $f_B \in \mathcal{H}_{\mathrm{ac},\asp}(\tilde{G}_V, B)$, son image dans $\mathcal{H}_{\mathrm{ac},\asp}(\tilde{G}_V, A)$ sous l'inclusion naturelle est notée $f_A$. Alors
  $$ J_{\tilde{M}}(\tilde{\gamma}, f_B) = J_{\tilde{M}}(\tilde{\gamma}, f_A), \quad \tilde{\gamma} \in \Gamma(\tilde{M}_V). $$
\end{proposition-fr}

\subsection{L'application $\phi_{\tilde{M}}$}
Conservons les notations dans la sous-section précédente. On définit une application linéaire
\begin{align*}
  \phi_{\tilde{M}}: \mathcal{H}_{\text{ac},\asp}(\tilde{G}_V, A) & \longrightarrow \left\{ \text{fonctions } \Pi_{\text{temp},-}(\tilde{M}_V, A) \times \frac{\mathfrak{a}_{M,V}}{\mathfrak{a}} \to \C \right\} \\
  f & \longmapsto \left[ (\pi,X) \mapsto J_{\tilde{M}}(\pi, X, f) \right].
\end{align*}

\begin{theoreme}\label{prop:phi-M}
  Si l'Hypothèse \ref{hyp:PW} est satisfaite pour toute donnée centrale, alors $\phi_{\tilde{M}}$ induit une application linéaire continue
  $$ \phi_{\tilde{M}}: \mathcal{H}_{\mathrm{ac},\asp}(\tilde{G}_V, A) \to I\mathcal{H}_{\mathrm{ac},\asp}(\tilde{M}_V, A)$$
  pour toute donnée centrale $(A,\mathfrak{a})$ de $\tilde{G}_V$.
\end{theoreme}
Signalons qu'il en résulte que $f \mapsto J_{\tilde{M}}(\pi,X,f)$ est continue pour tout $(\pi,X)$, car l'évaluation en $(\pi,X)$ est une forme linéaire continue de $I\mathcal{H}_{\text{ac},\asp}(\tilde{M}_V, A)$.

\begin{proof}
  Quand $A=\{1\}$, c'est le cas traité par Arthur dans \cite[Theorem 12.1]{Ar89-IOR1}. La démonstration se fait par une étude approfondie des résidus de caractères pondérés; nous ne la répétons pas ici. En général, soit $(A',\mathfrak{a'})$ une donnée centrale telle que $A' \subset A$ et on suppose que l'assertion est prouvée pour $A'$. On choisit des données auxiliaires comme dans la Remarque \ref{rem:G-M-V-observation} pour définir l'application $\text{res}$; on considère le diagramme
  $$\xymatrix{
    \mathcal{H}_{\text{ac},\asp}(\tilde{G}_V, A') \ar[r] & I\mathcal{H}_{\text{ac},\asp}(\tilde{G}_V, A') \ar[d]^{\text{res}} \\
    \mathcal{H}_{\text{ac},\asp}(\tilde{G}_V, A) \ar[r] \ar@{^{(}->}[u] & \left\{ \text{fonctions } \Pi_{\text{temp},-}(\tilde{M}_V, A) \times \frac{\mathfrak{a}_{M,V}}{\mathfrak{a}} \to \C \right\}
  }$$
  dont les flèches horizontales sont les $\phi_{\tilde{M}}$ pour $A'$ et $A$. D'après la Proposition \ref{prop:J-res}, les définitions de $\phi_{\tilde{M}}$ et celle de $\text{res}$, ce diagramme est commutatif. D'autre part, la Proposition \ref{prop:res-diagramme} affirme que le but de $\text{res}$ peut être remplacé par $I\mathcal{H}_{\text{ac},\asp}(\tilde{G}_V, A) = \text{PW}_{\text{ac},\asp}(\tilde{G}_V, A)$. Maintenant on voit que $\phi_{\tilde{M}}: \mathcal{H}_{\mathrm{ac},\asp}(\tilde{G}_V, A) \to I\mathcal{H}_{\text{ac},\asp}(\tilde{M}_V, A)$ est la composée de trois applications linéaires continues. L'assertion en découle en prenant $A'=\{1\}$.
\end{proof}

Observons que pour $f \in \mathcal{H}_{\asp}(\tilde{G}_V, A)$, son image $\phi_{\tilde{M}}(f)$ dans $I\mathcal{H}_{\text{ac},\asp}(\tilde{M}_V, A)$ vérifie
$$ \phi_{\tilde{M}}(f)(\cdot,X) = 0 \quad \text{ pour } \|h_G(X)\| \gg 0 $$
où $\|\cdot\|$ est une norme quelconque de $\mathfrak{a}_{G,V}/\mathfrak{a}$, car la Proposition \ref{prop:Fourier-restriction} entraîne que $\phi_{\tilde{M}}(f)(\cdot,X)=0$ si $h_G(X)$ n'appartient pas à la projection via $H_{\tilde{G}}$ de $\Supp(f)$ sur $\mathfrak{a}_{G,V}/\mathfrak{a}$, ce qui est compact. Notons $I\mathcal{H}_{\text{ac},\asp}(\tilde{M}_V, A)^{G-\text{cpt}}$ le sous-espace des élément qui vérifient ladite propriété.
%De façon parallèle, on définit $\mathcal{H}_{\text{ac},\asp}(\tilde{M}_V,A)^{G-\text{cpt}}$ comme l'espace de $f \in \mathcal{H}_{\text{ac},\asp}(\tilde{M}_V, A)$ telle que
%\begin{gather}\label{eqn:G-cpt}
%  f(\tilde{x})=0 \text{ pour } \|H_G(x))\| \gg 0.
%\end{gather}
%Alors $\phi_{\tilde{M}}$ se restreint en une application
%$$ \mathcal{H}_{\text{ac},\asp}(\tilde{M}_V,A)^{G-\text{cpt}} \to I\mathcal{H}_{\text{ac},\asp}(\tilde{M}_V,A)^{G-\text{cpt}}.$$

Pour $\phi^A \in I\mathcal{H}_{\text{ac},\asp}(\tilde{M}_V, A)^{G-\text{cpt}}$ et une donnée centrale $(B,\mathfrak{b})$ de $\tilde{G}_V$, l'application $\phi^A \mapsto \phi^B$ de la Proposition \ref{prop:phi-moyenne} est toujours bien définie.

\begin{proposition-fr}\label{prop:phi-M-moyenne}
  Soient $(A,\mathfrak{a})$ et $(B,\mathfrak{b})$ deux données centrales de $\tilde{G}_V$ telles que $A \subset B$, alors
  $$ \phi_{\tilde{M}}(f^B) = (\phi_{\tilde{M}}(f^A))^B $$
  pour toute $f^A \in \mathcal{H}_{\asp}(\tilde{G}_V, A)$, et $f^A \mapsto f^B$ est l'application de la Proposition \ref{prop:f-moyenne}.
\end{proposition-fr}
\begin{proof}
  C'est une conséquence immédiate de la Proposition \ref{prop:J-moyenne}.
\end{proof}

\begin{proposition-fr}\label{prop:phi-M-res}
  Soient $(A',\mathfrak{a})$ et $(A,\mathfrak{a})$ comme ci-dessus. Soit $f_B \in \mathcal{H}_{\mathrm{ac},\asp}(\tilde{G}_V, B)$ et on désigne par $f_A$ son image dans $\mathcal{H}_{\mathrm{ac},\asp}(\tilde{G}_V, A)$ sous l'inclusion naturelle. Alors on a
  $$ \phi_{\tilde{M}}(f_B) = \mathrm{res}(\phi_{\tilde{M}}(f_A)). $$
\end{proposition-fr}
\begin{proof}
  Afin de définir l'application $\text{res}$, il faut fixer des données auxiliaires comme dans la Remarque \ref{rem:G-M-V-observation}. Ensuite, l'assertion résulte immédiatement de la Proposition \ref{prop:J-res}.
\end{proof}

\subsection{Non-invariance}
Dans cette sous-section, on enregistre un résultat standard concernant le comportement des distributions pondérées par rapport à la conjugaison, ou plus précisément par rapport à la convolution. C'est ce qui permettra de fabriquer les distributions invariantes.

Fixons une donnée centrale $(A,\mathfrak{a})$ de $\tilde{G}_V$. Soient $Q = M_Q U_Q \in \mathcal{F}(M)$, $y \in G(F_V)$ et $f \in \mathcal{H}_{\asp}(\tilde{G}_V,A)$. La fonction module de $Q$ est notée $\delta_Q: Q(F_S) \to \R_{>0}$. Posons
\begin{gather}\label{eqn:descente-y}
  f_{\tilde{Q},y}(\tilde{m}) := \delta_Q(m)^{\frac{1}{2}} \iint_{\tilde{K}_V \times U_Q(\A)} f(k^{-1}\tilde{m}uk) u'_Q(k,y) \dd u \dd\tilde{k}, \quad \tilde{m} \in (\widetilde{M_Q})_V,
\end{gather}
où $u'_Q(k,y)$ est déduite de la fonction
$$ u_Q(\lambda,k,y) := e^{-\angles{\lambda, H_Q(ky)}}, \lambda \in \mathfrak{a}^*_{M,\C};$$
voir \cite[\S 4.1]{Li14a} pour la construction détaillée.

Pour $h \in \mathcal{H}_{\asp}(\tilde{G}_V)$, on pose
\begin{align*}
  L_{\tilde{y}} f & := f(\tilde{y}^{-1} \cdot), \\
  R_{\tilde{y}} f & := f(\cdot \tilde{y}^{-1}), \quad \tilde{y} \in \tilde{G}_V, \\
  L_{Q,h} f & := \int_{\tilde{G}_V} h(\tilde{y}) (L_{\tilde{y}} f)_{\tilde{Q},y^{-1}} \dd\tilde{y}, \\
  R_{Q,h} f & := \int_{\tilde{G}_V} h(\tilde{y}) (R_{\tilde{y}} f)_{\tilde{Q}, y^{-1}} \dd\tilde{y},
\end{align*}
où $y := \rev(\tilde{y})$. Toutes les opérations ci-dessus commutent avec les translations par $A$. Ainsi, on vérifie que $f \mapsto f_{\tilde{Q},y}$, $f \mapsto L_{Q,h} f$ et $f \mapsto R_{Q,h} f$ définissent des applications linéaires $\mathcal{H}_{\asp}(\tilde{G}_V, A) \to \mathcal{H}_{\asp}((\widetilde{M_Q})_V, A)$.

\begin{theoreme}[Cf. {\cite[Lemma 6.2 et (7.2)]{Ar89-IOR1}}]\label{prop:non-invariance}
  Soit $J_{\tilde{M}}$ l'une des distributions $J_{\tilde{M}}(\pi,\cdot)$, $J_{\tilde{M}}(\pi,X,\cdot)$ ou $J_{\tilde{M}}(\tilde{\gamma},\cdot)$, et pour tout $L \in \mathcal{L}(M)$ on note $J^{\tilde{L}}_{\tilde{M}}$ son avatar où le rôle de  $G$ est remplacé par $L$. Alors
  \begin{align*}
    J_{\tilde{M}}(L_h f) & = \sum_{Q=M_Q U_Q \in \mathcal{F}(M)} J^{\widetilde{M_Q}}_{\tilde{M}}(R_{Q,h} f), \\
    J_{\tilde{M}}(R_h f) & = \sum_{Q=M_Q U_Q \in \mathcal{F}(M)} J^{\widetilde{M_Q}}_{\tilde{M}}(L_{Q,h} f)
  \end{align*}
  pour tous $h \in \mathcal{H}_{\asp}(\tilde{G}_V)$ et $f \in \mathcal{H}_{\asp}(\tilde{G}_V)$.
\end{theoreme}
\begin{proof}[Esquisse de la démonstration]
  Le cas de $J_{\tilde{M}}(\tilde{\gamma},\cdot)$ est prouvé dans \cite[Lemma 8.2]{Ar81}. Le cas $A=\{1\}$ et $J_{\tilde{M}} = J_{\tilde{M}}(\pi,\cdot)$ ou $J_{\tilde{M}}(\pi,X,\cdot)$ est traité dans \cite[Lemma 6.2 et (7.2)]{Ar89-IOR1}, mais les caractères pondérés dans ces références-là sont définis à l'aide de facteurs normalisants. On peut les adapter aux caractères pondérés canoniquement normalisés en suivant la recette de \cite[\S 3]{Ar98}.

  Pour passer au cas général, il y a deux choix: (1) on observe que la présence de $A$ n'affecte pas les arguments dans op. cit.; (2) on part du cas $A=\{1\}$, alors le cas général en résulte d'après la Proposition \ref{prop:J-moyenne} car $f \mapsto f_{\tilde{Q},y}$ commute avec translation par $A$.
\end{proof}

On laisse le soin au lecteur de formuler une version pour $f \in \mathcal{H}_{\text{ac},\asp}(\tilde{G}_V)$.

\subsection{Construction des distributions locales invariantes}\label{sec:dist-locale-preuve}
Dans cette sous-section, on suit les idées d'Arthur dans \cite{Ar81,Ar88-TF1} pour définir des distributions invariantes à partir des objets pondérés et de l'application $\phi_{\tilde{M}}$. On utilisera les formules de descente pour données dans \cite[\S\S 8-9]{Ar88-TF1} pour les distributions pondérées.

On raisonne par récurrence sur $\dim G$. On suppose vérifiée l'Hypothèse \ref{hyp:PW} pour toute donnée centrale.

\paragraph{Côté spectral}
Fixons $M \in \mathcal{L}(M_0)$. Mettons l'hypothèse de récurrence suivante.

\begin{hypothese}\label{hyp:recurrence-spec}
  Pour tout $L \in \mathcal{L}(M)$ tel que $L \neq G$, supposons définies des formes linéaires
  $$ f^A \mapsto I^{\tilde{L}}_{\tilde{M}}(\pi, X, f^A), \quad \pi \in \Pi_{\text{unit},-}(\tilde{M}_V, A), X \in \mathfrak{a}_{M,V}/\mathfrak{a} $$
  de $\mathcal{H}_{\asp}(\tilde{L}_V, A)$, pour toute donnée centrale $(A,\mathfrak{a})$ de $\tilde{L}_V$, telles que
  \begin{enumerate}
    \item $I^{\tilde{L}}_{\tilde{M}}(\pi, X, \cdot)$ est continue, invariante et concentrée en $h_L(X)$, en particulier elle se prolonge à $\mathcal{H}_{\mathrm{ac},\asp}(\tilde{L}_V, A)$;
    \item $I^{\tilde{L}}_{\tilde{M}}(\pi, X, \cdot)$ est supportée par $I\mathcal{H}_{\mathrm{ac},\asp}(\tilde{L}_V, A)$;
    \item pour toute donnée centrale $(B,\mathfrak{b})$ de $\tilde{L}_V$ telle que $A \subset B$, on a
      $$ I^{\tilde{L}}_{\tilde{M}}(\pi, X, f^B) = \int_{\mathfrak{b}/\mathfrak{a}} I^{\tilde{L}}_{\tilde{M}}(\pi, X+Y, f^A) \dd Y, \quad \pi \in \Pi_{\text{unit},-}(\tilde{M}_V, B), X \in \mathfrak{a}_{M,V}/\mathfrak{b}, $$
      où $f^A \mapsto f^B$ est l'application dans la Proposition \ref{prop:f-moyenne};
    \item pour $(B,\mathfrak{b})$ comme ci-dessus, on choisit les données $(s',s)$ comme dans la Remarque \ref{rem:G-M-V-observation}, alors
      $$ I^{\tilde{L}}_{\tilde{M}}(\pi,X, f_B) = I^{\tilde{L}}_{\tilde{M}}(\pi, s'(X), f_A), \quad \pi \in \Pi_{\text{unit},-}(\tilde{M}_V, B), X \in \mathfrak{a}_{M,V}/\mathfrak{b}, $$
      où $f_B \mapsto f_A$ est l'inclusion naturelle $\mathcal{H}_{\mathrm{ac},\asp}(\tilde{G}_V, B) \hookrightarrow \mathcal{H}_{\mathrm{ac},\asp}(\tilde{G}_V, A)$.
  \end{enumerate}
\end{hypothese}

\begin{remarque}\label{rem:recurrence-spec}
  Selon la condition de support, on peut aussi bien regarder $I^{\tilde{L}}_{\tilde{M}}(\pi,X,\cdot)$ comme une forme linéaire continue de $I\mathcal{H}_{\text{ac},\asp}(\tilde{L}_V, A)$. Les remarques suivantes nous seront utiles.
  \begin{enumerate}
    \item Soit $\phi^A \in I\mathcal{H}_{\asp}(\tilde{L}_V, A)$ et $\phi^A \mapsto \phi^B$ par la Proposition \ref{prop:phi-moyenne}, alors l'égalité
    $$ I^{\tilde{L}}_{\tilde{M}}(\pi, X, \phi^B) = \int_{\mathfrak{b}/\mathfrak{a}} I^{\tilde{L}}_{\tilde{M}}(\pi, X+Y, \phi^A) \dd Y $$
    demeure valable pour $\phi^A \in I\mathcal{H}_{\text{ac},\asp}(\tilde{L_V}, A)^{G-\text{cpt}}$ si $(A,\mathfrak{a})$ et $(B,\mathfrak{b})$ proviennent de données centrales de $\tilde{G}_V$.
    \item Vu la Proposition \ref{prop:res-diagramme}, la dernière condition implique
      \begin{gather}\label{eqn:I-res-phi}
        I^{\tilde{L}}_{\tilde{M}}(\pi,s'(X),\phi) = I^{\tilde{L}}_{\tilde{M}}(\pi,X,\text{res}(\phi)), \quad \phi \in I\mathcal{H}_{\text{ac},\asp}(\tilde{L}_V, A).
      \end{gather}
  \end{enumerate}
\end{remarque}

\begin{definition-fr}\label{def:I-spec}
  Supposons vérifiée l'Hypothèse \ref{hyp:recurrence-spec}. Soit $(A,\mathfrak{a})$ une donnée centrale de $\tilde{G}_V$, on pose
  $$ I_{\tilde{M}}(\pi, X, f^A) = I^{\tilde{G}}_{\tilde{M}}(\pi, X, f^A) := J_{\tilde{M}}(\pi, X, f^A) - \sum_{\substack{L \in \mathcal{L}(M) \\ L \neq G}} I^{\tilde{L}}_{\tilde{M}}(\pi, X, \phi_{\tilde{L}}(f^A)) $$
  où $f^A \in \mathcal{H}_{\asp}(\tilde{G}_V, A)$, $\pi \in \Pi_{\text{unit},-}(\tilde{M}_V, A)$, $X \in \mathfrak{a}_{M,V}/\mathfrak{a}$, et toutes les formes linéaires sont définies par rapport à $(A,\mathfrak{a})$.
\end{definition-fr}
Lorsque $G$ est anisotrope modulo le centre, l'Hypothèse \ref{hyp:recurrence-spec} est vide et on retrouve les caractères usuels.

\begin{lemme}\label{prop:I_M-def}
  Les formes linéaires $I_{\tilde{M}}(\pi, X, \cdot)$ vérifient toutes les propriétés dans l'Hypothèse \ref{hyp:recurrence-spec} avec $G$ au lieu de $L$.
\end{lemme}
\begin{proof}[Démonstration partielle]
  La continuité est évidente. L'invariance résulte du Théorème \ref{prop:non-invariance} de façon standard: voir les discussions pour \cite[(4.1)]{Ar81} pour les détails. La concentration en $h_G(X)$ provient de \eqref{eqn:J-concentration} et de sa conséquence que $\phi_{\tilde{L}}(f^b) = \phi_{\tilde{L}}(f)^{b \circ h_G}$.

  La formule pour $I_{\tilde{M}}(\pi,X,f^B)$ résulte de la Proposition \ref{prop:J-moyenne} et \ref{prop:phi-moyenne}; il faut utiliser le premier point de la Remarque \ref{rem:recurrence-spec}. La formule pour $I^{\tilde{L}}_{\tilde{M}}(\pi,X, f_B)$ résulte de la Proposition \ref{prop:J-res}, \ref{prop:phi-M-res} et \eqref{eqn:I-res-phi}.

  Il reste à montrer que $I_{\tilde{M}}(\pi,X,\cdot)$ est supportée par $I\mathcal{H}_{\text{ac},\asp}(\tilde{G}_V, A)$. On complétera cette part après une discussion du côté géométrique.
\end{proof}

\paragraph{Côté géométrique}
Le cas géométrique admet une structure analogue à celle précédente, mais on verra que les démonstrations sont beaucoup plus faciles. Fixons toujours $M \in \mathcal{L}(M_0)$ et mettons l'hypothèse de récurrence suivante.

\begin{hypothese}\label{hyp:recurrence-geom}
  Pour tout $L \in \mathcal{L}(M)$ tel que $L \neq G$, supposons définies des formes linéaires
  $$ f^A \mapsto I^{\tilde{L}}_{\tilde{M}}(\tilde{\gamma}, f^A), \quad \tilde{\gamma} \in \Gamma(\tilde{M}_V) $$
  de $\mathcal{H}_{\asp}(\tilde{L}_V, A)$, pour toute donnée centrale $(A,\mathfrak{a})$ de $\tilde{L}_V$, telles que
  \begin{enumerate}
    \item $I^{\tilde{L}}_{\tilde{M}}(\tilde{\gamma}, \cdot)$ est continue, invariante et concentrée en $H_L(\gamma)$, en particulier elle se prolonge à $\mathcal{H}_{\mathrm{ac},\asp}(\tilde{L}_V, A)$;
    \item $I^{\tilde{L}}_{\tilde{M}}(\tilde{\gamma}, \cdot)$ est supportée par $I\mathcal{H}_{\mathrm{ac},\asp}(\tilde{L}_V, A)$;
    \item pour toute donnée centrale $(B,\mathfrak{b})$ de $\tilde{L}_V$ telle que $A \subset B$, on a
      $$ I^{\tilde{L}}_{\tilde{M}}(\tilde{\gamma}, f^B) = \int_{B/A} I^{\tilde{L}}_{\tilde{M}}(z\tilde{\gamma},f^A) \dd z, \quad \tilde{\gamma} \in \Gamma(\tilde{M}_V), $$
      où $f^A \mapsto f^B$ est l'application dans la Proposition \ref{prop:f-moyenne};
    \item pour $(B,\mathfrak{b})$ comme ci-dessus, on choisit les données $(s',s)$ comme dans la Remarque \ref{rem:G-M-V-observation}, alors
      $$ I^{\tilde{L}}_{\tilde{M}}(\tilde{\gamma}, f_B) = I^{\tilde{L}}_{\tilde{M}}(\tilde{\gamma}, f_A), \quad \gamma \in \Gamma(\tilde{M}_V), $$
      où $f_B \mapsto f_A$ est l'inclusion naturelle $\mathcal{H}_{\mathrm{ac},\asp}(\tilde{G}_V, B) \hookrightarrow \mathcal{H}_{\mathrm{ac},\asp}(\tilde{G}_V, A)$;
    \item on a
      $$ I^{\tilde{L}}_{\tilde{M}}(\tilde{\gamma}, f^A) = \lim_{\substack{a \to 1 \\ a \in A_M(F_V)}} \sum_{M_1 \in \mathcal{L}^L(M)} s^{M_1}_M(\gamma, a) I^{\tilde{L}}_{\tilde{M}^1}(a\tilde{\gamma}, f^A), $$
      où $a$ est en position générale de sorte que $M_{a\gamma} = L_{a\gamma}$ et $s^{M_1}_M(\gamma,a)$ est le facteur dans \eqref{eqn:int-orb-ponderee-gen} définissant les intégrales orbitales pondérées.
  \end{enumerate}
\end{hypothese}
La Remarque \ref{rem:recurrence-spec} s'applique aussi à cette situation géométrique.

\begin{definition-fr}
  Supposons vérifiée l'Hypothèse \ref{hyp:recurrence-geom}. Soit $(A,\mathfrak{a})$ une donnée centrale de $\tilde{G}_V$, on pose
  $$ I_{\tilde{M}}(\tilde{\gamma}, f^A) = I^{\tilde{G}}_{\tilde{M}}(\tilde{\gamma}, f^A) := J_{\tilde{M}}(\tilde{\gamma}, f^A) - \sum_{\substack{L \in \mathcal{L}(M) \\ L \neq G}} I^{\tilde{L}}_{\tilde{M}}(\tilde{\gamma}, \phi_{\tilde{L}}(f^A)) $$
  où $f^A \in \mathcal{H}_{\asp}(\tilde{G}_V, A)$, $\tilde{\gamma} \in \Gamma(\tilde{M}_V)$, et toutes les formes linéaires sont définies par rapport à $(A,\mathfrak{a})$.
\end{definition-fr}
Lorsque $G$ est anisotrope modulo le centre, l'Hypothèse \ref{hyp:recurrence-geom} est vide et on retrouve les intégrales orbitales usuelles.

\begin{lemme}
  Les formes linéaires $I_{\tilde{M}}(\tilde{\gamma}, \cdot)$ vérifient toutes les propriétés dans l'Hypothèse \ref{hyp:recurrence-geom} avec $G$ au lieu de $L$.
\end{lemme}
\begin{proof}
  On peut reprendre les arguments pour le Lemme \ref{prop:I_M-def}, qui se simplifient beaucoup dans le cadre géométrique. Par ailleurs, on démontre la dernière propriété par une récurrence simple. Il reste donc à montrer que $I_{\tilde{M}}(\tilde{\gamma}, \cdot)$ est supportée par $I\mathcal{H}_{\text{ac},\asp}(\tilde{G}_V, A)$. Soit $(A',\mathfrak{a}')$ une donnée centrale telle que $A' \subset A$. Au vu de la propriété
  $$ I^{\tilde{L}}_{\tilde{M}}(\tilde{\gamma}, f^A) = \int_{A/A'} I^{\tilde{L}}_{\tilde{M}}(z\tilde{\gamma},f^{A'}) \dd z, \quad f^{A'} \mapsto f^A $$
  que l'on vient d'établir, le Lemme \ref{prop:lemma-support} nous ramène au cas de la donnée centrale $(A',\mathfrak{a}')$. Prenons $A'=\{1\}$ pour se ramener au cas traité par Arthur dans \cite{Ar88-TF1,Ar88-TF2}. Les formules de descente nous ramène ensuite au cas $V=\{v\}$ est un singleton.

  Soit $f \in \mathcal{H}_{\text{ac},\asp}(\tilde{G}_v)$ telle que $\phi_{\tilde{G}}(f)=0$; le but est de montrer que $I_{\tilde{M}}(\tilde{\gamma}, f)=0$ pour tout $\tilde{\gamma} \in \Gamma(\tilde{M}_V)$. On peut aussi supposer que $f \in \mathcal{H}_{\asp}(\tilde{G}_v)$. Imposons une seconde hypothèse de récurrence que cette propriété d'annulation soit satisfaite pour les $I_{\tilde{M}_1}(\cdots)$ où $M_1 \in \mathcal{L}(M)$, $M_1 \neq M$.

  L'étape suivante est de se ramener au cas où $\tilde{\gamma}$ est semi-simple régulière en tant qu'un élément dans $\tilde{G}_v$. Cela se fait par une étude du comportement local des intégrales orbitales pondérées, eg. \cite[Proposition 6.4.1]{Li14a}; voir \cite[pp.523-524]{Ar88-TF2} pour les détails.

  Dans \cite{Li12b}, on a aussi étudié les intégrales orbitales pondérées $J^{\text{SHC}}_{\tilde{M}}(\cdots)$ le long de telles classes, évaluées en les fonctions dans l'espace de Schwartz-Harish-Chandra $\mathcal{C}_{\asp}(\tilde{G}_v)$. Du côté spectral, les caractères pondérés se prolongent aux fonctions dans $\mathcal{C}_{\asp}(\tilde{G}_v)$ pourvu que l'on se limite aux représentations tempérées. Dans ce cadre, on a également les applications $\phi^{\text{SHC}}_{\tilde{L}}: \mathcal{C}_{\asp}(\tilde{G}_v) \to I\mathcal{C}_{\asp}(\tilde{G}_v)$ et les distributions invariantes $I^{\text{SHC}}_{\tilde{L}}(\tilde{\gamma},\cdot)$ supportées par $I\mathcal{C}_{\asp}(\tilde{G}_v)$. Ici, on regarde $I\mathcal{C}_{\asp}(\tilde{G}_v)$ comme un ensemble de fonctions $\Pi_{\text{temp},-}(\tilde{G}_v) \times \mathfrak{a}_{G,F_v} \to \C$; dans \cite[\S 5.6]{Li12b} il n'y a pas de variable en $\mathfrak{a}_{G,F_v}$, mais on peut l'introduire à l'aide d'une transformation de Fourier si l'on veut.

  En reprenant la recette de \S\ref{sec:ac}, on introduit les espaces des fonctions à support presque compact $\mathcal{C}_{\text{ac},\asp}(\tilde{G}_v)$, $I\mathcal{C}_{\text{ac},\asp}(\tilde{G}_v)$ à l'aide de fonctions de Schwartz-Bruhat $b$ sur $\mathfrak{a}_{G,F_v}$, qui servent comme des multiplicateurs. De façon familière, $\phi^{\text{SHC}}_{\tilde{L}}$ et $I^{\text{SHC}}_{\tilde{L}}(\tilde{\gamma},\cdot)$ se prolongent à ces nouveaux espaces. Le lien entre les divers espaces est récapitulé par le diagramme commutatif
  $$\xymatrix{
    \mathcal{C}_{\asp}(\tilde{G}_v) \ar@{^{(}->}[r] & \mathcal{C}_{\text{ac},\asp}(\tilde{G}_v) \ar[r]^{\phi_{\tilde{L}}^{\text{SHC}}} & I\mathcal{C}_{\text{ac},\asp}(\tilde{L}_v) \\
    \mathcal{H}_{\asp}(\tilde{G}_v) \ar@{^{(}->}[r] \ar@{^{(}->}[u] & \mathcal{H}_{\text{ac},\asp}(\tilde{G}_v) \ar[r]_{\phi_{\tilde{L}}} \ar@{^{(}->}[u] & I\mathcal{H}_{\text{ac},\asp}(\tilde{G}_v) \ar@{^{(}->}[u]
  }.$$
  
  En raisonnant par récurrence sur $\dim G$, on obtient
  $$ I_{\tilde{M}}(\tilde{\gamma}, \varphi) = I^{\text{SHC}}_{\tilde{M}}(\tilde{\gamma}, \varphi), \quad \varphi \in \mathcal{H}_{\text{ac},\asp}(\tilde{G}_v). $$

  Appliquons cette égalité au cas $\varphi=f$. Puisque $\phi_{\tilde{G}}(f)=0$, on a aussi $\phi^{\text{SHC}}_{\tilde{G}}(f)=0$. D'où $I_{\tilde{M}}(\tilde{\gamma}, f) = I^{\text{SHC}}_{\tilde{M}}(\tilde{\gamma}, f)=0$ d'après \cite[Corollaire 5.8.9]{Li12b}.
\end{proof}

\begin{proof}[Complétion de la démonstration du Lemme \ref{prop:I_M-def}]
  Montrons que $I_{\tilde{M}}(\pi,X,\cdot)$ est supportée par $I\mathcal{H}_{\text{ac},\asp}(\tilde{G}_V, A)$. Comme dans la démonstration pour $I_{\tilde{M}}(\tilde{\gamma},\cdot)$, on se ramène au cas $A=\{1\}$ par le Lemme \ref{prop:lemma-support}, puis au cas $V=\{v\}$ par les formules de descente. Soit $f \in \mathcal{H}_{\text{ac},\asp}(\tilde{G}_v)$ telle que $\phi_{\tilde{G}}(f)=0$. On vient de voir que $I_{\tilde{L}}(\tilde{\gamma},f)=0$ pour tout $L \in \mathcal{L}(M_0)$ et $\tilde{\gamma} \in \Gamma(\tilde{L}_V)$.

  Si $F_v$ est non-archimédien, il résulte de la densité des distributions invariantes $I_{\tilde{G}}(\tilde{\gamma},\cdot)$ \cite[Proposition 4.2.6]{Li12b} que $I_{\tilde{M}}(\pi,X,f)=0$. Si $F_v$ est archimédien, la preuve de \cite[Theorem 6.1]{Ar88-TF1} permet de ramener, de façon assez formelle et purement locale, le cas de $I_{\tilde{M}}(\pi,X,\cdot)$ à celui des $I_{\tilde{L}}(\tilde{\gamma},\cdot)$. Malheureusement, on ne peut pas la reproduire ici puisque cela nécessiterait beaucoup d'espaces et formes linéaires auxiliaires.
\end{proof}

\subsection{Variante: l'application $\phi^1_{\tilde{M}}$}\label{sec:phi^1}
Dans cette sous-section, on suppose que $V \supset V_\infty$.

\begin{lemme}\label{prop:phi^1}
  Soit $M \in \mathcal{L}(M_0)$. Pour $f^1 \in \mathcal{H}_{\asp}(\tilde{G}_V, A_{G,\infty})$, on pose
  \begin{align*}
    \phi^1_{\tilde{M}}(f^1): \Pi_{\mathrm{temp}}(\tilde{M}_V, A_{M,\infty}) & \longrightarrow \C \\
    \pi & \longmapsto \left[ \pi \mapsto J_{\tilde{M}}(\pi, 0, f^1) \right].
  \end{align*}
  Alors $\phi^1_{\tilde{M}}$ induit une application linéaire continue $\mathcal{H}_{\asp}(\tilde{G}_V, A_{G,\infty}) \to I\mathcal{H}_{\asp}(\tilde{M}, A_{M,\infty})$.
\end{lemme}
Puisque nous utilisons la donnée centrale $(A_{G,\infty},\mathfrak{a}_G)$ (resp. $(A_{M,\infty}, \mathfrak{a}_M)$) pour $\tilde{G}_V$ (resp. pour $\tilde{M}_V$), il n'y a plus besoin d'introduire les espaces de fonctions à support presque compact.

Voici une remarque en passant: il y a des identifications naturelles $\Pi_-(\tilde{G}_V, A_{G,\infty})=\Pi_-(\tilde{G}^1_V)$, $\Pi_-(\tilde{M}_V, A_{M,\infty})=\Pi_-(\tilde{M}^1_V)$. Idem si l'on se limite aux représentations unitaires, tempérées, etc.

\begin{proof}
  Selon la définition de $\phi^1_{\tilde{M}}$, on a le diagramme commutatif
  $$\xymatrix{
    \mathcal{H}_{\text{ac},\asp}(\tilde{G}_V) \ar[rr] & & I\mathcal{H}_{\text{ac},\asp}(\tilde{M}_V) \ar[d]^{\text{res}} \\
    \mathcal{H}_{\asp}(\tilde{G}_V, A_{G,\infty}) \ar@{^{(}->}[u] \ar[rr] & & \{\text{fonctions } \Pi_{\text{temp},-}(\tilde{M}_V, A_{M,\infty}) \to \C \}
  }$$
  où les flèches horizontales sont $\phi_{\tilde{M}}$ et $\phi^1_{\tilde{M}}$. D'après la partie à droite du diagramme dans la Proposition \ref{prop:res-diagramme}, le but de $\text{res}$ peut être remplacé par $I\mathcal{H}_{\asp}(\tilde{M}_V, A_{M,\infty})$, et $\text{res}$ devient continue. Par conséquent, $\phi^1_{\tilde{M}}$ est la composée de trois applications linéaires continues, d'où l'assertion.
\end{proof}

\begin{proposition-fr}\label{prop:I-phi^1}
  Soient $M \in \mathcal{L}(M_0)$ et $f^1 \in \mathcal{H}_{\asp}(\tilde{G}_V, A_{G,\infty})$, alors on a
  $$ I_{\tilde{M}}(\pi,0,f^1) = J_{\tilde{M}}(\pi,0,f^1) - \sum_{\substack{L \in \mathcal{L}(M) \\ L \neq G}} I^{\tilde{L}}_{\tilde{M}}(\pi, 0, \phi^1_{\tilde{L}}(f^1)) $$
  pour tout $\pi \in \Pi_{\mathrm{unit},-}(\tilde{M}_V, A_{M,\infty})$.

  De même, on a
  $$ I_{\tilde{M}}(\tilde{\gamma},f^1) = J_{\tilde{M}}(\tilde{\gamma},f^1) - \sum_{\substack{L \in \mathcal{L}(M) \\ L \neq G}} I^{\tilde{L}}_{\tilde{M}}(\tilde{\gamma}, \phi^1_{\tilde{L}}(f^1)) $$
  pour tout $\tilde{\gamma} \in \Gamma(\tilde{M}_V)$.
\end{proposition-fr}
On prend garde que $I_{\tilde{M}}(\cdots)$ et $J_{\tilde{M}}(\cdots)$ sont définies en utilisant la donnée centrale $(A_{G,\infty}, \mathfrak{a}_G)$ de $\tilde{G}_V$, tandis que $I^{\tilde{L}}_{\tilde{M}}(\cdots)$ sont définies en utilisant la donnée centrale $(A_{L,\infty}, \mathfrak{a}_L)$ de $\tilde{L}_V$, pour tout $L \in \mathcal{L}(M)$, $L \neq G$.

\begin{proof}
  La Définition \ref{def:I-spec} appliquée à la donnée centrale $(A_{G,\infty}, \mathfrak{a}_G)$ donne
  $$ I_{\tilde{M}}(\pi,0,f^1) = J_{\tilde{M}}(\pi,0,f^1) - \sum_{\substack{L \in \mathcal{L}(M) \\ L \neq G}} I^{\tilde{L}}_{\tilde{M}}(\pi, 0, \phi_{\tilde{L}}(f^1)) $$
  où $\phi_{\tilde{L}}(f^1) \in I\mathcal{H}_{\text{ac},\asp}(\tilde{L}_V, A_{G,\infty})$.

  Le deuxième point de la Remarque \ref{rem:recurrence-spec} avec $X=0$ implique
  $$ I^{\tilde{L}}_{\tilde{M}}(\pi,0,\phi_{\tilde{L}}(f^1)) = I^{\tilde{L}}_{\tilde{M}}(\pi,0,\text{res}(\phi_{\tilde{L}}(f^1))),$$
  où le côté à droite est défini par rapport à la donnée centrale $(A_{L,\infty}, \mathfrak{a}_L)$ qui majore $(A_{G,\infty}, \mathfrak{a}_G)$. Or $\text{res}(\phi_{\tilde{L}}(f^1))=\phi^1_{\tilde{L}}(f^1)$ selon la définition de $\phi^1_{\tilde{L}}$. Cela permet de conclure le cas de $I_{\tilde{M}}(\pi,0,f^1)$. Le cas de $I_{\tilde{M}}(\tilde{\gamma},f^1)$ est analogue et plus simple.
\end{proof}

\begin{remarque}
  La construction s'inspire de \cite[p.192]{Ar02}. Les égalités dans cette Proposition donnent une caractérisation des formes linéaires $f^1 \mapsto I_{\tilde{M}}(\pi,0,f^1)$ sans référence aux espaces avec l'indice ``ac''.
\end{remarque}
% Peut-on mentionner la non-invariance dans ce cadre?

%%
\section{Développements fins révisés}\label{sec:developpements}
Dans cette section, on étudie un revêtement adélique à $m$ feuillets
$$ 1 \to \bmu_m \to \tilde{G} \xrightarrow{\rev} G(\A) \to 1, $$
muni des données auxiliaires $V \supset V_\text{ram}$, $M_0$, $K = \prod_v K_v$ comme dans \S\ref{sec:rev}. Ces données déterminent la distribution
$$ J: \mathcal{H}_{\asp}(\tilde{G}_V, A_{G,\infty}) \to \C $$
qui fait l'objet de la formule des traces grossière \cite{Li14a}. Elle admet deux développements, l'un géométrique et l'autre spectral. Les deux premières sous-sections sont consacrées à l'étude du côté géométrique, et le reste est consacré au côté spectral.

Adoptons la convention que les représentations locales sont désignées par des symboles usuels, eg. $\pi$, et les représentations automorphes sont dotées d'un $\circ$, eg. $\mathring{\pi}$.

\subsection{Le développement géométrique fin}
On commence en rappelant les résultats dans \cite[\S 6.5]{Li14a}. Soit $S$ un ensemble fini de places de $F$ tel que $S \supset V_\text{ram}$; on introduira un sous-ensemble $V \subset S$ plus loin. D'après Arthur, deux éléments $\gamma_1$, $\gamma_2 \in G(F)$ avec décompositions de Jordan $\gamma_i = \sigma_i u_i$ ($i=1,2$) sont dits $(G,S)$-équivalents s'il existe $x \in G(F)$ et $y \in G_{\sigma_2}(F_S)$ tels que
\begin{align*}
  x^{-1} \sigma_1 x & = \sigma_2, \\
  (xy)^{-1} u_1 xy & = u_2.
\end{align*}
Cette notion est plus grossière que $G(F)$-conjugaison, mais plus fine que la $\mathcal{O}$-équivalence \cite[\S 5.1]{Li14a}. En se rappelant la notion des bons éléments (la Définition \ref{def:bonte}), on pose
$$
  \begin{array}{ll}
  (G(F))_{G,S} & := \{\text{classes de $(G,S)$-équivalence dans $G(F)$}\}, \\
  (G(F))_{G,S}^K & := \left\{
    \begin{array}{ll}
      c \in (G(F))_{G,S}: & \exists \gamma = \sigma u \in c \;\text{(décomposition de Jordan),  où} \\
      & \sigma \text{ est $S$-admissible},\\
      & \sigma^S \in K^S.
    \end{array}
  \right\},\\
  (G(F))_{\tilde{G},S}^{K,\text{bon}} & := \{c \in (G(F))_{G,S}^K : c \text{ est bon dans } M(F_S) \}.
  \end{array}
$$
Un élément $\gamma$ satisfaisant à la condition définissant $(G(F))_{G,S}^K$ est dit un représentant admissible de $c$. Lorsqu'il n'y a pas de confusion à craindre, on utilisera un représentant admissible $\gamma$ au lieu de $c$ pour désigner un élément de $(G(F))_{G,S}^K$. 

Voici une procédure pour extraire la composante locale d'un élément dans $G(F) \subset \tilde{G}$. Pour $\gamma \in (G(F))_{\tilde{G},S}^{K,\text{bon}}$, confondu avec un représentant admissible, et $\widetilde{\gamma_S} \in \tilde{G}_S$, on écrit
\begin{gather}\label{eqn:leadsto}
  \gamma \leadsto \widetilde{\gamma_S}
\end{gather}
si $\gamma=\sigma u$ est la décomposition de Jordan et $\widetilde{\gamma_S} = \widetilde{\sigma_S} u_S$ où
\begin{itemize}
  \item $\sigma^S \in K^S \hookrightarrow \tilde{G}^S$, $\sigma = \widetilde{\sigma_S} \times \sigma^S$ dans $\tilde{G}$;
  \item $u_S \in G_\text{unip}(F_S) \hookrightarrow \tilde{G}_S$.
\end{itemize}
On observe que $\widetilde{\gamma_S} \in \tilde{G}_S^1$ s'il existe $\gamma$ tel que $\gamma \leadsto \widetilde{\gamma_S}$.

Il faut prendre garde que $\gamma \leadsto \widetilde{\gamma_S}$ est seulement une correspondance; elle ne définit pas forcément une application. Or on a la majoration suivante.
\begin{lemme}\label{prop:leadsto-majoration}
  Soit $\gamma \in (G(F))_{\tilde{G},S}^{K,\mathrm{bon}}$, alors
  $$ \#\{ \widetilde{\gamma_S} \in \Gamma(\tilde{G}_S^1) : \gamma \leadsto \widetilde{\gamma_S} \} \leq m. $$
\end{lemme}
\begin{proof}
  Soient $\gamma_1, \gamma_2$ deux représentants admissibles ayant la même classe dans $(G(F))_{\tilde{G},S}^{K,\text{bon}}$, ce qui donnent deux éléments locaux $\widetilde{\gamma_{i,S}} = \widetilde{\sigma_{i,S}} u_{i,S}$ ($i=1,2$). La définition de $(G,S)$-équivalence implique qu'il existe $t \in G(F_S)$ tels que $t^{-1} \gamma_{1,S} t = \gamma_{2,S}$. Puisqu'il y a exactement $m$ classes de conjugaison dans $\tilde{G}_S$ au-dessus d'une bonne classe de conjugaison dans $G(F_S)$, la majoration en découle.
\end{proof}

Rappelons que pour définir les distributions, il convient de fixer une mesure invariante sur les bonnes classes de $\tilde{M}_S$-conjugaison, d'où le $\R_{>0}$-torseur $\dot{\Gamma}(\tilde{G}_S) \to \Gamma(\tilde{G}_S)$. On désigne par $\dot{\widetilde{\gamma_S}}$ une image réciproque quelconque de la classe de $\widetilde{\gamma_S}$ pour ce torseur.

Soit $M \in \mathcal{L}(M_0)$, les définitions ci-dessus s'appliquent également à $M$. Tout élément  $\dot{\widetilde{\gamma_S}} \in \dot{\Gamma}(\tilde{M}_S)$ admet une décomposition de Jordan \cite[\S 6.3]{Li14a}
$$ \dot{\widetilde{\gamma_S}} = \dot{\widetilde{\sigma}} \dot{u} $$
parallèle à la décomposition de Jordan $\widetilde{\gamma_S} = \tilde{\sigma} u$ dans $M(F_S)$, où $\dot{u} \in \dot{\Gamma}(G_{\sigma_S})$. Si la mesure de Haar de $G_\sigma(F_S)$ est choisie, ce que l'on suppose, alors cette décomposition peut être rendue canonique. Rappelons maintenant la définition des coefficients géométriques \cite[Définition 6.5.5]{Li14a}.

\begin{definition-fr}\label{def:coeff-geom-0}
  Soit $\gamma$ une classe dans $(M(F))^{K^M,\text{bon}}_{\tilde{M},S}$, confondue avec un représentant admissible avec la décomposition de Jordan $\gamma=\sigma u$, ce qui définit $\gamma \leadsto \widetilde{\gamma_S}$. On pose
  \begin{gather*}
    \epsilon^M(\sigma) := \begin{cases} 1, & \text{si $\sigma$ est $F$-elliptique dans $M$},\\ 0,& \text{sinon}; \end{cases},  \\
    \text{Stab}(\sigma,u) := \{ t \in M_\sigma(F) \backslash M^\sigma(F) : t u t^{-1} \text{ est conjugué à } u \text{ dans } M_\sigma(F_S) \}, \\
    a^{\tilde{M}}(S, \dot{\widetilde{\gamma_S}}) := \epsilon^M(\sigma) |\text{Stab}(\sigma,u)|^{-1} a^{M_\sigma, [\cdot,\sigma]}(S,\dot{u}).
  \end{gather*}
  Ici, l'ellipticité de $\sigma$ dans $M$ signifie que $\mathfrak{a}_M = \mathfrak{a}_{M\sigma}$. Le terme $a^{M_\sigma, [\cdot,\sigma]}(S,\dot{u})$ est le coefficient géométrique fourni par \cite[Théorème 5.5.1]{Li14a} pour la formule des traces pour $M_\sigma(\A)$ tordue par le caractère-commutateur $[\cdot, \sigma]$ (voir \cite[\S 2.6]{Li14a}). On observe que si la classe unipotente triviale $1$ est $[\cdot,\sigma]$-bonne dans $M_\sigma(\A)$, ce qui revient au même de dire que $[\cdot,\sigma]$ est trivial sur $M_\sigma(\A)$, alors 
  $$ a^{M_\sigma, [\cdot,\sigma]}(S,1) = \mes(M_\sigma(F) A_{M_\sigma, \infty} \backslash M_\sigma(\A)); $$
  sinon ce coefficient est nul. On renvoie à \cite[Corollary 4.4 et 8.5]{Ar85} pour les détails.
\end{definition-fr}

Rappelons qu'Arthur a défini la notion de sous-ensembles admissibles de $G(F_S)$ (voir \cite[\S 5.6]{Li14a}). On dit qu'un sous-ensemble de $\Delta \subset \tilde{G}_S$ est admissible si $\rev(\Delta)$ l'est. L'admissibilité de $\Delta$ ne dépend que de $\rev(\Delta)$ modulo $Z_G(F_S)$. On pose
\begin{gather}\label{eqn:H-adm}
  \mathcal{H}_{\text{adm},\asp}(\tilde{G}_S, A_{G,\infty}) := \{ f \in \mathcal{H}_{\asp}(\tilde{G}_S, A_{G,\infty}) : \Supp(f) \text{ est admissible } \}.
\end{gather}
Il y a aussi une notion de $S$-admissibilité pour un sous-ensemble de $G(\A)$, donc de $\tilde{G}$. Cf. loc. cit.

Le développement géométrique fin s'énonce comme suit.
\begin{theoreme}\label{prop:dev-geom-fin}
  Soit $f_S^1 \in \mathcal{H}_{\mathrm{adm},\asp}(\tilde{G}_S, A_{G,\infty})$. On pose
  $$ \mathring{f}^1 := f_S^1 \cdot f_{K^S} \in \mathcal{H}_{\asp}(\tilde{G}, A_{G,\infty}) $$
  où $f_{K^S}$ est l'unité de l'algèbre de Hecke sphérique anti-spécifique $H_{\asp}(\tilde{G}^S /\!/ K^S)$. Alors
  \begin{gather}\label{eqn:dev-geom-fin}
    J(\mathring{f}^1) = \sum_{M \in \mathcal{L}(M_0)} \frac{|W^M_0|}{|W^G_0|} \sum_{\substack{\gamma \in (M(F))_{\tilde{M},S}^{K^M,\mathrm{bon}} \\ \gamma \leadsto \widetilde{\gamma_S}}} a^{\tilde{M}}(S, \dot{\widetilde{\gamma_S}}) J_{\tilde{M}}(\dot{\widetilde{\gamma_S}}, f_S^1),
  \end{gather}
  où le $\widetilde{\gamma_S}$ est un élément quelconque dans $\Gamma(\tilde{M}_S)$ tel que $\gamma \leadsto \widetilde{\gamma_S}$. Le terme $a^{\tilde{M}}(S, \dot{\widetilde{\gamma_S}}) J_{\tilde{M}}(\dot{\widetilde{\gamma_S}}, \cdot)$ est indépendant du choix de $\widetilde{\gamma_S}$ et $\dot{\widetilde{\gamma_S}}$.
\end{theoreme}
\begin{proof}
  C'est essentiellement \cite[Théorème 6.5.9]{Li14a}. Là, on demande que $S$ est suffisamment grand par rapport au support de $f_S^1$; le sens précis est contenu dans \cite[Théorème 6.5.8]{Li14a} et les remarques qui le suivent. On vérifie qu'il suffit de prendre $S \supset V_\text{ram}$ tel que $f_S^1 \in \mathcal{H}_{\mathrm{adm},\asp}(\tilde{G}_S, A_{G,\infty})$.
\end{proof}
Il sera utile d'isoler les termes dans \eqref{eqn:dev-geom-fin} avec $M=G$.
\begin{gather}\label{eqn:I_ell}
  I_\text{ell}(f^1_S) := \sum_{\substack{\gamma \in (G(F))_{\tilde{G},S}^{K,\text{bon}} \\ \gamma \leadsto \widetilde{\gamma_S}}} a^{\tilde{G}}(S, \dot{\widetilde{\gamma_S}}) J_{\tilde{G}}(\dot{\widetilde{\gamma_S}}, f_S^1), \quad f^1_S \in \mathcal{H}_{\text{adm},\asp}(\tilde{G}_S, A_{G,\infty}).
\end{gather}

L'étape suivante est d'indexer ce développement selon les classes dans $\Gamma(\tilde{M}_V)$, $M \in \mathcal{L}(M_0)$.

\subsection{Compression des coefficients géométriques}
Désormais, on écrit $\tilde{\gamma}$ au lieu de $\widetilde{\gamma_S}$ pour désigner un élément ou une classe de conjugaison dans $\tilde{G}_S$, afin d'alléger les notations. Rappelons que dans \S\ref{sec:dists}, on a identifié $\Gamma(\tilde{G}_S^1)$ à une base de $\mathcal{D}(\tilde{G}_S, A_{G,\infty})$. Autrement dit, on choisit une image réciproque dans $\dot{\Gamma}(\tilde{G}_S^1)$ pour chaque élément de $\Gamma(\tilde{G}_S^1)$. La notation $\dot{\gamma}$ n'est plus utilisée.

Soit $\tilde{\gamma} \in \Gamma(\tilde{G}^1_S)$. Vu la convention ci-dessus et le Lemme \ref{prop:leadsto-majoration}, on peut définir les coefficients elliptiques
\begin{gather}\label{eqn:coeff-ell}
  a^{\tilde{G}}_\text{ell}(\tilde{\gamma}) := \frac{1}{m} \sum_{\noyau \in \bmu_m} \; \sum_{\substack{\gamma \in (G(F))_{\tilde{G},S}^{K,\mathrm{bon}} \\ \gamma \leadsto \noyau\tilde{\gamma}}} \frac{\noyau^{-1} \cdot a^{\tilde{G}}(S, \noyau\tilde{\gamma})}{\#\{ \tilde{\eta} \in \Gamma(\tilde{G}_S) : \gamma \leadsto \tilde{\eta} \}}.
\end{gather}

Ce coefficient ne dépend que de $\tilde{\gamma}$. Il est défini de sorte que les assertions suivantes soient satisfaites.
\begin{lemme}\label{prop:a_ell}
  On a la caractérisation suivante de $a^{\tilde{G}}_{\mathrm{ell}}(\cdot)$.
  \begin{gather*}
    a^{\tilde{G}}_{\mathrm{ell}}(\noyau\tilde{\gamma}) = \noyau a^{\tilde{G}}_{\mathrm{ell}}(\tilde{\gamma}), \quad \noyau \in \bmu_m, \; \tilde{\gamma} \in \Gamma(\tilde{M}^1_S); \\
    \sum_{\tilde{\gamma} \in \Gamma(\tilde{M}^1_S)} a^{\tilde{G}}_{\mathrm{ell}}(\tilde{\gamma}) J_{\tilde{G}}(\tilde{\gamma}, f^1_S) = I_{\mathrm{ell}}(f^1_S), \quad f^1_S \in \mathcal{H}_{\mathrm{adm},\asp}(\tilde{G}_S, A_{G,\infty}).
  \end{gather*}
\end{lemme}

On définit $\Gamma_\text{ell}(\tilde{G}^1, S)$ comme l'ensemble de $\tilde{\gamma} \in \Gamma(\tilde{G}_S^1)$ tel qu'il existe $\mathring{\gamma} \in G(F)$ satisfaisant à
\begin{itemize}
  \item la partie semi-simple de $\mathring{\gamma}$ est $F$-elliptique dans $G$;
  \item $\mathring{\gamma}$ est un représentant admissible d'une classe dans $(G(F))_{\tilde{G},S}^{K,\text{bon}}$;
  \item il existe $\noyau \in \bmu_m$ tel que $\mathring{\gamma} \leadsto \noyau\tilde{\gamma}$.
\end{itemize}
On peut vérifier que $\Gamma_\text{ell}(\tilde{G}^1, S)$ est un sous-ensemble discret de l'espace topologique  $\Gamma(\tilde{G}_S^1)$. Le coefficient elliptique $a^{\tilde{G}}_\text{ell}(\cdot)$ est à support dans $\Gamma_\text{ell}(\tilde{G}^1, S)$.

On en déduira les autres coefficients du côté spectral et leurs domaines. Pour ce faire, il convient d'introduire une notion d'induction pour les classes de conjugaison.

\begin{definition-fr}\label{def:induction-orbites}
  Soit $M \in \mathcal{L}(M_0)$, on définit une application
  \begin{align*}
    \Gamma(\tilde{M}_S^1) & \longrightarrow \mathcal{D}(\tilde{G}_S, A_{G,\infty}) \\
    \tilde{\gamma} & \longmapsto \tilde{\gamma}^G,
  \end{align*}
  de la manière suivante. Rappelons que l'orbite de $\tilde{\gamma}$ est supposée munie d'une mesure invariante. Soit $\tilde{\gamma} = \dot{\tilde{\sigma}} \dot{u}$ une décomposition de Jordan munie de mesures invariantes. On a le caractère continu $[\cdot,\sigma]$ de $G_\sigma(F_S)$. On définit ensuite
  $$ \tilde{\gamma}^G := \dot{\tilde{\sigma}} \dot{u}^G, $$
  où $\dot{u} \mapsto \dot{u}^G$ est l'induction de Spaltenstein-Lusztig envoyant un élément de $\dot{\Gamma}_\text{unip}(M_\sigma(F_S))^{[\cdot,\sigma]}$ sur une combinaison linéaire d'éléments de $\dot{\Gamma}_\text{unip}(G_\sigma(F_S))^{[\cdot,\sigma]}$: voir \cite[Lemme 5.3.4]{Li14a}. L'expression $\dot{\tilde{\sigma}} \dot{u}^G$ définit alors un élément de $\mathcal{D}(\tilde{G}_S, A_{G,\infty})$ via la décomposition de Jordan munie de mesures invariantes.

  Soient $\tilde{\mu} \in \Gamma(\tilde{M}_S^1)$ et $\tilde{\gamma} \in \Gamma(\tilde{G}_S^1)$. On pose
  \begin{gather}\label{eqn:mult-geom}
    (\tilde{\mu}^G : \tilde{\gamma}) := \text{le coefficient de } \tilde{\gamma} \text{ dans } \tilde{\mu}^G .
  \end{gather}
\end{definition-fr}

%Maintenant on peut définir un domaine des coefficients généraux:
On définit maintenant
\begin{gather}
  \Gamma(\tilde{G}^1,S) := \{\tilde{\gamma} \in \Gamma(\tilde{G}_S^1) : \exists M \in \mathcal{L}(M_0), \; \tilde{\mu} \in \Gamma_\text{ell}(\tilde{M}^1, S) \; \text{ tels que } (\tilde{\mu}^G : \tilde{\gamma}) \neq 0 \}.
\end{gather}
C'est toujours un sous-ensemble discret dans $\Gamma(\tilde{G}^1_S)$.

Pour définir les coefficients généraux, on change légèrement les notations: on considère deux ensembles finis $S$, $V$ des places de $F$ tels que
$$
  S \supset V \supset V_\text{ram}.
$$
Dans ce qui suit, $V$ sera fixé et on variera $S$.

\begin{definition-fr}\label{def:coeff-geom}
  Soit $\tilde{\gamma} \in \Gamma(\tilde{G}_V^1)$. On prend $S$ tel que $\{\gamma\} \times K^V$ est un ensemble $S$-admissible dans $G(\A)$. On pose
  $$ a^{\tilde{G}}(\tilde{\gamma}) :=  \sum_{M \in \mathcal{L}(M_0)} \frac{|W^M_0|}{|W^G_0|} \sum_{k \in \mathcal{K}_\text{ell}^V(\tilde{M},S)} \sum_{\tilde{\mu} \in \Gamma_\text{ell}(\tilde{M}^1, V)} (\tilde{\mu}^G : \tilde{\gamma}) a^{\tilde{M}}_\text{ell}(\tilde{\mu} \times k) r^{\tilde{G}}_{\tilde{M}}(k), $$
  où
  \begin{itemize}
    \item $\mathcal{K}(\tilde{M}^V_S)$ est l'ensemble des classes de conjugaison dans $\tilde{M}^V_S$ qui rencontrent $\tilde{M}^V_S \cap \tilde{K}^V_S$, de telles classes sont munies de mesures invariantes selon nos conventions dans \S\ref{sec:dists};
    \item pour tout $k \in \mathcal{K}(\tilde{M}^V_S)$, on définit l'intégrale orbitale pondérée anti-spécifique non ramifiée
    $$ r^{\tilde{G}}_{\tilde{M}}(k) := J_{\tilde{M}}(k, f_{K^V_S}) $$
    comme dans \cite[Définition 6.3.3]{Li14a}, où $f_{K^V_S}$ est l'unité de l'algèbre de Hecke sphérique anti-spécifique $\mathcal{H}_{\asp}(\tilde{G}^V_S /\!/ K^V_S)$;
    \item $\mathcal{K}_\text{ell}^V(\tilde{M},S)$ est le sous-ensemble de $\mathcal{K}(\tilde{M}^V_S)$ formé des éléments $k$ tels qu'il existe $\tilde{\mu} \in \Gamma(\tilde{M}_V^1)$ vérifiant
    $$ \tilde{\mu} \times k \in \Gamma_\text{ell}(\tilde{M}^1,S); $$
    dans ce cas-là, on a également $\tilde{\mu} \in \Gamma_\text{ell}(\tilde{M}^1, V)$.
  \end{itemize}
\end{definition-fr}
On voit que cette somme est finie pour tout $\tilde{\gamma}$. Ceci résulte, pour l'essentiel, du fait que $a^{\tilde{M}}_\text{ell}(\cdot): \Gamma(\tilde{M}^1_S) \to \C$ est à support dans le sous-ensemble discret $\Gamma_{\text{ell}}(\tilde{M}^1, S)$. Cf. \cite[p.195]{Ar02}.

\begin{remarque}
  Il en résulte immédiatement que $a^{\tilde{G}}(\cdot)$ est à support dans $\Gamma(\tilde{G}^1,V)$. D'ailleurs, il résultera de la Proposition \ref{prop:coeff-geom-indep} que $a^{\tilde{G}}(\tilde{\gamma})$ ne dépend pas du choix de $S$, ce qui justifie la notation.
\end{remarque}

\begin{theoreme}\label{prop:dev-geom-nouveau}
  Soient $V \supset V_\mathrm{ram}$ un ensemble fini de places de $F$ et $f^1 \in \mathcal{H}_{\asp}(\tilde{G}_V, A_{G,\infty})$. On pose $\mathring{f}^1 := f^1 f_{K^V} \in \mathcal{H}_{\asp}(\tilde{G}, A_{G,\infty})$ et $J(f^1) := J(\mathring{f}^1)$. Alors on a
  $$ J(f^1) = \sum_{L \in \mathcal{L}(M_0)} \frac{|W^L_0|}{|W^G_0|} \sum_{\tilde{\gamma} \in \Gamma(\tilde{L}^1,V)} a^{\tilde{L}}(\tilde{\gamma}) J_{\tilde{L}}(\tilde{\gamma}, f^1). $$
\end{theoreme}
\begin{proof}
  Prenons $S \supset V$ suffisamment grand de sorte que $f_S^1 := f^1 f_{K_S^V}$ appartient à $\mathcal{H}_{\text{adm},\asp}(\tilde{G}_S, A_{G,\infty})$, ce qui est toujours possible. Comme dans le Lemme \ref{prop:a_ell}, il résulte du Théorème \ref{prop:dev-geom-fin} que
  $$ J(\mathring{f}^1) = \sum_{M \in \mathcal{L}(M_0)} \frac{|W^M_0|}{|W^G_0|} \sum_{\tilde{\gamma}_S \in \Gamma_\text{ell}(\tilde{M}^1,S)} a^{\tilde{M}}_\text{ell}(\tilde{\gamma}_S) J_{\tilde{M}}(\tilde{\gamma}_S, f_S^1) $$
  d'après \eqref{eqn:coeff-ell}.

  On écrit $\tilde{\gamma}_S = \tilde{\mu} \times \tilde{\gamma}^V_S$ où $\tilde{\mu} \in \Gamma(\tilde{M}_V)$. Alors
  \begin{align*}
    J_{\tilde{M}}(\tilde{\gamma}_S, f_S^1) & = \sum_{L,L_1 \in \mathcal{L}(M)} d^G_M(L,L_1) J^{\tilde{L}_1}_{\tilde{M}}(\tilde{\mu}, (f^1)_{\tilde{Q}_1}) J^{\tilde{L}}_{\tilde{M}}(\tilde{\gamma}^V_S, (f_{K^V_S})_{\tilde{Q}}) \\
    & = \sum_{L \in \mathcal{L}(M)} \left( \sum_{L_1 \in \mathcal{L}(M)} d^G_M(L,L_1) J^{\tilde{L}_1}_{\tilde{M}}(\tilde{\mu}, (f^1)_{\tilde{Q}_1}) \right) J^{\tilde{L}}_{\tilde{M}}(\tilde{\gamma}^V_S, (f_{K^V_S})_{\tilde{Q}}) \\
    & = \sum_{L \in \mathcal{L}(M)} J_{\tilde{L}}(\tilde{\mu}^L, f^1) J^{\tilde{L}}_{\tilde{M}}(\tilde{\gamma}^V_S, f_{K^V_S \cap \tilde{L}}),
  \end{align*}
  où on a utilisé les formules de descente \cite[Proposition 6.4.3 et 6.4.2]{Li14a}, y compris une application $(L,L_1) \mapsto (Q \in \mathcal{P}(L), Q_1 \in \mathcal{P}(L_1))$ dépendant de choix auxiliaires que nous ne précisons pas, ainsi qu'un calcul simple qui montre que la descente parabolique $(f_{K^V_S})_{\tilde{Q}}$ est égale à $f_{K^V_S \cap \tilde{L}}$. Voir \cite[\S 6.4]{Li14a} pour la définition de descente parabolique $f^1 \mapsto (f^1)_{\tilde{Q}_1}$ et $f_{K^V_S} \mapsto (f_{K^V_S})_{\tilde{Q}}$.

  L'intégrale orbitale pondérée anti-spécifique $J^{\tilde{L}}_{\tilde{M}}(\tilde{\gamma}^V_S, f_{K^V_S \cap \tilde{L}})$ est nulle sauf si $\tilde{\gamma} = k \in \mathcal{K}(\tilde{M}^V_S)$; dans ce cas-là c'est égal à $r^{\tilde{L}}_{\tilde{M}}(k)$. Donc $J(\mathring{f}^1)$ s'écrit comme
  $$ \sum_{L \in \mathcal{L}(M_0)} \sum_{M \in \mathcal{L}^L(M_0)} \frac{|W^M_0|}{|W^G_0|} \sum_{\tilde{\mu},k} a^{\tilde{M}}_\text{ell}(\tilde{\mu} \times k) r^{\tilde{L}}_{\tilde{M}}(k) J_{\tilde{L}}(\tilde{\mu}^L, f^1) $$
  où $(\mu,k)$ parcourt $\Gamma_\text{ell}(\tilde{M}^1,V) \times \mathcal{K}_\text{ell}^V(\tilde{M},S)$. En décomposant $\tilde{\mu}^L$ selon $\{\tilde{\gamma} \in \Gamma(\tilde{L}^1,V)\}$ (rappelons \eqref{eqn:mult-geom}) et en regroupant cette somme, on arrive à
  $$ \sum_L \frac{|W^L_0|}{|W^G_0|} \sum_{\tilde{\gamma}} \left( \sum_{M \in \mathcal{L}^L(M_0)} \frac{|W^M_0|}{|W^L_0|} \sum_{k, \tilde{\mu}} (\tilde{\mu}^L : \tilde{\gamma}) a^{\tilde{M}}_\text{ell}(\tilde{\mu} \times k) r^{\tilde{L}}_{\tilde{M}}(k) \right) J_{\tilde{L}}(\tilde{\gamma}, f^1). $$

  En se rappelant la définition d'admissibilité, on voit que les $\tilde{\gamma}$ pour lesquels il existe $\tilde{\mu} \in \Gamma_\text{ell}(\tilde{M}^1,V)$ avec $(\tilde{\mu}^L : \tilde{\gamma}) \neq 0$ satisfont à la condition de la Définition \ref{def:coeff-geom}, donc $a^{\tilde{L}}(\tilde{\gamma})$ est défini et égal à l'expression dans la parenthèse. Cela permet de conclure.
\end{proof}

\begin{remarque}\label{rem:V-S-convention}
  La possibilité $V=S$ n'est pas exclue. Dans ce cas-là, on pose $\mathcal{K}(\tilde{M}^V_S)=\mathcal{K}_\text{ell}^V(\tilde{M},S)=\{*\}$, $\tilde{\mu} \times k = \tilde{\mu}$ et $r^G_M(*)=1$ si $G=M$, sinon $r^G_M(*)=0$. Les arguments ci-dessus demeurent valables avec cette convention.
\end{remarque}

\begin{proposition-fr}\label{prop:coeff-geom-indep}
  Les coefficients $a^{\tilde{G}}(\tilde{\gamma})$ sont indépendants du choix de $S$.
\end{proposition-fr}
\begin{proof}
  Observons tout d'abord que
  $$ J_{\tilde{G}}(\noyau\tilde{\gamma}, f) = \noyau^{-1} J_{\tilde{G}}(\tilde{\gamma}, f), \quad \noyau \in \bmu_m $$
  pour toute $f$ anti-spécifique, tandis que les coefficients ont l'équivariance opposée
  $$ a^{\tilde{G}}(\noyau\tilde{\gamma}) = \noyau a^{\tilde{G}}(\tilde{\gamma}), \quad \noyau \in \bmu_m $$
  d'après le Lemme \ref{prop:a_ell} et la Définition \ref{def:coeff-geom}. L'invariance des distributions $J_{\tilde{G}}(\tilde{\gamma}, \cdot)$ et lesdites propriétés d'équivariance permettent de montrer que lorsque $G/Z_G$ est anisotrope, les $a^{\tilde{G}}(\cdot)$ sont déterminés par la distribution spécifique $f^1 \mapsto J(f^1)$, qui est bien sûr indépendante de $S$.

  On raisonne par récurrence et on suppose que $a^{\tilde{L}}(\cdot)$ est indépendant de $S$, pour tout $L \in \mathcal{L}(M_0)$, $L \neq G$ (on vient d'établir le cas $L=M_0$). Les distributions $J_{\tilde{L}}(\tilde{\gamma}, \cdot)$ sont indépendantes de $S$. Vu le Théorème \ref{prop:dev-geom-nouveau}, la distribution
  $$ \sum_{\tilde{\gamma}} a^{\tilde{G}}(\tilde{\gamma}) J_{\tilde{G}}(\tilde{\gamma},\cdot) $$
  l'est aussi. Or $J_{\tilde{G}}(\tilde{\gamma},\cdot)$ n'est que l'intégrale orbitale invariante anti-spécifique. On raisonne comme dans le cas $G/Z_G$ anisotrope pour montrer que $a^{\tilde{G}}(\tilde{\gamma})$ est déterminé par $J$, d'où l'indépendance cherchée.
\end{proof}

% Au vu de cette équivariance , on peut aussi bien indexer la somme par $\Gamma(M^1, V)$.

\subsection{Le développement spectral fin}
Tout d'abord, on rappelle le développement spectral fin obtenu dans \cite{Li13}. Soit $\mathring{f}^1 \in \mathcal{H}_{\asp}(\tilde{G}, A_{G,\infty})$. On décompose
$$ J(\mathring{f}^1) = \sum_{t \geq 0} J_t(\mathring{f}^1) $$
où
$$ J_t(\mathring{f}^1) := \sum_{\substack{\chi \in \mathfrak{X} \\ \|\Im(\nu_\chi)=t\|}} J_\chi(\mathring{f}^1)  \quad \text{(somme finie)}. $$
Ici $\mathfrak{X} = \mathfrak{X}^{\tilde{G}}$ est l'ensemble des données cuspidales automorphes et $\nu_\chi$ est essentiellement le caractère infinitesimal associé à $\chi$. On renvoie à \cite[\S 6]{Li14a} et \cite{Li13} pour les définitions précises. Le paramètre $t$ est introduit pour des raisons de convergence absolue; on le discutera dans la Remarque \ref{rem:convergence-spec}.

Étant donné $t$, on définit des ensembles de représentations unitaires spécifiques de $\tilde{G}^1$:
\begin{align*}
  \Pi_{\text{disc},t,-}(\tilde{G}^1) & := \Pi_{\text{disc},t}(\tilde{G}^1) \cap \Pi_-(\tilde{G}^1),\\
  \Pi_{t,-}(\tilde{G}^1) & := \Pi_t(\tilde{G}^1) \cap \Pi_-(\tilde{G}^1);
\end{align*}
où $\Pi_{\text{disc},t}(\tilde{G}^1)$ et $\Pi_t(\tilde{G}^1)$ sont définis dans \cite[\S 7]{Li13}. On a $\Pi_{\text{disc},t,-}(\tilde{G}^1) \subset \Pi_{t,-}(\tilde{G}^1)$. Comme dans \S\ref{sec:dists}, on préfère de regarder les représentations de $\tilde{G}^1$ comme des représentations de $\tilde{G}$ telles que $A_{G,\infty}$ opère trivialement.

Soit $P=MU \in \mathcal{F}(M_0)$. On note $L_\text{disc}^2(A_{M,\infty}M(F) \backslash \tilde{M})$ la partie discrète de la représentation $R_M$ de $\tilde{M}$ sur $L^2(A_{M,\infty}M(F) \backslash \tilde{M})$. Plus généralement, on considère aussi sa variante tordue par $\lambda \in \mathfrak{a}^*_{M,\C}$.
\begin{gather}\label{eqn:L2-M}
  L_\text{disc}^2(A_{M,\infty}M(F) \backslash \tilde{M}) \otimes e^{\angles{\lambda, H_{\tilde{M}}(\cdot)}}.
\end{gather}

Soit $\lambda \in i(\mathfrak{a}^G_M)^*$. On désigne par $\mathcal{I}_{\tilde{P},\text{disc},t}(\lambda)$ la représentation intersection de la $t$-part de $L^2(A_{G,\infty}G(F) \backslash \tilde{G})$ et l'induite parabolique normalisée de \eqref{eqn:L2-M}. Dans \cite[\S 7]{Li13}, on a défini les coefficients discrets
$$ a^{\tilde{G}}_\text{disc}(\mathring{\pi}), \quad \mathring{\pi} \in \Pi_{\text{disc},t,-}(\tilde{G}^1). $$
Il sera commode de poser
\begin{gather}\label{eqn:coeff-spec-translation}
  a^{\tilde{G}}_\text{disc}(\mathring{\pi}_\lambda) := a^{\tilde{G}}_\text{disc}(\mathring{\pi}), \quad \mathring{\pi} \in \Pi_{\text{disc},t,-}(\tilde{G}^1), \lambda \in i\mathfrak{a}_G^*.
\end{gather}

Les coefficients discrets sont caractérisés par l'égalité
\begin{multline}\label{eqn:coeff-spec-caracterisation}
  \sum_{M \in \mathcal{L}(M_0)} \frac{|W^M_0|}{|W^G_0|} \sum_{s \in W^G(M)_\text{reg}} |\det(1-s|\mathfrak{a}^G_M)|^{-1} \Tr\left(M_{P|P}(s,0) \mathcal{I}_{\tilde{P},\text{disc},t}(0, \mathring{f}^1) \right) \\
  = \sum_{\mathring{\pi} \in \Pi_{\text{disc},t,-}(\tilde{G}^1)} a^{\tilde{G}}_\text{disc}(\mathring{\pi}) \Tr\mathring{\pi}(\mathring{f}^1)
\end{multline}
où
\begin{itemize}
  \item $P \in \mathcal{P}(M)$ est arbitraire;
  \item $M_{P|P}(s,0)$ est l'un des opérateurs d'entrelacement fournis par la théorie des série d'Eisenstein \cite[\S 5]{Li13}.
\end{itemize}
C'est une somme finie si l'on fixe un ensemble fini $\Gamma$ des $\tilde{K}$-types et $\mathring{f}^1 \in \mathcal{H}_{\asp}(\tilde{G}^1)_\Gamma$.

%Lorsque $\mathring{\pi}$ intervient dans $L^2_\text{disc}(A_{G,\infty}G(F) \backslash \tilde{G})$, il résulte de \eqref{eqn:coeff-spec-caracterisation} que
%\begin{gather}\label{eqn:coeff-spec-simple}
%  a^{\tilde{G}}_\text{disc}(\mathring{\pi}) = \text{la multiplicité de } \mathring{\pi} \text{ dans le spectre automorphe}.
%\end{gather}

Fixons $P=MU$. Les opérateurs suivants forment l'ingrédient crucial du développement spectral fin
\begin{gather}
  \mathcal{J}_Q(\tilde{P}, \lambda, \Lambda) := M_{Q|P}(\lambda)^{-1} M_{Q|P}(\lambda+\Lambda), \quad Q \in \mathcal{P}(M),
\end{gather}
où $\lambda, \Lambda \in i\mathfrak{a}_M^*$ et $M_{Q|P}$ est encore l'un des opérateurs d'entrelacement fournis par la théorie des série d'Eisenstein (dans \cite{Li13}, on les note $\mathcal{M}_Q(\tilde{P}, \lambda, \Lambda)$). Ils forment une $(G,M)$-famille à valeurs dans les opérateurs d'auto-entrelacement de l'induite parabolique normalisée de \eqref{eqn:L2-M}.

Soit $\mathring{\pi} \in \Pi_{\text{disc},t,-}(\tilde{M}^1)$, identifiée à une représentation de $\tilde{M}$ comme précédemment, on note
\begin{gather}\label{eqn:J_Q-global}
  \mathcal{J}_Q(\tilde{P}, \mathring{\pi}_\lambda, \Lambda) := \mathcal{J}_Q(\tilde{P}, \lambda, \Lambda) \text{ restreinte à } \mathcal{I}_{\tilde{P}}(\mathring{\pi}_\lambda) .
\end{gather}
D'après la théorie de $(G,M)$-familles, on en déduit l'opérateur d'entrelacement $\mathcal{J}_M(\mathring{\pi}_\lambda, \tilde{P})$.

\begin{definition-fr}
  Soient $P=MU$, $\mathring{\pi}$, $\lambda$ comme ci-dessus. On suppose de plus que $\lambda \in i(\mathfrak{a}^G_M)^*$. Pour $\mathring{f}^1 \in \mathcal{H}_{\asp}(\tilde{G}, A_{G,\infty})$, on pose
  \begin{align*}
    \mathcal{I}_{\tilde{P}}(\mathring{\pi}_\lambda, \mathring{f}^1) & := \int_{\tilde{G}/A_{G,\infty}} \mathring{f}^1(\tilde{x}) \mathcal{I}_{\tilde{P}}(\mathring{\pi}_\lambda, \tilde{x}) \dd\tilde{x}, \\
    J_{\tilde{M}}(\mathring{\pi}_\lambda, \mathring{f}^1) & := \Tr\left( \mathcal{J}_M(\mathring{\pi}_\lambda, \tilde{P}) \mathcal{I}_{\tilde{P}}(\mathring{\pi}_\lambda, \mathring{f}^1) \right),
  \end{align*}
  qui sont bien définis. En fait, la trace se calcule sur un espace vectoriel de dimension finie et $J_{\tilde{M}}(\mathring{\pi}_\lambda, \mathring{f}^1)$ est indépendant du choix de $P \in \mathcal{P}(M)$: il suffit de reprendre la preuve du cas local \cite[Lemme 5.7.1]{Li12b} et utiliser l'équation fonctionnelle pour les opérateurs d'entrelacement globaux $M_{Q|P}(\cdot)$.
\end{definition-fr}

Le développement spectral fin s'énonce comme suit.
\begin{theoreme}\label{prop:dev-spec-fin}
  Soient $t \geq 0$, $\mathring{f}^1 \in \mathcal{H}_{\asp}(\tilde{G}, A_{G,\infty})$. Alors
  $$ J_t(\mathring{f}^1) = \sum_{L \in \mathcal{L}(M_0)} \frac{|W^L_0|}{|W^G_0|} \sum_{\mathring{\pi} \in \Pi_{\mathrm{disc},t,-}(\tilde{L}^1)} \; \int_{i(\mathfrak{a}^G_L)^*} a^{\tilde{L}}_{\mathrm{disc}}(\mathring{\pi}_\lambda) J_{\tilde{L}}(\mathring{\pi}_\lambda, \mathring{f}^1) \dd\lambda $$
  comme une intégrale absolument convergente.
\end{theoreme}
\begin{proof}
  Le point de départ est \cite[Théorème 6.9]{Li13} qui donne
  \begin{multline*}
    J_t(\mathring{f}^1) = \sum_{L \in \mathcal{L}(M_0)} \frac{|W^L_0|}{|W^G_0|} \sum_{M \in \mathcal{L}^L(M_0)} \frac{|W^M_0|}{|W^L_0|} \sum_{s \in W^L(M)_\text{reg}} |\det(1-s|\mathfrak{a}^L_M)|^{-1} \\
    \sum_{\mathring{\sigma} \in \Pi_{\text{unit},-}(\tilde{M}^1)} \; \int_{i(\mathfrak{a}^G_L)^*} \Tr\left( \mathcal{J}_L(\tilde{P},\lambda) M_{P|P}(s,0) \mathcal{I}_{\tilde{P},\text{disc},t}(\lambda, \mathring{f}^1)_{\mathring{\sigma}} \right) \dd\lambda
  \end{multline*}
  comme une intégrale absolument convergente, où $P \in \mathcal{P}(M)$ est quelconque. La représentation $\mathcal{I}_{\tilde{P},\text{disc},t}(\lambda)_{\mathring{\sigma}}$ est la $t$-part de l'induite de la composante $\mathring{\sigma}$-isotypique de \eqref{eqn:L2-M}. L'expression $\Tr(\cdots)$ est indépendante de $P$ d'après \cite[Corollaire 6.10]{Li13}. L'opérateur $\mathcal{J}_L(\tilde{P},\lambda)$ (avec $\lambda \in i(\mathfrak{a}^G_L)^*$) se déduit de la $(G,L)$-famille
  \begin{gather}\label{eqn:G-L}
    \forall Q \in \mathcal{P}(L), \; \mathcal{J}_Q(\tilde{P}, \lambda, \Lambda) := \mathcal{J}_R(\tilde{P}, \lambda, \Lambda), \;  \text{ où } R \in \mathcal{P}(M), R \subset Q \text{ est quelconque}.
  \end{gather}
  Cf. \cite[\S 4.2]{Li14a}.

  On fixe $M \subset L$ et un sous-groupe parabolique $R^L \in \mathcal{P}^L(M)$. Rappelons qu'avec ces choix, il y a une application $\mathcal{P}(L) \to \mathcal{P}(M)$ envoyant $Q \in \mathcal{P}(L)$ sur l'unique élément $R = Q(R^L) \in \mathcal{P}(M)$ vérifiant $R \subset Q$ et $R \cap L = R^L$. Plus explicitement, $Q(R^L)$ est un produit semi-direct
  $$ Q(R^L) = R^L U_Q .$$
  Dans ce qui suit, on fixe $Q_0 \in \mathcal{P}(L)$ et on suppose que $P = Q_0(R^L)$.

  Soient $s \in W^L(M)_\text{reg}$, $\lambda \in i(\mathfrak{a}^G_L)^*$, considérons l'opérateur
  \begin{gather}\label{eqn:op-trace}
    \mathcal{J}_L(\tilde{P},\lambda) M_{P|P}(s,0) \mathcal{I}_{\tilde{P},t}(\lambda, \mathring{f}^1).
  \end{gather}

  En induisant par étapes, $M_{P|P}(s,0) \mathcal{I}_{\tilde{P},t}(\lambda, \mathring{f}^1)$ s'identifie à $\mathcal{I}_{\tilde{Q}_0}(M^L_{R^L|R^L}(s,0))$ composé avec $\mathcal{I}_{\tilde{Q}_0,t}(\lambda, \mathring{f}^1)$. D'autre part, pour calculer la trace de \eqref{eqn:op-trace}, il suffit de regarder l'action de $\mathcal{J}_L(\tilde{P},\lambda)$ sur les espaces de la forme $\mathcal{I}_{\tilde{Q}_0}(\mathring{\pi})$ où $\mathring{\pi} \in \Pi_{\text{disc},t,-}(\tilde{L}^1)$, cf. la définition de $\Pi_{\text{disc},t,-}(\tilde{L}^1)$. Dans la description \eqref{eqn:G-L} de la $(G,L)$-famille $\{\mathcal{J}_Q(\tilde{P},\lambda,\Lambda) : Q \in \mathcal{P}(L)\}$, on peut prendre $R \in \mathcal{P}(M)$ de la forme $R = Q(R^L)$. Montrons le fait suivant (cf. \cite[pp.520-521]{Ar88-TF2})
  $$ M_{R|P}(\lambda+\Lambda) = M_{Q(R^L)|Q_0(R^L)}(\lambda+\Lambda) = M_{Q|Q_0}(\lambda+\Lambda) \quad \text{ sur } \mathcal{I}_{\tilde{Q}_0}(\mathring{\pi}), $$
  où $\Lambda \in i\mathfrak{a}_L^*$.

  En effet, la description de $Q \mapsto Q(R^L)$ montre que $\Sigma_{Q(R^L)}$ est l'union disjointe de $\Sigma^L_{R^L}$ et $\Sigma_Q$; idem pour $Q_0(R^L)$. Donc
  $$ \Sigma_{Q_0(R^L)} \cap (-\Sigma_{Q(R^L)}) = \Sigma_{Q_0} \cap (-\Sigma_Q). $$
  Soit $\mu \in \mathfrak{a}^*_{L,\C}$ tel que
  $$ \angles{\Re(\mu), \alpha^\vee} > 0, \quad \alpha \in \Sigma^{\text{red}}_{Q_0} \cap (-\Sigma^{\text{red}}_Q).$$
  Notons $\mathcal{U}$ le produit direct des sous-groupes radiciels $\{U_\alpha : \alpha \in \Sigma_{Q_0} \cap (-\Sigma_Q)\}$. D'après l'identification explicite de l'induction par étapes, $M_{R|P}(\mu)$ restreint à $\mathcal{I}_{\tilde{Q}_0}(\mathring{\pi})$ est égal à l'opérateur
  $$ \phi(\tilde{x}) \longmapsto \int_{\mathcal{U}(\A)} \phi(u\tilde{x}) e^{\angles{\mu, H_{Q_0}(ux) - H_Q(x)}} \dd u. $$
  Or cela s'identifie à $M_{Q|Q_0}(\mu)$. L'identification en $\mu=\lambda+\Lambda$ en résulte par prolongement méromorphe.

  Maintenant, en réfléchissant sur \eqref{eqn:coeff-spec-caracterisation}, on voit que la somme correspondant à $L$ dans $J_t(\mathring{f}^1)$ vaut
  $$ \sum_{\mathring{\pi} \in \Pi_{\mathrm{disc},t,-}(\tilde{L}^1)} \; \int_{i(\mathfrak{a}^G_L)^*} a^{\tilde{L}}_{\mathrm{disc}}(\mathring{\pi}_\lambda) J_{\tilde{L}}(\mathring{\pi}_\lambda, \mathring{f}^1) \dd\lambda, $$
  ce qui achève la démonstration.
\end{proof}

\begin{remarque}\label{rem:convergence-spec}
  Le Théorème \ref{prop:dev-spec-fin} donne une expression pour $J(\mathring{f}^1) = \sum_{t \geq 0} J_t(\mathring{f}^1)$ comme une intégrale itérée. C'est déjà observé dans \cite{Li13} que, vu les arguments de \cite{FLM11}, cette intégrale est probablement absolument convergente, donc les indices $t$ sont probablement évitables. Cependant nous ne poursuivons pas cette question dans cet article. 

  Vu cette simplification potentielle, on ne parlera pas de la majoration de convergence par multiplicateurs d'Arthur \cite[pp.199-200]{Ar02} et \cite[\S 6]{Ar88-TF2}. Le lecteur peut reproduire ces majorations en reprenant ses arguments.
\end{remarque}

\subsection{Paramètres spectraux non ramifiés}
On fixe $t \geq 0$, un ensemble fini de places $V \supset V_\text{ram}$ de $F$, et on définit
\begin{align*}
  \mathcal{C}_-(\tilde{G}^V) & := \{c \in \Pi_-(\tilde{G}^V) : c \text{ est non ramifiée } \}, \\
  \Pi_{\text{disc},t,-}(\tilde{G}^1, V) & := \{\pi \in \Pi_-(\tilde{G}^1_V) : \exists \mathring{\pi} \in \Pi_{\text{disc},t,-}(\tilde{G}^1) \text{ tel que } \mathring{\pi}_V = \pi \}, \\
  \mathcal{C}^V_{\text{disc},-}(\tilde{G}) & := \{ c \in \mathcal{C}_-(\tilde{G}^V) : \exists \mathring{\pi} \in \Pi_{\text{disc},t,-}(\tilde{G}^1), \lambda \in i\mathfrak{a}_G^* \text{ tels que } (\mathring{\pi}_\lambda)^V = c \}.
\end{align*}
On prendra garde que les définitions de $\Pi_{\text{disc},t,-}(\tilde{G}^1, V)$ et $\mathcal{C}^V_{\text{disc},-}(\tilde{G})$ ne sont pas symétriques: $\mathcal{C}_-(\tilde{G}^V)$ et $\mathcal{C}^V_{\text{disc},-}(\tilde{G})$ admettent une $i\mathfrak{a}_G^*$-action $c \mapsto c_\lambda$, mais $\Pi_{\text{disc},t,-}(\tilde{G}^1, V)$ ne l'admet pas. De telles définitions s'appliquent également aux sous-groupes de Lévi de $\tilde{G}$.

Soient $M \in \mathcal{L}(M_0)$, $c \in \mathcal{C}_-(\tilde{M}^V)$ et $\lambda \in \mathfrak{a}_{M,\C}^*$. On peut définir $c_\lambda \in \mathcal{C}_-(\tilde{M}^V)$. Rappelons que dans \cite[\S 3.4]{Li12b}, on a construit les facteurs normalisants faibles non ramifiés en une place $v \notin V$
$$ r_{Q|P}(c_{v,\lambda}), \quad P,Q \in \mathcal{P}(M). $$

\begin{lemme}
  Soit $c \in \mathcal{C}^V_{\mathrm{disc},-}(\tilde{M})$. Pour tous $P, Q \in \mathcal{P}(M)$, le produit infini
  $$ r_{Q|P}(c_\lambda) := \prod_{v \notin V} r_{P|Q}(c_{v,\lambda}) $$
  est absolument convergent si $\angles{\Re(\lambda), \alpha^\vee} \gg 0$ pour tout $\alpha \in \Sigma^{\mathrm{red}}_P$, et définit une fonction méromorphe en $\lambda$. De plus, $r_{Q|P}(c_\lambda)$ ne dépend que de la projection de $\lambda$ sur $(\mathfrak{a}^G_{M,\C})^*$.
\end{lemme}
\begin{proof}
  C'est loisible de supposer qu'il existe $\mathring{\sigma} \in \Pi_{\text{disc},t,-}(\tilde{M}^1)$ ayant la décomposition $\mathring{\sigma} = \sigma \otimes c$. On a démontré une assertion similaire dans \cite[Lemme 6.1]{Li13} pour le facteur
  \begin{gather}\label{eqn:r-adelique}
    r_{Q|P}(\mathring{\sigma}_\lambda) := \prod_{v \in V} r_{Q|P}(\sigma_{v,\lambda}) \cdot r_{Q|P}(c_\lambda)
  \end{gather}
  en choisissant des facteurs normalisants $r_{Q|P}(\sigma_{v,\lambda})$ pour $v \in V$. Fixons $v \in V$. Selon \cite[Définition 3.1.1]{Li12b}, la fonction $\lambda \mapsto r_{Q|P}(\sigma_{v,\lambda})$ est méromorphe et $\mathfrak{a}^*_{G,\C}$-invariante. Les propriétés (\textbf{R4}) et (\textbf{R6}) dans op. cit. entraînent qu'elle est inversible pourvu que $\angles{\Re(\lambda), \alpha^\vee} \gg 0$ pour tout $\alpha \in \Sigma^{\mathrm{red}}_P$. L'assertion en résulte.
\end{proof}
\begin{remarque}
  Pour les groupes réductifs, les facteurs $r_{Q|P}(c)$ s'expriment en termes des fonctions $L$ partielles. On s'attend au même phénomène pour certains revêtements. Malgré l'importance de cette interprétation en pratique, on n'en a pas besoin dans cet article: il suffit d'extraire ses propriétés analytiques à l'aide des opérateurs d'entrelacement globaux.
\end{remarque}

Pour $c \in \mathcal{C}^V_{\text{disc},-}(\tilde{M})$, on introduit la $(G,M)$-famille
\begin{gather}
  r_Q(\Lambda, c) := r_{Q|\bar{Q}}(c)^{-1} r_{Q|\bar{Q}}\left(c_{\frac{\Lambda}{2}}\right), \quad Q \in \mathcal{P}(M), \Lambda \in i\mathfrak{a}^*_M .
\end{gather}
La théorie des $(G,M)$-familles fournit alors la fonction méromorphe en $\lambda \in \mathfrak{a}^*_{M,\C}$
$$ r^G_M(c_\lambda). $$

Afin de passer au cas adélique, considérons la situation suivante. On suppose que $\mathring{\sigma} = \sigma \otimes c$ pour un certain $\mathring{\sigma} \in \Pi_{\text{disc},t,-}(\tilde{M}^1)$ et $\sigma = \mathring{\sigma}_V$. En utilisant les facteurs normalisants adéliques \eqref{eqn:r-adelique}, on définit les avatars $r_Q(\Lambda, \sigma_\lambda)$ et $r_Q(\Lambda, \mathring{\sigma}_\lambda)$ de $r_Q(\Lambda, c_\lambda)$, qui vérifient
\begin{gather}\label{eqn:r-3familles}
  r_Q(\Lambda, \mathring{\sigma}_\lambda) = r_Q(\Lambda, \sigma_\lambda) r_Q(\Lambda, c_\lambda).
\end{gather}
On définit ainsi $r^G_M(\mathring{\sigma}_\lambda)$ et $r^G_M(\sigma_\lambda)$.

Fixons $P \in \mathcal{P}(M)$. Dans \cite{Li13} on considère une autre $(G,M)$-famille
$$ r_Q(\Lambda, \mathring{\sigma}_\lambda, P) := r_{Q|P}(\mathring{\sigma}_\lambda)^{-1} r_{Q|P}(\mathring{\sigma}_{\lambda+\Lambda}), \quad Q \in \mathcal{P}(M). $$

On introduit la $(G,M)$-famille $\nu_Q(\Lambda, \mathring{\sigma}_\lambda, P)$ de sorte que
\begin{gather}\label{eqn:r_Q-descente}
  r_Q(\Lambda, \mathring{\sigma}_\lambda) = \nu_Q(\Lambda, \mathring{\sigma}_\lambda, P) r_Q(\Lambda, \mathring{\sigma}_\lambda, P).
\end{gather}

Soient $L \in \mathcal{L}(M)$, $Q_L \in \mathcal{P}(L)$. Toutes les $(G,M)$-familles en vue sont des familles radicielles considérées dans \cite[\S 7]{Ar82-Eis2}. En particulier, \cite[Corollary 7.4]{Ar82-Eis2} affirme que $\nu^{Q_L}_M(\cdots)$, etc., ne dépendent pas du choix de $Q_L$. On les notera $\nu^L_M(\cdots)$, etc. On a
$$ \nu^L_M(\mathring{\sigma}_\lambda, P) := \lim_{\Lambda \to 0} \sum_{\substack{Q \in \mathcal{P}(M) \\ Q \subset Q_L}} \nu_Q(\Lambda, \mathring{\sigma}_\lambda, P) \theta_{Q \cap L}(\Lambda)^{-1}. $$
A priori, c'est défini pour $\lambda$ en position générale.

\begin{lemme}\label{prop:mu-nu}
  Avec les conventions précédentes, on a
  $$ \nu^L_M(\mathring{\sigma}_\lambda, P) = \begin{cases} 1, & \text{si } L=M, \\ 0, & \text{si } L \neq M, \end{cases} $$
  pour tout $L \in \mathcal{L}(M)$.
\end{lemme}
\begin{proof}
  On définit la $(G,M)$-famille
  \begin{gather}\label{eqn:mu-global}
    \mu_Q(\Lambda, \mathring{\sigma}_\lambda, P) := \mu_{Q|P}(\mathring{\sigma}_\lambda)^{-1} \mu_{Q|P}\left(\mathring{\sigma}_{\lambda+\frac{\Lambda}{2}}\right), \quad Q \in \mathcal{P}(M)
    %\mu_Q(\Lambda, \mathring{\sigma}_\lambda, P) := \prod_v \mu_{Q|P}(\mathring{\sigma}_{v,\lambda})^{-1} \mu_{Q|P}\left( \mathring{\sigma}_{v,\lambda+\frac{\Lambda}{2}} \right).
  \end{gather}
  avec
  \begin{gather}\label{eqn:mu_QP-global}
    \mu_{Q|P}(\mathring{\sigma}_\lambda) := \left( r_{Q|P}(\mathring{\sigma}_\lambda) r_{P|Q}(\mathring{\sigma}_\lambda) \right)^{-1}.
  \end{gather}
  Comme pour $\nu_Q(\Lambda, \mathring{\sigma}_\lambda, P)$, on en déduit $\mu^L_M(\mathring{\sigma}_\lambda, P)$ pour $\lambda$ en position générale. Montrons que
  \begin{gather}\label{eqn:mu-nu}
    \nu^L_M(\mathring{\sigma}_\lambda, P) = \mu^L_M(\mathring{\sigma}_\lambda, P).
  \end{gather}

  On adapte les arguments pour \cite[Lemma 2.1]{Ar98} au cadre adélique. On pose
  \begin{align*}
    c_Q(\Lambda, \mathring{\sigma}_\lambda, P) & := \left( r_{Q|P}(\mathring{\sigma}_\lambda)^{-1} r_{Q|P}(\mathring{\sigma}_{\lambda+\Lambda}) \right)^{-1} \left( r_{Q|P}(\mathring{\sigma}_\lambda) r_{Q|P} \left( \mathring{\sigma}_{\lambda+\frac{\Lambda}{2}} \right) \right)^2, \\
    \bar{\nu}_Q(\Lambda, \mathring{\sigma}_\lambda, P) & := r_{Q|\bar{Q}}(\mathring{\sigma}_\lambda)^{-1} r_{Q|\bar{Q}}\left( \mathring{\sigma}_{\lambda+\frac{\Lambda}{2}} \right) r_{Q|P}(\mathring{\sigma}_\lambda)^2 r_{Q|P} \left( \mathring{\sigma}_{\lambda+\frac{\Lambda}{2}} \right)^{-2} .
  \end{align*}
  On vérifie que $\nu_Q(\Lambda, \mathring{\sigma}_\lambda, P) = c_Q(\Lambda, \mathring{\sigma}_\lambda, P) \bar{\nu}_Q(\Lambda, \mathring{\sigma}_\lambda, P)$. La formule de descente \cite[Corollaire 4.2.5]{Li14a} dit que
  $$ \nu^L_M(\mathring{\sigma}_\lambda, P) = \sum_{L_1 \in \mathcal{L}^L(M)} c^{L_1}_M(\mathring{\sigma}_\lambda, P) \bar{\nu}^L_{L_1}(\mathring{\sigma}_\lambda, P). $$

  En appliquant \cite[Corollary 7.4]{Ar82-Eis2} à la $(G,M)$-famille radicielle $c_Q(\Lambda, \mathring{\sigma}_\lambda, P)$, on voit que $c^{L_1}_M(\mathring{\sigma}_\lambda, P)$ est un polynôme homogène de degré $\dim \mathfrak{a}^{L_1}_M$ en les dérivées $c'_\alpha(0)$, où on pose
  \begin{gather*}
    d_\alpha: \C \to \C \text{ telle que } d_\alpha(\angles{\Lambda, \alpha^\vee}) = r_\alpha(\mathring{\sigma}_\Lambda), \\
    c_\alpha(t) := (d_\alpha(0)^{-1} d_\alpha(t))^{-1} \left( d_\alpha(0) d_\alpha\left( \frac{t}{2}\right) \right)^2, \quad \alpha \in \Sigma^{\text{red}}_Q \cap \Sigma^{\text{red}}_{\bar{P}}.
  \end{gather*}
  Ici $r_\alpha(\mathring{\sigma}_\lambda) = \prod_v r_\alpha(\mathring{\sigma}_{v,\lambda})$ provient des facteurs normalisants locaux $r_\alpha$ satisfaisant à $r_{Q|P}(\cdots) = \prod_{\alpha \in \Sigma^{\text{red}}_Q \cap \Sigma^{\text{red}}_{\bar{P}}} \; r_\alpha(\cdots)$.

  On vérifie aisément que $c'_\alpha(0)=0$ pour tout $\alpha$ (le rôle du facteur $\frac{1}{2}$ y est assez visible). Donc $\nu^L_M(\mathring{\sigma}_\lambda, P) = \bar{\nu}^L_M(\mathring{\sigma}_\lambda, P)$.

  Rappelons que $r_{Q|\bar{Q}} = r_{Q|P} r_{P|\bar{Q}}$ et $r_{P|\bar{P}} = r_{P|Q} r_{Q|\bar{P}}$. Ces identités permettent d'écrire $\bar{\nu}_Q(\Lambda, \mathring{\sigma}_\lambda, P)$ comme le produit de 
  $$ r_{P|\bar{P}}(\mathring{\sigma}_\lambda)^{-1} r_{P|\bar{P}}\left( \mathring{\sigma}_{\lambda+\frac{\Lambda}{2}} \right) \; \text{ et } \; \mu_Q(\Lambda, \mathring{\sigma}_\lambda, P). $$

  Le premier terme est indépendant de $Q$ et tend vers $1$ quand $\Lambda \to 0$. Il en résulte que $\bar{\nu}^L_M(\mathring{\sigma}_\lambda, P) = \mu^L_M(\mathring{\sigma}_\lambda, P)$, ce qui démontre \eqref{eqn:mu-nu}.

  Enfin, montrons que \eqref{eqn:mu-nu} entraîne l'assertion du Lemme. Pour cela, nous avons besoins des résultats de \cite[\S 6]{Li13}. C'est loisible de supposer que $\lambda$ est en position générale. Notons $R_{P|Q}(\cdots)$ l'opérateur d'entrelacement normalisé global et multiplions les deux côtés de \eqref{eqn:mu_QP-global} par $(R_{Q|P}(\mathring{\sigma}_\lambda) R_{P|Q}(\mathring{\sigma}_\lambda))^{-1}$, qui vaut l'identité. Le côté à droite devient $(M_{Q|P}(\mathring{\sigma}_\lambda) M_{P|Q}(\mathring{\sigma}_\lambda))^{-1}$, ce qui vaut aussi l'identité d'après l'équation fonctionnelle des opérateurs d'entrelacement globaux. Il en résulte que $\mu_{Q|P}(\mathring{\sigma}_\lambda)=1$ pour tout $\lambda$, ce qui permet de conclure.
\end{proof}

\begin{lemme}[Cf. {\cite[Lemma 3.2]{Ar02}}]\label{prop:majoration-r^G_M}
  Pour $M,c$ comme ci-dessus, $\lambda \mapsto r^G_M(c_\lambda)$ est analytique sur $i\mathfrak{a}_M^*$ et vérifie
  $$ \exists N \; \text{ tel que } \int_{i(\mathfrak{a}^G_M)^*} r^G_M(c_\lambda)(1+\|\lambda\|)^{-N} \dd\lambda < +\infty ,$$
  où $\|\cdot\|$ est une norme euclidienne sur $i(\mathfrak{a}^G_M)^*$.
\end{lemme}
\begin{proof}
  On suppose toujours que $\mathring{\sigma} = \sigma \otimes c$ pour un certain $\mathring{\sigma} \in \Pi_{\text{disc},t,-}(\tilde{M}^1)$. Vu les identités \eqref{eqn:r-3familles}, \eqref{eqn:r_Q-descente} et la formule de descente, le Lemme \ref{prop:mu-nu} donne
  $$ \sum_{L_1, L_2 \in \mathcal{L}(M)} d^G_M(L_1,L_2) r^{L_1}_M(\sigma_\lambda) r^{L_2}_M(c_\lambda) = r^G_M(\mathring{\sigma}_\lambda) = r^G_M(\mathring{\sigma}_\lambda, P). $$

  Le seul terme à gauche avec $L_2 = G$ est $r^G_M(c_\lambda)$. On a démontré dans \cite[Théorème 6.3]{Li13} que $r^G_M(\mathring{\sigma}_\lambda, P)$ vérifie les propriétés du Lemme. Par récurrence, il en résulte que $r^G_M(c_\lambda)$ est analytique en $\lambda \in i\mathfrak{a}^*_M$. Il reste à établir la majoration.

  Prenons $N=N_1+N_2$. À l'aide de l'isomorphisme
  $$ \frac{i\mathfrak{a}^*_M}{i\mathfrak{a}^*_G} \stackrel{\sim}{\twoheadrightarrow} \frac{i\mathfrak{a}^*_M}{i\mathfrak{a}^*_{L_1}} \oplus \frac{i\mathfrak{a}^*_M}{i\mathfrak{a}^*_{L_2}}, \quad \text{ si } d^G_M(L_1, L_2) \neq 0, $$
  l'intégrale dans l'assertion est majorée par la somme de
  $$ \int_{i(\mathfrak{a}^G_M)^*} |r^G_M(\mathring{\sigma}_\lambda, P)| (1+\|\lambda\|)^{-N} \dd\lambda $$
  et
  $$ \sum_{L_1,L_2} d^G_M(L_1,L_2) \int_{i(\mathfrak{a}^{L_1}_M)^*} |r^{L_1}_M(\sigma_\lambda)| (1+\|\lambda\|)^{-N_1} \dd\lambda \cdot  \int_{i(\mathfrak{a}^{L_2}_M)^*} |r^{L_2}_M(c_\lambda)| (1+\|\lambda\|)^{-N_2} \dd\lambda, $$
  où $L_1, L_2 \in \mathcal{L}(M)$ sont tels que $L_2 \neq G$, $\mathfrak{a}^G_M = \mathfrak{a}^{L_1}_M \oplus \mathfrak{a}^{L_2}_M$, puisque sinon $d^G_M(L_1,L_2)=0$.

  On vient de remarquer que l'intégrale contenant $r^G_M(\mathring{\sigma}_\lambda, P)$ est finie. Les intégrales contenant $r^{L_2}_M(c_\lambda)$ sont aussi finies par récurrence. Donnons une esquisse pour la majoration de  $r^{L_1}_M(\sigma_\lambda)$. Cf. \cite[p.10]{Ar98}. On raisonne en plusieurs étapes
  \begin{enumerate}
    \item Rappelons que $r^{L_1}_M(\sigma_\lambda)$ se déduit de la $(G,M)$-famille
      $$ r_Q(\Lambda, \sigma_\lambda) = r_{Q|\bar{Q}}(\sigma_\lambda)^{-1} r_{Q|\bar{Q}}\left( \sigma_{\lambda+\frac{\Lambda}{2}} \right) = \prod_{\alpha \in \Sigma_Q^\text{red}} r_\alpha(\sigma_\lambda)^{-1} r_\alpha\left( \sigma_{\lambda+\frac{\Lambda}{2}} \right) $$
      où $Q \in \mathcal{P}(M)$, $\Lambda \in i(\mathfrak{a}^G_M)^*$. C'est une $(G,M)$-famille radicielle scalaire considérée dans \cite[\S 7]{Ar82-Eis2}. On renvoie à \cite[\S 3]{Li12b} pour les définitions précises des termes ci-dessus.
    \item La formule \cite[Corollary 7.4]{Ar82-Eis2} donne
      $$ r^{L_1}_M(\sigma_\lambda) = \sum_F \mes(\mathfrak{a}^{L_1}_M/\Z F^\vee_M) \prod_{\beta \in F} r_\beta(c_\lambda)^{-1} \dot{r}_\beta(\sigma_\lambda) $$
      où $F$ parcourt les sous-ensembles de $\Sigma^\text{red}_M$, l'ensemble de racines réduites restreintes à $\mathfrak{a}_M$, tels que $F^\vee_M := \{ \beta^\vee_M : \beta \in F\}$ est une base de $\mathfrak{a}^{L_1}_M$.
      Expliquons les notations:
      \begin{itemize}
        \item $\beta^\vee_M$ signifie la projection de $\beta^\vee$ sur $\mathfrak{a}_M$,
        \item $\Z F^\vee_M$ signifie le réseau engendré par $F^\vee_M$,
        \item pour $\sigma$ et $\lambda$ fixés, $r_\beta\left (\sigma_{\lambda+\frac{\Lambda}{2}} \right)$ ne dépend que de $\angles{\Lambda,\beta^\vee}$; on la regarde donc comme une fonction en $i\R$ puisque $\Lambda$ est supposé imaginaire, ce qui permet de parler de la dérivée $\dot{r}_\beta$.
      \end{itemize}
    \item Comme dans \cite[pp.1329-1330]{Ar82-Eis2}, ladite formule nous ramène au cas où $M$ est un Lévi propre maximal dans $L_1$. Notons $\pm\alpha$ les éléments dans $\Sigma^\text{red}_M$. Il y a exactement deux possibilités pour $F$: $F=\{\alpha\}$ ou $F=\{-\alpha\}$. On regarde $r_{\pm\alpha}$ comme des fonctions en $\lambda \in i\R$. Alors $r_{-\alpha}(\lambda) = \overline{r_\alpha(\lambda)}$ pour tout $\lambda \in i\R$ d'après \cite[Définition 3.3.1 \textbf{(R2)}]{Li12b}. Il reste donc à majorer $\Re(r_\alpha(\lambda)^{-1} \dot{r}_\alpha(\lambda))$.

    \item Comme $r^{L_1}_M(\sigma_{v,\lambda})$ est analytique pour $\lambda$ imaginaire en chaque $v \in V$ (c'est \cite[Corollary 2.4]{Ar98}), on sait que $r^{L_1}_M(\sigma_{v,\lambda})$ est bornée pour $v$ non-archimédienne. On se ramène ainsi au cas $V=V_\infty$.

    \item D'après la construction des facteurs normalisants archimédiens \cite[\S 3.2]{Li12b}, on sait que $r_\alpha(\lambda)$ est un produit des fonctions de la forme $\Gamma(\alpha\lambda + \beta)^{\pm 1}$ à une constante multiplicative près, où $\alpha \in \R$, $\beta \in \C$. On se ramène ainsi à la majoration de la dérivée logarithmique de la fonction $\Gamma$ dans un domaine de la forme
    $$ \mathcal{A} = \mathcal{A}(a_1,a_2,b) := \{ z=\sigma+it \in \C : a_1 < \sigma < a_2, \; |t| > b \}, \quad a_1, a_2 \in \R,\; b \in \R_{>0}. $$

    \item Prenons un autre domaine $\mathcal{A}^+ = \mathcal{A}(a'_1, a'_2, b') \supset \mathcal{A}$ avec $a'_1 < a_1$, $a_2 < a'_2$ et $0 < b' < b$. Posons
    $$ \delta(z) := \log\Gamma(z) -  \left( \left(z - \frac{1}{2}\right) \log z - z \right), \quad z \in \mathcal{A}^+. $$
    C'est une fonction holomorphe sur $\mathcal{A}^+$; il y a un choix de $\log$ sur $\mathcal{A}^+$, mais peu importe.

    La formule de Stirling fournit une constante $C$ telle que $|\delta(z)| \leq C$ pour $z \in \mathcal{A}^+$. D'après la formule intégrale de Cauchy, on en déduit une nouvelle constante $C'$ telle que $|\delta'(z)| \leq C'$ pour $z \in \mathcal{A}$. Puisque la dérivée de $(z - \frac{1}{2}) \log z - z$ est évidemment à croissance modérée en la partie imaginaire de $z \in \mathcal{A}$, on en déduit la même propriété pour la dérivée logarithmique de $\Gamma$. La majoration cherchée pour $\lambda \mapsto \Re(r_\alpha(\lambda)^{-1} \dot{r}_\alpha(\lambda))$ ($\lambda \in i\R$) en résulte.
  \end{enumerate}
\end{proof}

\subsection{Compression des coefficients spectraux}
Soient $t \geq 0$, $V \supset V_\text{ram}$ comme précédemment. Soit $M \in \mathcal{L}(M_0)$. Rappelons que les éléments de $\Pi_-(\tilde{M}^1_V)$ sont regardés comme des représentations de $\tilde{M}_V$ sur lesquelles $A_{M,\infty}$ opère trivialement. On pose
$$ \Pi_{\text{disc},t,-}^{\tilde{G}^1}(\tilde{M}, V) := \{\sigma_\lambda : \sigma \in \Pi_{\text{disc},t,-}(\tilde{M}^1,V), \lambda \in i(\mathfrak{a}^G_M)^* \} \; \subset \Pi_{\text{unit},-}(\tilde{M}_V). $$

Étant donnés $P \in \mathcal{P}(M)$ et $\sigma \in \Pi_{\text{disc},t,-}^{\tilde{G}^1}(\tilde{M},V)$, l'induite parabolique normalisée
$$ \sigma^G := \mathcal{I}_{\tilde{P}}(\sigma) $$
est une représentation unitaire de longueur finie de $\tilde{G}_V$ sur laquelle $A_{G,\infty}$ opère trivialement. Pour $\pi \in \Pi_-(\tilde{G}_V^1)$, on pose
$$ (\sigma^G : \pi) := \text{ la multiplicité de } \pi \text{ dans } \sigma^G . $$

On définit
\begin{gather}
  \Pi_{t,-}(\tilde{G}^1, V) := \left\{ \pi \in \Pi_{\text{unit},-}(\tilde{G}_V^1) : \exists M \in \mathcal{L}(M_0), \sigma \in  \Pi_{\text{disc},t,-}^{\tilde{G}^1}(\tilde{M}, V) \text{ tels que } (\sigma^G : \pi) \neq 0 \right\}.
\end{gather}

Adoptons la convention suivante: une fonction $h: \Pi_{t,-}(\tilde{G}^1, V) \to \C$ se prolonge par zéro à $\Pi_{\text{unit}, -}(\tilde{G}_V)$, puis par linéarité aux représentations unitaires spécifiques de longueur finie de $\tilde{G}_V$.

\begin{definition-fr}\label{def:coeff-spec}
  On définit une mesure sur $\Pi_{t,-}(\tilde{G}^1,V)$, notée abusivement par $\pi \mapsto a^{\tilde{G}}(\pi)\dd\pi$ (à expliquer dans la remarque suivante), telle que
  \begin{multline}\label{eqn:mesure-Pi}
    \int_{\Pi_{t,-}(\tilde{G}^1,V)} h(\pi) a^{\tilde{G}}(\pi) \dd\pi = \\
    \sum_{M \in \mathcal{L}(M_0)} \frac{|W^M_0|}{|W^G_0|} \sum_{\substack{\sigma \in \Pi_{\text{disc},t,-}(\tilde{M}^1, V) \\ c \in \mathcal{C}^V_{\text{disc},t,-}(\tilde{M})}} \; \int_{i(\mathfrak{a}^G_M)^*} a^{\tilde{M}}_{\text{disc}}(\sigma_\lambda \otimes c_\lambda) r^G_M(c_\lambda) h(\sigma^L_\lambda) \dd\lambda,
  \end{multline}
  si $h: \Pi_{t,-}(\tilde{G}^1,V) \to \C$ induit une fonction
  $$ h' : \bigsqcup_{M \in \mathcal{L}(M_0)} \Pi_{\text{unit},-}(\tilde{M}_V) \to \C $$
  par $h'(\sigma)=h(\sigma^G)$, qui vérifie
  \begin{itemize}
    \item pour tout $M$, il existe un ensemble fini $\Gamma$ de $\tilde{K} \cap \tilde{M}_V$-types tel que $h'(\sigma)=0$ si $\sigma$ ne contient pas de $\tilde{K} \cap \tilde{M}_V$-types dans $\Gamma$;
    \item pour tout $M$, la fonction $\lambda \mapsto h'(\sigma_\lambda)$ est à décroissance rapide sur $i(\mathfrak{a}^G_M)^*$.
  \end{itemize}
 
  Pour de telles fonctions $h$, la convergence de l'intégrale ci-dessus en $(\sigma,c,\lambda)$ résulte du Lemme \ref{prop:majoration-r^G_M} et de \cite[Proposition 7.4]{Li13}.
\end{definition-fr}
\begin{remarque}\label{rem:coeff-spec}
  Cette notation suggère que l'on puisse bien définir le symbole $\pi \mapsto a^G(\pi)$ (le ``coefficient spectral'') comme une certaine dérivée de Radon-Nikodym de notre mesure. C'est ce qu'Arthur fait dans \cite[\S 3]{Ar02}, pour l'essentiel. Nous n'abordons pas ce point de vue. Cette notation est adoptée seulement en raison de compatibilité. Pour la même raison, on écrira aussi $\int a^{\tilde{G}}(\pi) h(\pi) \dd\pi$ au lieu de $\int h(\pi) a^{\tilde{G}}(\pi) \dd\pi$.
\end{remarque}

\begin{proposition-fr}\label{prop:caractere-pondere-local-global}
  Soit $f^1 \in \mathcal{H}_{\asp}(\tilde{G}_V, A_{G,\infty})$, on pose $\mathring{f}^1 := f^1 f_{K^V} \in \mathcal{H}_{\asp}(\tilde{G}, A_{G,\infty})$. Pour tous $M \in \mathcal{L}(M_0)$, $\lambda \in i(\mathfrak{a}^G_M)^*$ et
  $$ \mathring{\sigma} = \sigma \otimes c, \quad \sigma \in \Pi_{\mathrm{disc},t,-}(\tilde{M}^1, V), c \in \mathcal{C}^V_{\mathrm{disc},-}(\tilde{M}), $$
  on a
  $$ J_{\tilde{M}}(\mathring{\sigma}_\lambda, \mathring{f}^1) = \sum_{L \in \mathcal{L}(M)} r^L_M(c_\lambda) J_{\tilde{L}}(\sigma_\lambda^L, f^1), $$
  où $J_{\tilde{L}}(\sigma_\lambda^L, \cdot)$ est le caractère pondéré défini par rapport à la donnée centrale $(A_{G,\infty},\mathfrak{a}_G)$ de $\tilde{G}_V$.
\end{proposition-fr}
\begin{proof}
  Reproduisons les arguments dans \cite[pp.207-208]{Ar02}. Bien évidemment, il faut comparer les $(G,M)$-familles définissant les caractères pondérés locaux et globaux. Fixons $P \in \mathcal{P}(M)$. Au vu de la définition de $\mathring{f}^1$, il suffit de regarder leurs actions sur l'espace
  $$ \mathcal{I}_{\tilde{P}}(\sigma_\lambda) \otimes \bigotimes_{v \notin V} (\text{ le vecteur sphérique en } v ). $$

  On définit une $(G.M)$-famille
  $$ r_Q(\Lambda, c_\lambda, \tilde{P}) := r_{Q|P}(c_\lambda)^{-1} r_{Q|P}(c_{\lambda+\Lambda}), \quad Q \in \mathcal{P}(M), \Lambda \in i(\mathfrak{a}^G_M)^*. $$

  D'après la construction des facteurs normalisants non ramifiés, l'opérateur $\mathcal{J}_Q(\Lambda, \mathring{\sigma}_\lambda, \tilde{P})$ défini dans \eqref{eqn:J_Q-global} agit sur le produit tensoriel des vecteurs sphériques en $v \notin V$ par le scalaire $r_Q(\Lambda, c_\lambda, \tilde{P})$. Il en résulte que
  $$ \mathcal{J}_Q(\Lambda, \mathring{\sigma}_\lambda, \tilde{P}) = r_Q(\Lambda, c_\lambda, \tilde{P}) \mu_Q(\Lambda, \sigma_\lambda, \tilde{P})^{-1} \mathcal{M}_Q(\Lambda, \sigma_\lambda, \tilde{P}), $$
  soit encore
  $$ \left( r_Q(\Lambda, c_\lambda, \tilde{P}) \mu_Q(\Lambda, c_\lambda, \tilde{P}) \right) \mathcal{M}_Q(\Lambda, \sigma_\lambda, \tilde{P}) $$
  d'après l'équation fonctionnelle des opérateurs d'entrelacement globaux, où
  $$ \mu_Q(\Lambda, c_\lambda, \tilde{P}) := \prod_{v \notin V}\mu_{Q|P}(c_{v,\lambda})^{-1} \mu_{Q|P}\left( c_{v,\lambda+\frac{\Lambda}{2}} \right). $$
  On a déjà observé la convergence absolue de ce produit infini dans \eqref{eqn:mu-global}.

  Notons
  $$ \rho_Q(\Lambda, c_\lambda, \tilde{P}) := r_Q(\Lambda, c_\lambda, \tilde{P}) \mu_Q(\Lambda, c_\lambda, \tilde{P}),$$
  qui est encore une $(G,M)$-famille. Ici encore, les $(G,M)$-familles scalaires en vue sont toutes radicielles au sens de \cite[\S 7]{Ar82-Eis2}. Soit $L \in \mathcal{L}(M)$, on choisit $Q_L \in \mathcal{P}(L)$ pour définir les $(L,M)$-familles $r^{Q_L}_R(\cdots)$, etc. D'après \cite[Corollary 7.4]{Ar82-Eis2}, $r^{Q_L}_M(\cdots)$ ne dépend pas du choix de $Q_L$ donc c'est loisible de les noter $r^L_M(\cdots)$, etc.

  Appliquons maintenant la formule de descente \cite[Lemme 4.2.7]{Li14a} à la $(L,M)$-famille déduite de $\rho_Q$, ce qui donne
  $$ \rho^L_M(c_\lambda, \tilde{P}) = \sum_{L_1, L_2 \in \mathcal{L}^L(M)} d^L_M(L_1,L_2) r^{L_1}_M(c_\lambda,\tilde{P}) \mu^{L_2}_M(c_\lambda, \tilde{P}). $$

  D'autre part, comme dans la démonstration du Lemme \ref{prop:mu-nu} on définit une $(G.M)$-famille radicielle  $\nu_Q(\Lambda, c_\lambda, \tilde{P})$ vérifiant
  $$ r_Q(\Lambda, c_\lambda) = \nu_Q(\Lambda, c_\lambda, \tilde{P}) r_Q(\Lambda, c_\lambda, \tilde{P}). $$
  Alors la formule de descente donne
  $$ r^L_M(c_\lambda) = \sum_{L_1, L_2 \in \mathcal{L}^L(M)} d^L_M(L_1,L_2) r_M^{L_1}(c_\lambda, \tilde{P}) \nu^{L_2}_M(c_\lambda, \tilde{P}). $$

  Une variante du Lemme \ref{prop:mu-nu} dit que $\nu^{L_2}_M(c_\lambda,\tilde{P})=1$ si $L_2=M$, sinon il vaut zéro. D'où $\rho^L_M(c_\lambda, \tilde{P}) = r^L_M(c_\lambda)$. En effet, la démonstration s'y adapte sans modifications. Appliquons la version \cite[Corollaire 4.2.5]{Li14a} de la formule de descente au produit des $(G,M)$-familles $\rho_Q(\Lambda, c_\lambda, \tilde{P}) \mathcal{M}_Q(\Lambda, \sigma_\lambda, \tilde{P})$, alors des arguments standards (cf. la preuve de \cite[(7.8)]{Ar81}) entraînent
  \begin{align*}
    J_{\tilde{M}}(\mathring{\sigma}_\lambda, \mathring{f}^1) &= \sum_{L \in \mathcal{L}(M)} \rho^L_M(c_\lambda, \tilde{P}) J_{\tilde{L}}(\sigma_\lambda^L, f^1) \\
    & = \sum_{L \in \mathcal{L}(M)} r^L_M(c_\lambda) J_{\tilde{L}}(\sigma_\lambda^L, f^1).
  \end{align*}

  Cela permet de conclure.
\end{proof}

\begin{theoreme}\label{prop:dev-spec-nouveau}
  Soit $t \geq 0$. Pour tout $f^1 \in \mathcal{H}_{\asp}(\tilde{G}_V, A_{G,\infty})$, on a
  $$ J_t(f^1) = \sum_{L \in \mathcal{L}(M_0)} \frac{|W^L_0|}{|W^G_0|} \int_{\Pi_{t,-}(\tilde{L}^1, V)} a^{\tilde{L}}(\pi) J_{\tilde{L}}(\pi, 0, f^1) \dd\pi, $$
  où $J_{\tilde{L}}(\pi, 0, \cdot)$ est le coefficient de Fourier en $X=0$ du caractère pondéré de $\pi$ défini par rapport à la donnée centrale $(A_{G,\infty},\mathfrak{a}_G)$ de $\tilde{G}_V$.
\end{theoreme}
\begin{proof}
  Notons tout d'abord que l'application $h: \pi \mapsto J_{\tilde{L}}(\pi,0,f^1)$ vérifie les conditions de la Définition \ref{def:coeff-spec} avec $L$ au lieu de $G$. La condition concernant le support de $h$ est évidente. Quant à la décroissance rapide de $\lambda \mapsto J_{\tilde{L}}(\sigma^L_\lambda,0,f^1)$, c'est contenu dans le sens facile du Théorème \ref{prop:phi-M} (voir aussi la Définition \ref{def:PW-equiv} de l'espace de Paley-Wiener), ce qui ne dépend pas de l'Hypothèse \ref{hyp:PW}. Ceci justifie l'intégrale dans l'assertion.

  Rappelons le Théorème \ref{prop:dev-spec-fin} qui fournit le développement pour $J_t(f^1)$
  $$ \sum_{M \in \mathcal{L}(M_0)} \frac{|W^M_0|}{|W^G_0|} \sum_{\mathring{\sigma} \in \Pi_{\mathrm{disc},t,-}(\tilde{M}^1)} \; \int_{i(\mathfrak{a}^G_M)^*} a^{\tilde{M}}_{\mathrm{disc}}(\mathring{\sigma}_\lambda) J_{\tilde{M}}(\mathring{\sigma}_\lambda, \mathring{f}^1) \dd\lambda $$
  où $\mathring{f}^1 := f f_{K^V}$. À cause de la présence de $f_{K^V}$, on peut se limiter aux représentations $\mathring{\sigma}$ de la forme
  $$ \mathring{\sigma} = \sigma \otimes c, \quad \sigma \in \Pi_{\text{disc},t,-}(\tilde{M}^1,V), c \in \mathcal{C}^V_{\text{disc},-}(\tilde{M}). $$

  Appliquons la Proposition \ref{prop:caractere-pondere-local-global} aux $J_{\tilde{M}}(\mathring{\sigma}_\lambda, \mathring{f}^1)$. On obtient donc
  \begin{multline*}
    \sum_{L \in \mathcal{L}(M_0)} \frac{|W^L_0|}{|W^G_0|} \sum_{M \in \mathcal{L}^L(M_0)} \frac{|W^M_0|}{|W^L_0|} \\
    \sum_{\sigma \in \Pi_{\text{disc},t,-}(\tilde{M}^1,V)} \; \sum_{c \in \mathcal{C}^V_{\text{disc},-}(\tilde{M})} \; \int_{i(\mathfrak{a}^G_M)^*} a^{\tilde{M}}_{\text{disc}}(\sigma_\lambda \otimes c_\lambda) r^L_M(c_\lambda) J_{\tilde{L}}(\sigma_\lambda^L, f^1) \dd\lambda .
  \end{multline*}

  La somme portant sur $c$ est finie pour $\sigma$ fixé. On observe que $a^{\tilde{M}}_\text{disc}(\sigma_\lambda \otimes c_\lambda)$ et $r^L_M(c_\lambda)$ sont invariants par la translation $\lambda \mapsto \lambda+\Lambda$ où $\Lambda \in i(\mathfrak{a}^G_L)^*$, cf. \eqref{eqn:coeff-spec-translation}. L'intégrale sur $i(\mathfrak{a}^G_M)^*$ se décompose en celle sur $(\lambda,\Lambda) \in i(\mathfrak{a}^L_M)^* \oplus i(\mathfrak{a}^G_L)^*$. On arrive à
  \begin{multline*}
    \sum_{L \in \mathcal{L}(M_0)} \frac{|W^L_0|}{|W^G_0|} \sum_{M \in \mathcal{L}^L(M_0)} \frac{|W^M_0|}{|W^L_0|} \sum_{\sigma \in \Pi_{\text{disc},t,-}(\tilde{M}^1,V)} \\
    \int_{\lambda \in i(\mathfrak{a}^L_M)^*} \left( \sum_{c \in \mathcal{C}^V_{\text{disc},-}(\tilde{M})} a^{\tilde{M}}_{\text{disc}}(\sigma_\lambda \otimes c_\lambda) r^L_M(c_\lambda) \right) \int_{\Lambda \in i(\mathfrak{a}^G_L)^*} J_{\tilde{L}}(\sigma^L_{\lambda+\Lambda}, f^1) \dd\Lambda \dd\lambda .
  \end{multline*}

  L'intégrale en $\Lambda$ donne $J_{\tilde{L}}(\pi,0,f^1)$. Pour conclure, il reste à rappeler la Définition \ref{def:coeff-spec} de la mesure sur $\Pi_{t,-}(\tilde{L}^1,V)$.
\end{proof}

\section{La formule des traces invariante}\label{sec:I}
Dans cette section, on conserve les mêmes conventions que dans \S\ref{sec:developpements} sur $\tilde{G}$, $\tilde{K}$, $M_0$, $K$ et les mesures. De tels choix permettent de bien définir la distribution dans la formule des traces grossière pour $\tilde{G}$, notée $J: \mathcal{H}_{\asp}(\tilde{G}, A_{G,\infty}) \to \C$. On fixe un ensemble fini de places $V \supset V_\text{ram}$ et on suppose vérifiée l'Hypothèse \ref{hyp:PW}, ce qui permet d'utiliser les résultats dans \S\ref{sec:dists-locales}.

Comme expliqué dans \S\ref{sec:intro}, nous n'utilisons pas les données $(Z,\zeta)$ de \cite[\S 1]{Ar02}. Une variante de la formule des traces à la \cite{Ar88-TF2} sera donnée dans \S\ref{sec:id-supp}.

\subsection{Distributions invariantes globales}
Rappelons que la distribution $J$ admet une décomposition $J = \sum_{t \geq 0} J_t$ selon la norme de caractères infinitésimaux des données automorphes cuspidales.

Rappelons aussi que l'espace $\mathcal{H}_{\asp}(\tilde{G}_V, A_{G,\infty})$ se plonge dans $\mathcal{H}_{\asp}(\tilde{G}, A_{G,\infty})$ par
$$ f^1 \longmapsto \mathring{f}^1 := f^1 f_{K^V} $$
où $f_{K^V}$ est l'unité de l'algèbre de Hecke sphérique anti-spécifique hors de $V$. On en déduit une distribution $J: \mathcal{H}_{\asp}(\tilde{G}_V, A_{G,\infty}) \to \C$, donnée par $J(f^1) := J(\mathring{f}^1)$.

L'énoncé suivant donne la formule des traces invariante que l'on cherche.

\begin{theoreme}[Cf. {\cite[Proposition 2.2 et Proposition 3.3]{Ar02}}]\label{prop:I}
  Pour tous $L \in \mathcal{L}(M_0)$ et $t \geq 0$, il existe des distributions
  $$ I^{\tilde{L}}, I^{\tilde{L}}_t: \mathcal{H}_{\asp}(\tilde{L}_V, A_{L,\infty}) \to \C $$
  qui sont invariantes et supportées par $I\mathcal{H}_{\asp}(\tilde{L}_V, A_{L,\infty})$, telles que si l'on note $I := I^{\tilde{G}}$, $I_t := \tilde{I}^{\tilde{G}}_t$, alors
  \begin{align}
    \label{eqn:I}
    I(f^1) & = J(f^1) - \sum_{\substack{L \in \mathcal{L}^G(M_0) \\ L \neq G}} \frac{|W^L_0|}{|W^G_0|} I^{\tilde{L}}(\phi^1_{\tilde{L}}(f^1)), \\
    \label{eqn:I_t}
    I_t(f^1) & = J_t(f^1) - \sum_{\substack{L \in \mathcal{L}^G(M_0) \\ L \neq G}} \frac{|W^L_0|}{|W^G_0|} I^{\tilde{L}}_t(\phi^1_{\tilde{L}}(f^1)), \\
    \label{eqn:I-I_t}
    I(f^1) & = \sum_{t \geq 0} I_t(f^1), \\
    \label{eqn:I-geom}
    I(f^1) & = \sum_{M \in \mathcal{L}^G(M_0)} \frac{|W^M_0|}{|W^G_0|} \sum_{\tilde{\gamma} \in \Gamma(\tilde{M}^1, V)} a^{\tilde{M}}(\tilde{\gamma}) I_{\tilde{M}}(\tilde{\gamma}, f^1), \\
    \label{eqn:I-spec}
    I_t(f^1) & = \sum_{M \in \mathcal{L}^G(M_0)} \frac{|W^M_0|}{|W^G_0|} \int_{\Pi_{t,-}(\tilde{M}^1,V)} a^{\tilde{M}}(\pi) I_{\tilde{M}}(\pi,0,f^1) \dd\pi
  \end{align}
  pour tout $f^1 \in \mathcal{H}_{\asp}(\tilde{G}_V, A_{G,\infty})$, où $I_{\tilde{M}}(\tilde{\gamma},\cdot)$ et $I_{\tilde{M}}(\pi,\cdot)$ sont les distributions invariantes introduites dans \S\ref{sec:dists-locales} définies par rapport à la donnée centrale $(A_{G,\infty}, \mathfrak{a}_G)$ de $\tilde{G}_V$. Toutes les intégrales sont absolument convergentes.

  Les mêmes identités sont satisfaites si l'on remplace $G$ par un élément quelconque de $\mathcal{L}^G(M_0)$.
\end{theoreme}
Évidemment, lesdites identités déterminent les distributions $I^{\tilde{L}}$ et $I^{\tilde{L}}_t$. La définition de $\phi^1_{\tilde{L}}$ se trouve dans \S\ref{sec:phi^1}.

\begin{proof}
  Lorsque $G$ est anisotrope modulo le centre, on prend tout simplement $I := J$ et $I_t := J_t$. Raisonnons par récurrence et regardons les identités \eqref{eqn:I} et \eqref{eqn:I_t} comme les définitions de $I$ et $I_t$. Il faut vérifier que les autres identités sont satisfaites et que $I$, $I_t$ sont invariantes et supportées par $I\mathcal{H}_{\asp}(\tilde{G}_V, A_{G,\infty})$.

  D'après l'hypothèse de récurrence et le fait $J = \sum_t J_t$, \eqref{eqn:I-I_t} est satisfaite.

  Montrons \eqref{eqn:I-geom}. On a
  \begin{multline*}
    I(f^1) := J(f^1) - \sum_{\substack{L \in \mathcal{L}(M_0) \\ L \neq G}} \frac{|W^L_0|}{|W^G_0|} I^L(\phi^1_{\tilde{L}}(f^1)) = \sum_{M \in \mathcal{L}(M_0)} \frac{|W^M_0|}{|W^L_0|} \sum_{\tilde{\gamma} \in \Gamma(\tilde{M}^1, V)} a^{\tilde{M}}(\tilde{\gamma}) J_{\tilde{M}}(\tilde{\gamma}, f^1) \\
    - \sum_{\substack{L \in \mathcal{L}(M_0) \\ L \neq G}} \frac{|W^L_0|}{|W^G_0|} \sum_{M \in \mathcal{L}^L(M_0)} \frac{|W^M_0|}{|W^L_0|} \sum_{\tilde{\gamma} \in \Gamma(\tilde{M}^1, V)} a^{\tilde{M}}(\tilde{\gamma}) I^{\tilde{L}}_{\tilde{M}}(\tilde{\gamma}, \phi^1_{\tilde{L}}(f^1)) \\
    = \sum_{M \in \mathcal{L}(M_0)} \frac{|W^M_0|}{|W^G_0|} \sum_{\tilde{\gamma} \in \Gamma(\tilde{M}^1, V)} a^{\tilde{M}}(\tilde{\gamma}) \left( J_{\tilde{M}}(\tilde{\gamma}, f^1) - \sum_{\substack{L \in \mathcal{L}(M) \\ L \neq G}} I^{\tilde{L}}_{\tilde{M}}(\tilde{\gamma}, \phi^1_{\tilde{L}}(f^1)) \right) \\
    = \sum_{M \in \mathcal{L}(M_0)} \frac{|W^M_0|}{|W^G_0|} \sum_{\tilde{\gamma} \in \Gamma(\tilde{M}^1, V)} a^{\tilde{M}}(\tilde{\gamma}) I_{\tilde{M}}(\tilde{\gamma}, f^1),
  \end{multline*}
  où on a utilisé le Théorème \ref{prop:dev-geom-nouveau}, l'équation \eqref{eqn:I-geom} pour $L$ et la Proposition \ref{prop:I-phi^1}. La convergence absolue provient du Théorème \ref{prop:dev-geom-nouveau}.

  On établit \eqref{eqn:I-spec} de la même façon, en utilisant le Théorème \ref{prop:dev-spec-nouveau} et l'équation \eqref{eqn:I-spec} pour le côté spectral.

  Vu les développements \eqref{eqn:I-geom} et \eqref{eqn:I-spec}, les distributions $I$, $I_t$ sont toutes invariantes et supportées par $I\mathcal{H}_{\text{ac}}(\tilde{G}_V, A_{G,\infty})$ car $I_{\tilde{M}}(\tilde{\gamma},\cdot)$ et $I_{\tilde{M}}(\pi,0,\cdot)$ le sont d'après les résultats de \S\ref{sec:dist-locale-preuve}. Cela achève la démonstration.
\end{proof}

\subsection{Identités supplémentaires}\label{sec:id-supp}
Dans cette sous-section, on donnera des variantes des développements \eqref{eqn:I-geom} et \eqref{eqn:I-spec}. Ils réconcilient des énoncés différents de la formule des traces invariante.

\begin{theoreme}\label{prop:I-1}
  Fixons $(A,\mathfrak{a})$ une donnée centrale de $\tilde{G}_V$. Soient $f^A \in \mathcal{H}_{\asp}(\tilde{G}_V,A)$ et $f^1 \in \mathcal{H}_{\asp}(\tilde{G}_V, A_{G,\infty})$ la fonction déduite de $f^A$ par la Proposition \ref{prop:f-moyenne}. On regarde $I$ et $I_t$ comme des formes linéaires sur $\mathcal{H}_{\asp}(\tilde{G}_V, A)$ en posant $I(f^A) = I(f^1)$, $I_t(f^A)=I_t(f^1)$, pour tout $t \geq 0$. Alors on a
  \begin{align*}
    I(f^A) & = \sum_{t \geq 0} I_t(f^A), \\
    I(f^A) & = \sum_{M \in \mathcal{L}^G(M_0)} \frac{|W^M_0|}{|W^G_0|} \sum_{\tilde{\gamma} \in \Gamma(\tilde{M}^1, V)} a^{\tilde{M}}(\tilde{\gamma}) \int_{z \in A_{G,\infty}/A} I_{\tilde{M}}(z\tilde{\gamma}, f^A) \dd z, \\
    I_t(f^A) & = \sum_{M \in \mathcal{L}^G(M_0)} \frac{|W^M_0|}{|W^G_0|} \int_{\Pi_{t,-}(\tilde{M}^1,V)} a^{\tilde{M}}(\pi) \int_{\mathfrak{a}_G/\mathfrak{a}}  I_{\tilde{M}}(\pi,X,f^A) \dd X \dd\pi ,
  \end{align*}
  où les distributions locales $I_{\tilde{M}}(\cdots)$ sont définies par rapport à $(A,\mathfrak{a})$.
\end{theoreme}
En pratique, on l'applique souvent au cas $A=\{1\}$ pour se ramener aux distributions locales définies sans référence à données centrales.
\begin{proof}
  On a toujours $A \subset A_{G,\infty}$. Il suffit donc d'utiliser le Théorème \ref{prop:I} et les propriétés données dans Hypothèses \ref{hyp:recurrence-spec} et \ref{hyp:recurrence-geom} (qui ne sont plus hypothétiques...)
\end{proof}

\begin{lemme}
  Soient $f^1 \in \mathcal{H}_{\asp}(\tilde{G}_V, A_{G,\infty})$ et $f^\dagger \in \mathcal{H}_{\mathrm{ac},\asp}(\tilde{G}_V)$ son image sous l'inclusion naturelle. Alors on a
  \begin{align*}
    I(f^1) & = \sum_{M \in \mathcal{L}(M_0)} \frac{|W^M_0|}{|W^G_0|} \sum_{\tilde{\gamma} \in \Gamma(\tilde{M}^1, V)} a^{\tilde{M}}(\tilde{\gamma}) I_{\tilde{M}}(\tilde{\gamma}, f^\dagger), \\
    I_t(f^1) & = \sum_{M \in \mathcal{L}(M_0)} \frac{|W^M_0|}{|W^G_0|} \int_{\Pi_{t,-}(\tilde{M}^1,V)} a^{\tilde{M}}(\pi) I_{\tilde{M}}(\pi,0,f^\dagger) \dd\pi,
  \end{align*}
  où les distributions $I_{\tilde{M}}(\cdots)$ sont définies par rapport à la donnée centrale triviale $(\{1\},\{0\})$, dont l'évaluation en $f^\dagger$ est loisible car elles sont toutes concentrées en $0$.
\end{lemme}
\begin{proof}
  Comme dans le cas précédant, cela résulte aussi des propriétés données dans Hypothèses \ref{hyp:recurrence-spec} et \ref{hyp:recurrence-geom}.
\end{proof}

Avant d'entamer le résultat suivant, rappelons que $\tilde{G}_V = \tilde{G}^1_V \times A_{G,\infty}$. Donc le produit tensoriel algébrique $\mathcal{H}_{\asp}(\tilde{G}^1_V) \otimes C^\infty_c(A_{G,\infty})$ se plonge dans de $\mathcal{H}_{\asp}(\tilde{G}_V)$.

\begin{theoreme}\label{prop:I-2}
  Soient $f^1 \in \mathcal{H}_{\asp}(\tilde{G}_V, A_{G,\infty})$ et $f^\star \in \mathcal{H}_{\asp}(\tilde{G}^1_V) \otimes C^\infty_c(A_{G,\infty}) \subset \mathcal{H}_{\asp}(\tilde{G}_V)$ tels que
  $$ f^\star|_{\tilde{G}^1_V} = f^1|_{\tilde{G}^1_V}. $$
  Alors on a
  \begin{align*}
    I(f^\star) := I(f^1) & = \sum_{M \in \mathcal{L}(M_0)} \frac{|W^M_0|}{|W^G_0|} \sum_{\tilde{\gamma} \in \Gamma(\tilde{M}^1, V)} a^{\tilde{M}}(\tilde{\gamma}) I_{\tilde{M}}(\tilde{\gamma}, f^\star), \\
    I_t(f^\star) := I_t(f^1) & = \sum_{M \in \mathcal{L}(M_0)} \frac{|W^M_0|}{|W^G_0|} \int_{\Pi_{t,-}(\tilde{M}^1,V)} a^{\tilde{M}}(\pi) I_{\tilde{M}}(\pi,0,f^\star).
  \end{align*}
\end{theoreme}
\begin{proof}
  On désigne par  $f^\dagger \in \mathcal{H}_{\text{ac},\asp}(\tilde{G}_V)$ l'image de $f^1$ par l'inclusion naturelle. On se ramène aisément au cas $f^\star = (f^\dagger)^b$ où $b \in C^\infty_c(\mathfrak{a}_G)$, $b(0)=1$. Puisque $I_{\tilde{M}}(\tilde{\gamma},\cdot)$ (resp. $I_{\tilde{M}}(\pi,0,\cdot)$) est concentrée en $0$, on a $I_{\tilde{M}}(\tilde{\gamma},f^\star)=I_{\tilde{M}}(\tilde{\gamma},f^\dagger)$ (resp. $I_{\tilde{M}}(\pi,0,f^\star)=I_{\tilde{M}}(\pi,0,f^\dagger)$). On conclut par le Lemme précédent.
\end{proof}

\begin{remarque}
  On revient ainsi au point de vue adopté dans \cite{Ar88-TF2}, pour l'essentiel, de travailler avec les fonctions $f^\star$ qui se restreignent en $f^1$ sur $\tilde{G}^1_V$.
\end{remarque}

\subsection{Formes simples de la formule des traces}

Un élément semi-simple $\delta \in G(F)$ est dit $F$-elliptique si $\mathfrak{a}_{G_\delta} = \mathfrak{a}_G$; la même définition s'adapte immédiatement au cas local, puis au cas des revêtements $\tilde{G}_V$ (on passe à l'image par $\rev$).

\begin{definition-fr}\label{def:cuspidale}
  Soient momentanément $F$ un corps local et $\tilde{G} \to G(F)$ un revêtement local. Une fonction $f \in \mathcal{H}_{\asp}(\tilde{G})$ est dite cuspidale si $\Tr\pi(f)=0$ pour toute induite parabolique propre $\pi$. C'est équivalent à la propriété que $I_{\tilde{G}}(\tilde{\gamma}, f)=0$ pour tout $\tilde{\gamma}$ semi-simple régulier non-elliptique.

  Revenons au cas global. Une fonction test décomposable $f = \prod_{v \in V} f_v \in \mathcal{H}_{\asp}(\tilde{G}_V)$ est dite cuspidale en une place $v$ si $f_v$ l'est.
\end{definition-fr}

\begin{theoreme}[Cf. {\cite[\S 7]{Ar88-TF2}}]\label{prop:formules-simples}
  Soient $f = \prod_{v \in V} f_v \in \mathcal{H}_{\asp}(\tilde{G}_V)$, $f \mapsto f^1 \mapsto \mathring{f}^1 = f^1 f_{K^V} \in \mathcal{H}_{\asp}(\tilde{G})$.
  \begin{enumerate}
    \item Si $f$ est cuspidale en une place, alors
      \begin{gather*}
        I_t(f) = \sum_{\mathring{\pi} \in \Pi_{\mathrm{disc},t,-}(\tilde{G}, A_{G,\infty})} a^{\tilde{G}}_{\mathrm{disc}}(\mathring{\pi}) J_{\tilde{G}}(\mathring{\pi}, \mathring{f}^1)
      \end{gather*}
      pour tout $t \geq 0$.
    \item Si $f$ est cuspidale en une place $v_1$ telle que $\Tr\pi_{v_1} (f_{v_1})=0$ pour toute représentation $\pi$ qui est un constituant de $\mathcal{I}_{\tilde{P}}(\sigma)$, où $P=MU$ est un sous-groupe parabolique propre de $G$ et $\sigma \in \Pi_{\mathrm{unit},-}(\tilde{M}_{v_1})$, alors
      $$ I_t(f) = \Tr(R_{\mathrm{disc},t,-}(\mathring{f}^1)) $$
      où $R_{\mathrm{disc},t,-}$ est la $t$-part du spectre discret spécifique de $\tilde{G}$.
    \item Si $f$ est cuspidale en deux places, alors
      $$ I(f) = \sum_{\tilde{\gamma} \in \Gamma(\tilde{G}^1, V)} a^{\tilde{G}}(\tilde{\gamma}) I_{\tilde{G}}(\tilde{\gamma}, f^1). $$
    \item Supposons que
      \begin{itemize}
        \item $f^1 \in \mathcal{H}_{\mathrm{adm}, \asp}(\tilde{G}_V, A_{G,\infty})$, l'espace défini par \eqref{eqn:H-adm};
        \item $f$ est cuspidale en une place $v_1$;
        \item il existe une autre place $v_2 \in V$ telle que $I_{\tilde{G}}(\tilde{\gamma}_{v_2}, f_{v_2})=0$ sauf si $\tilde{\gamma}_{v_2}$ est semi-simple $F_{v_2}$-elliptique (pas forcément régulier).
      \end{itemize}
      Alors
      $$ I(f^1) = \sum_{\gamma \in G(F)^{\mathrm{bon}}_{\mathrm{ell}}/\mathrm{conj}} \frac{\mes(G_\gamma(F) A_{G,\infty} \backslash G_\gamma(\A))}{[G^\gamma(F) : G_\gamma(F)]} \int_{G_\gamma(\A) \backslash G(\A)} \mathring{f}^1(x^{-1} \gamma x) \dd x $$
      où $G(F)^{\mathrm{bon}}_{\mathrm{ell}}$ signifie l'ensemble des éléments $F$-elliptiques semi-simples dans $G(F)$ qui sont bons en tant qu'éléments de $\tilde{G}$. 
  \end{enumerate}
\end{theoreme}
\begin{proof}
  La preuve est très similaire à celles dans \cite[\S 7]{Ar88-TF2}. Donnons-en une esquisse.

  Montrons la première assertion. Nous allons simplifier le développement spectral \eqref{eqn:I-spec}. Comme dans op.cit, la cuspidalité entraîne que $I_{\tilde{M}}(\pi,X,f)=0$ pour tout $M \neq G$ et tout $(\pi,X)$. Donc $I_{\tilde{M}}(\pi,0,f^1)=0$ d'après l'une des propriétés dans l'Hypothèse \ref{hyp:recurrence-spec}. Seulement les termes avec $M=G$ survivent. Considérons ensuite des représentations de la forme $\pi=\rho^G_\lambda$, où $\rho \in \Pi_{\text{unit},-}(\tilde{L}^1,V)$ et $\lambda \in i(\mathfrak{a}^G_L)^*$, $L \in \mathcal{L}(M_0)$. Appliquons la cuspidalité encore une fois pour obtenir $I_{\tilde{G}}(\rho^G_\lambda, f^1)=0$ si $L \neq G$. L'égalité s'en suit d'après la Définition \ref{def:coeff-spec} et le fait que $J_{\tilde{G}}(\pi \times c, \mathring{f}^1) = I_{\tilde{G}}(\pi, f^1)$ pour tout $c \in \mathcal{C}_-(\tilde{G}^V)$.

  La deuxième assertion résulte rapidement de la première. En effet, regardons \eqref{eqn:coeff-spec-caracterisation}. Dans le côté à gauche, la présence de $f_{v_1}$ annule tous les termes avec $P \neq G$. On retrouve donc la trace de $R_{\text{disc},t,-}$.

  Montrons la troisième assertion. Comme dans \cite[p.539]{Ar88-TF2}, des formules de descente permettent de montrer que $I_{\tilde{M}}(z\tilde{\gamma},f)=0$ si $M \neq G$ et $z \in A_{G,\infty}$. D'où $I_{\tilde{M}}(\tilde{\gamma}, f^1)=0$ si $M \neq G$ d'après l'une des propriétés dans l'Hypothèse \ref{hyp:recurrence-geom}.

  Pour établir la quatrième assertion, on observe tout d'abord que $f$ est cuspidale en $v_1$ et $v_2$. On a donc $I(f) = \sum_{\tilde{\gamma}} a^{\tilde{G}}(\tilde{\gamma}) I_{\tilde{G}}(\tilde{\gamma}, f^1)$. La Remarque \ref{rem:V-S-convention} dit qu'en prenant $V=S$, ce qui est loisible, on a $a^{\tilde{G}}(\tilde{\gamma})=a^{\tilde{G}}_\text{ell}(\tilde{\gamma})$. Grâce au Lemme \ref{prop:a_ell}, on retrouve la partie $I_\text{ell}(f^1)$ définie dans \eqref{eqn:I_ell} du développement géométrique fin. La distribution $I_\text{ell}(f^1)$ est facile à décrire: d'une part, les coefficients $a^{\tilde{G}}(V, \tilde{\gamma})$ se calculent à l'aide de  la Définition \ref{def:coeff-geom-0} et du fait que $\tilde{\gamma}_{v_2}$ est $F_{v_2}$-elliptique et semi-simple; d'autre part, la $(G,V)$-équivalence se réduit à $G(F)$-conjugaison pour les éléments semi-simples. Enfin, la condition d'admissibilité sur le support de $f^1$ entraîne que si $\gamma \in (G(F))^{K,\text{bon}}_{\tilde{G},V}$ (confondu avec un 
représentant admissible), $\gamma \leadsto \tilde{\gamma}
$ 
par \eqref{eqn:leadsto}, alors
  $$ J_{\tilde{G}}(\tilde{\gamma}, f^1) = \int_{G_\gamma(\A) \backslash G(\A)} \mathring{f}^1(x^{-1} \gamma x) \dd x, \quad \text{si $\gamma$ est bon dans $\tilde{G}$ }. $$
  On arrive ainsi à l'égalité cherchée.
\end{proof}

\bibliographystyle{abbrv-fr}
\bibliography{metaplectic}

\begin{flushleft}
  Wen-Wei Li \\
  Institute of Mathematics, \\
  Academy of Mathematics and Systems Science, Chinese Academy of Sciences, \\
  55, Zhongguancun East Road, \\
  100190 Beijing, China. \\
  Adresse électronique: \texttt{wwli@math.ac.cn}
\end{flushleft}

\end{document}